\documentclass[12pt]{extarticle}
\usepackage{amsmath, amsthm, amssymb, mathtools, hyperref, color}
\usepackage[shortlabels]{enumitem}
\usepackage{graphicx}
\usepackage{adjustbox}
\usepackage{xspace}
\usepackage{cleveref}
\usepackage{tikz}
\usepackage{multirow}
\usepackage{booktabs}
\usepackage{array}
\usepackage{tabularx}
\usepackage{fancyvrb}
\usetikzlibrary{arrows,calc,fit,matrix,positioning,cd}
\usepackage{diagbox}
\usepackage{comment}
\usepackage{float}

\definecolor{cb-yellow}{RGB}{221,170,51}
\definecolor{cb-red} {RGB}{187,85,102}
\definecolor{cb-green}{RGB}{17,119,51}

\hypersetup{
    colorlinks=true,
    linkcolor=cb-red,
    filecolor=cb-red,      
    urlcolor=cb-green,
    citecolor=cb-green,
}

\tolerance 10000
\headheight 0in
\headsep 0in
\evensidemargin 0in
\oddsidemargin \evensidemargin
\textwidth 6.5in
\topmargin .25in
\textheight 8.7in

\newtheorem{theorem}{Theorem}
\newtheorem*{theorem*}{Theorem}
\numberwithin{theorem}{section}
\newtheorem{proposition}[theorem]{Proposition}
\newtheorem{lemma}[theorem]{Lemma}
\newtheorem{corollary}[theorem]{Corollary}
\newtheorem{conjecture}[theorem]{Conjecture}
\newtheorem{question}[theorem]{Question}

\theoremstyle{definition}
\newtheorem{definition}[theorem]{Definition}
\newtheorem{remark}[theorem]{Remark}
\newtheorem{example}[theorem]{Example}

\newcommand{\RR}{\mathbb{R}}

\newcommand{\ZZ}{\mathbb{Z}}
\newcommand{\NN}{\mathbb{N}}
\newcommand{\PP}{\mathbb{P}}
\newcommand{\CC}{\mathbb{C}}

\newcommand{\MM}{\mathcal{M}}
\newcommand{\BB}{\mathcal{B}}
\newcommand{\HH}{\mathcal{H}}
\newcommand{\FF}{\mathcal{F}}

\newcolumntype{P}[1]{>{\centering\arraybackslash}p{#1}}
\newcolumntype{C}[1]{>{\centering\arraybackslash}p{#1}}

\usepackage{etoolbox}
\patchcmd{\thebibliography}
  {\settowidth}
  {\setlength{\itemsep}{0pt plus 0.1pt}\settowidth}
  {}{}
\apptocmd{\thebibliography}
  {\small}
  {}{}

\date{}

\title{\textbf{One-dimensional Discrete Models of Maximum Likelihood Degree One}}
\author{Carlos Am\'endola \and Viet Duc Nguyen \and Janike Oldekop}

\begin{document}
\maketitle

\begin{abstract}
We settle a conjecture by Bik and Marigliano stating that the degree of a one-dimensional discrete model with rational maximum likelihood estimator is bounded above by a linear function in the size of its support, 
therefore showing that there are only finitely many fundamental such models for any given number of states. We study these models from a combinatorial perspective with regard to their existence and enumeration. In particular, sharp models, those whose degree attains the maximal bound, enjoy special properties and have been studied as monomial maps between complex unit spheres. In this way, we present a novel link between Cauchy-Riemann geometry and algebraic statistics.
\end{abstract}

\section{From CR Geometry to Algebraic Statistics}

\emph{Cauchy-Riemann or complex-real (CR) geometry} is known as an important branch of complex analysis which arises when studying real submanifolds of complex manifolds~\cite{SeveralComplexVariablesAndTheGeometryOfRealHypersurfaces,ComplexAnalysisAndCRGeometry}. Through its intersections with differential geometry and algebra, it has significant applications in theoretical physics, especially in spacetime geometry, particle physics and string theory. In this article, we build a bridge to \emph{algebraic statistics} \cite{AlgebraicStatistics} via rational mappings between unit spheres. 
The results are valuable to both algebraic statisticians and complex geometers. Another link between rational sphere maps and compressed sensing was proposed in~\cite{RationalSphereMapsLinearProgrammingAndCompressedSensing}.

The fundamental problem in complex analysis we refer to in this article is the study of proper holomorphic functions. Let $\mathbb{B}_N$ be the unit ball in $\CC^N$. Recall that a holomorphic map $F:\mathbb{B}_N \to\mathbb{B}_{n+1}$ is proper if the preimage of every compact set in $\mathbb{B}_{n+1}$ is compact in $\mathbb{B}_N$. 
If $F$ extends continuously to the boundaries,  then $F$ is proper if it maps the boundary of $\mathbb{B}_N$ to the boundary of $\mathbb{B}_{n+1}$. Therefore, we consider mappings $F:\mathbb{B}_N \to\mathbb{B}_{n+1}$ that map the unit sphere in $\CC^N$ to the unit sphere in $\CC^{n+1}$. A widespread interest within CR geometry concerns the complexity of such mappings. A major open problem is to determine, for fixed $N$ and $n$, an upper bound on the degree of $F$. The special case of monomial mappings has been summarized in detail in \cite[Chapter~4]{RationalSphereMaps}. If $F$ is a proper monomial map whose components are the monomials $c_\alpha z^\alpha$, then $\Vert F(z)\Vert^2=1$ whenever $\Vert z\Vert^2=1$, where $\Vert\cdot\Vert$ is the standard Euclidean norm on $\CC^{n+1}$ and $\CC^N$, respectively. By replacing $\vert z_i\vert^2$ with real variables $x_i$, $\Vert F(z)\Vert^2$ becomes a real polynomial in $x_1,\ldots, x_N$ of the same degree as $F$ and $n+1$ non-negative coefficients. Analogously, $\Vert z\Vert^2$ becomes $x_1+\ldots+ x_N$. In particular, $\Vert F(z)\Vert^2$ considered as a polynomial is identically one on the hyperplane given by $x_1+\ldots+ x_N=1$, see also \cite[Section 9]{UniquenessOfCertainPolynomialsConstantOnALine}. By \cite[Proposition 1]{PolynomialProperMapsBetweenBalls}, there is a one-to-one correspondence between such polynomials and proper monomial mappings $F:\mathbb{B}_N \to\mathbb{B}_{n+1}$, up to essential equivalence classes. In order to pass to statistics, restrict the image of such a polynomial map to the non-negative orthant, leading to the probability simplex: $$\Delta_{n} \coloneqq \{ p \in \RR^{n+1} \mid p_0+\ldots + p_n = 1 \text{ and } p_i \ge 0 \text{ for all } i \in [n] \},$$ where $n\in \NN$ and $[n] \coloneqq \{ 0, 1, \ldots, n\}$. In a statistical setting, each element $p\in\Delta_n$ describes a probability distribution of a discrete random variable $X$ with $n+1$ possible states, i.e., $\PP(X=i)=p_i$. From this point of view, a \emph{discrete statistical model} $\MM$ can be considered as a subset of $\Delta_n$. Such models are used to analyze discrete data obtained from a random experiment with $n+1$ outcomes, represented as a vector of counts $u\in\NN^{n+1}$, where $u_i$ is the number of observations of the $i$-th outcome. A general goal in statistics is to study the underlying distribution of such data. To determine a distribution that best explains given data, we often perform \emph{maximum likelihood (ML) estimation}. A necessary prerequisite for this method is the choice of a suitable model. For given data $u$ and a chosen model $\MM$, the \emph{ML estimate} is the distribution $\hat p \in \MM$ that maximizes the \emph{likelihood function} over~$\MM$: $$\hat p = \underset{p\in\MM}{\arg \max} \;\; p_0^{u_0} p_1^{u_1}\ldots p_n^{u_n}.$$ In algebraic statistics, we consider discrete models that can be written as the intersection of $\Delta_n$ and an algebraic variety. The \emph{maximum likelihood degree} of such a model is the number of complex critical points of the likelihood function contained in the associated variety for generic data. A special case are models of ML degree one. Their ML estimator is a rational function of the data \cite{DiscreteStatisticalModelsWithRationalMaximumLikelihoodEstimator, VarietiesWithMaximumLikelihoodDegreeOne}. In this article we focus on models that are described by one-dimensional varieties and have ML degree one. A classification of these models was studied by Bik and Marigliano \cite[Proposition 2.2]{ClassifyingOneDimensionalDiscreteModelsWithMaximumLikelihoodDegreeOne}. With the notion of  \emph{fundamental model} and a variety of combinatorial criteria, they proved their key result that $2n-1$ is an upper bound on the degree of the associated varieties whenever $n\le4$ \cite[Theorem 1.2]{ClassifyingOneDimensionalDiscreteModelsWithMaximumLikelihoodDegreeOne}. In this article we prove \cite[Conjecture 8.1]{ClassifyingOneDimensionalDiscreteModelsWithMaximumLikelihoodDegreeOne}, which extends the statement to arbitrary $n$.

\begin{theorem*}
The degree of a model parameterized by $p: [0,1] \to \Delta_n, \, t \mapsto (c_i t^{\nu_i} (1-t)^{\mu_i})_{i=0}^n$ for suitable non-negative exponents $\nu_i$ and $\mu_i$ and positive real scalings $c_i$ is at most $2n-1$.
\end{theorem*}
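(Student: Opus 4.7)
The plan is to reduce the statement to the known degree bound for proper monomial mappings between complex balls, via the dictionary between algebraic statistics and Cauchy--Riemann geometry sketched in the introduction. The novelty of the argument lies in setting up this dictionary precisely; the hard analytic work is then imported from CR geometry.

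The first step is to extract the relevant polynomial identity. Since $p(t)\in\Delta_n$, the coordinates of $p(t)$ sum to one for every $t\in[0,1]$, so
\[
\sum_{i=0}^n c_i\,t^{\nu_i}(1-t)^{\mu_i}\;\equiv\; 1.
\]
Setting $x_1=t$ and $x_2=1-t$, this identity says that the polynomial $P(x_1,x_2):=\sum_i c_i x_1^{\nu_i}x_2^{\mu_i}$ has strictly positive coefficients and equals one identically on the hyperplane $x_1+x_2=1$. Evaluating at $t=0$ and $t=1$ forces $\min_i\nu_i=0=\min_i\mu_i$, so the parameterization admits no common monomial factor; the degree of the Zariski closure of the image in $\mathbb{P}^n$ therefore equals $d:=\max_i(\nu_i+\mu_i)=\deg P$ when the parameterization is birational onto its image, and is strictly smaller otherwise, so in either case bounding $d$ suffices.

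The second step is the translation. By \cite[Proposition~1]{PolynomialProperMapsBetweenBalls}, already quoted in the introduction, the polynomial $P$ arises as $\|F\|^2$ of a proper monomial map $F:\mathbb{B}_2\to\mathbb{B}_{n+1}$ with $\deg F=d$, and conversely. Thus the statistical claim is equivalent to bounding the degree of a proper monomial map between these two specific complex balls. The third step is to invoke the degree theorem of D'Angelo--Kos--Riehl, which asserts that every proper monomial map $F:\mathbb{B}_2\to\mathbb{B}_N$ satisfies $\deg F\leq 2N-3$. Applying this to $N=n+1$ yields $d\leq 2n-1$, as desired.

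The main obstacle is, of course, the CR-geometric degree bound itself, whose proof rests on a delicate combinatorial analysis of positive-coefficient polynomial identities on $x_1+x_2=1$ and is not immediate from the algebraic statistics side. The benefit of the proposed plan is that it converts the Bik--Marigliano conjecture into a statement that has already been established in a seemingly unrelated area, reducing the remaining work to essentially a bookkeeping exercise about how the two notions of \emph{degree} and \emph{support size} match across the two dictionaries. If a self-contained proof within algebraic statistics were desired, one could instead try to adapt the combinatorial criteria for fundamental models used by Bik and Marigliano for $n\leq 4$; this route appears substantially harder and is what motivates routing through CR geometry.
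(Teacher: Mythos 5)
Your proposal is correct and follows essentially the same route as the paper: pass to the polynomial $\sum_i c_i x^{\nu_i} y^{\mu_i}$, which has nonnegative coefficients, at most $n+1$ distinct terms, and equals one identically on the line $x+y=1$, then invoke the D'Angelo--Kos--Riehl sharp degree bound (equivalently, the bound $\deg F \le 2N-3$ for proper monomial maps $\mathbb{B}_2\to\mathbb{B}_{N}$ with $N=n+1$). Note only that the paper regards the published proof of that bound as incomplete and devotes Section~\ref{sec:labeled-newton-diagrams} to a detailed Newton-diagram argument for it, so citing it as a black box is precisely where the substantive work remains.
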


Recall that in CR geometry one studies polynomials whose restriction to the line $x+y=1$ is identically equal to one. The parameterization above naturally gives rise to such a polynomial: identifying $t=x$ and $1-t=y$, the exponent vectors $(\nu_i,\mu_i)$ and coefficients $c_i$ define a bivariate polynomial in $\RR[x,y]$ that is constant on the line $x+y=1$. This correspondence via the above theorem allows us to apply tools from CR geometry to the study of one-dimensional models of ML degree one. The main ingredient is the \emph{Newton diagram} \cite[Definition 2.5]{PolynomialsConstantOnAHyperplaneAndCRMapsOfHyperquadrics} (see Section 3), capturing important information about the model. On the other hand, the algebraic statistics setup offers modern approaches that can be used in CR geometry. We correct an error in \cite[Table 2]{UniquenessOfCertainPolynomialsConstantOnALine} and bring attention to fundamental models, a property that has been so far  ignored in the context of monomial sphere~mappings.

\begin{example}
Let $n=2$ and consider the proper monomial map $F: \mathbb{B}_2\to\mathbb{B}_3, \, (z,w) \mapsto (z^3, \sqrt{3}zw, w^3)$, see \cite[Section 2]{PolynomialProperMapsBetweenBalls}. With $x\coloneqq\vert z \vert^2$ and $y\coloneqq\vert w\vert^2$, we derive $f=x^3+3xy+y^3 \in \RR[x,y]$ when considering $\Vert F(z,w)\Vert^2$. Since $f(x,y)=1$ on the line $x+y=1$, the coefficients and exponent vectors of $f$ define a one-dimensional discrete model of ML degree one given by $$p : [0,1] \to \Delta_2, \qquad t \mapsto (t^3, 3t(1-t), (1-t)^3).$$ 
Indeed, its ML estimator for data $u=(u_0,u_1,u_2)$ is given by the rational function 
$$\hat{p}: \Delta_2 \to \mathcal{M}, \quad u \mapsto p(\hat{t}) \quad \text{ where } \quad \hat{t} = \frac{3u_0+u_1}{3u_0+2u_1+3u_2}.$$
Note that the degree of $F$ and the degree of $\mathcal{M}$ are both equal to $3=2\cdot 2-1$.
\end{example}

The paper is organized as follows. In Section \ref{sec:fundamental-models} we introduce fundamental models and their finiteness. Section \ref{sec:labeled-newton-diagrams} is dedicated to a detailed proof of \cite[Conjecture 8.1]{ClassifyingOneDimensionalDiscreteModelsWithMaximumLikelihoodDegreeOne}. In Section \ref{sec:composition-fundamental-models} we construct fundamental models from those of lower dimension. Sharp models, those whose degree is exactly $2n-1$, are discussed in Section \ref{sec:sharp-models}. Section \ref{sec:enumerating-fundamental-models} studies general combinatorial aspects of fundamental models. The paper concludes with a general discussion of possible generalizations to higher-dimensional models of ML degree one. 

\section{On the Finiteness of Fundamental Models} \label{sec:fundamental-models} 

Every one-dimensional model of ML degree one can be represented as the image of a parameterization map of the following form \cite[Proposition 2.2]{ClassifyingOneDimensionalDiscreteModelsWithMaximumLikelihoodDegreeOne}: $$p: [0,1] \to \Delta_n, \quad t \mapsto (c_i t^{\nu_i} (1-t)^{\mu_i})_{i=0}^n,$$ where $\nu_i$ and $\mu_i$ are non-negative exponents and $c_i$ are positive real coefficients. Therefore, the models of interest in this article are completely determined by a finite sequence $(c_i, \nu_i, \mu_i)_{i=0}^n \in (\mathbb{R}_{>0} \times \mathbb{N} \times \mathbb{N})^n$. Following Bik and Marigliano \cite{ClassifyingOneDimensionalDiscreteModelsWithMaximumLikelihoodDegreeOne} we refer to such one-dimensional models of ML degree one as \emph{rational one-dimensional models}, abbreviated as \emph{R1d models}. We call a rational one-dimensional model represented by $(c_i, \nu_i, \mu_i)_{i=0}^n$ \emph{reduced} if the exponent pairs $(\nu_i,\mu_i)$ are pairwise distinct and different from $(0, 0)$. Our main interest is in fundamental models since these are the building blocks for all rational one-dimensional models \cite[Theorem 2.11]{ClassifyingOneDimensionalDiscreteModelsWithMaximumLikelihoodDegreeOne}.

\begin{definition} \label{def:fundamental-model}
A reduced R1d model represented by $(c_i, \nu_i, \mu_i)_{i=0}^n$ is \emph{fundamental} if, given $(\nu_i, \mu_i )$, the scalings $c_i$ are uniquely determined by the constraint $p_0+p_1+\ldots+p_n =1$.
\end{definition}

The degree of any R1d model $\MM\subseteq\Delta_n$ represented by $(c_i, \nu_i, \mu_i)_{i=0}^n$ is $$\deg(\MM) \coloneqq \max \{ \nu_i + \mu_i \mid i \in [n] \}.$$ The following result generalizes \cite[Theorem 1.2]{ClassifyingOneDimensionalDiscreteModelsWithMaximumLikelihoodDegreeOne} to arbitrary $n$.

\begin{theorem} \label{thm:degree-upper-bound}
Let $\MM \subseteq \Delta_n$ be a one-dimensional model of ML degree one. Then $$\deg (\MM) \le 2n-1.$$
\end{theorem}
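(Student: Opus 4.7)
The plan is to leverage the CR geometry correspondence highlighted in the introduction. The parameterization of $\MM$ is equivalent to a bivariate polynomial
\[ f(x,y) \;=\; \sum_{i=0}^n c_i\, x^{\nu_i} y^{\mu_i}, \qquad c_i > 0, \]
with pairwise distinct exponent pairs satisfying $f(t,1-t) \equiv 1$. Since $\deg \MM = d \coloneqq \max_i (\nu_i+\mu_i) = \deg f$, the theorem reduces to a purely combinatorial-analytic statement: any positive-coefficient bivariate polynomial with $n+1$ terms that is identically $1$ on the line $x+y=1$ has degree at most $2n-1$.

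Expanding $f(t,1-t) = 1$ in powers of $t$ produces the linear system
\[ \sum_{i=0}^n c_i\,(-1)^{k-\nu_i}\binom{\mu_i}{k-\nu_i} \;=\; \delta_{k,0}, \qquad k = 0, 1, \dots, d, \]
with the convention that the binomial vanishes unless $\nu_i \le k \le \nu_i+\mu_i$. I would record this system on the \emph{labeled Newton diagram} $\Gamma(f)$, i.e.\ the convex hull of the exponent pairs enriched by the coefficients $c_i$, because the sum at each level $k$ decomposes along the antidiagonals $\{\nu+\mu=\ell\}$ for $\ell\ge k$ in a way that is transparent on the diagram. The crucial feature to exploit throughout is strict positivity of every $c_i$.

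The core of the argument is a propagation principle along the antidiagonals. The top equation $k=d$ reads $\sum_{\nu_i+\mu_i=d} c_i (-1)^{\mu_i} = 0$, so positivity forces at least two exponent pairs on the top antidiagonal, of opposite $\mu$-parity. The equation at $k=d-1$ then couples levels $d$ and $d-1$: if no exponent lies at level $d-1$, the weighted cancellation $\sum_{\nu_i+\mu_i=d} c_i (-1)^{\mu_i}\mu_i = 0$ becomes an \emph{additional} constraint on the level-$d$ coefficients. Iterating this analysis level by level on the labeled Newton diagram, each new vanishing equation either introduces a fresh exponent pair at a yet-unpopulated antidiagonal, or imposes a further linear relation among the coefficients on the higher levels. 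A bookkeeping argument, weighing the number of new exponents needed against the number of new independent relations, yields the combinatorial inequality $n+1 \ge \lceil (d+1)/2\rceil + 1$, equivalent to $d\le 2n-1$. Sharpness is witnessed by the sphere-map families generalizing the example $x^3+3xy+y^3$ from the introduction.

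The main obstacle will be making the propagation rigorous across all possible shapes of $\Gamma(f)$. In particular, intermediate antidiagonal levels can legally be skipped whenever the relevant binomial combinations among the higher levels already conspire to vanish, so one must track precisely how many such free cancellations are available before positivity fails. The labeled Newton diagram is engineered exactly to make this accounting possible: the signs $(-1)^{k-\nu_i}$ and binomial weights $\binom{\mu_i}{k-\nu_i}$ attached to each lattice point at each level turn the positivity obstruction into a uniform combinatorial statement about the diagram.
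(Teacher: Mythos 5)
Your reduction to the polynomial statement is the same one the paper uses (modulo the easy passage to reduced models via \cite[Remark 2.5]{ClassifyingOneDimensionalDiscreteModelsWithMaximumLikelihoodDegreeOne}), and your target inequality $n+1 \ge \lceil (d+1)/2\rceil + 1$ is exactly the paper's count $n+1 \ge 2 + \lceil (d-1)/2\rceil$. The first two steps of your propagation are also correct: the coefficient of $t^d$ forces two points of opposite $\mu$-parity on the top antidiagonal, and the coefficient of $t^{d-1}$ either forces a support point at level $d-1$ or imposes the extra relation $\sum_{\nu_i+\mu_i=d} c_i(-1)^{\mu_i}\mu_i = 0$ on the top level.

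The gap is that the ``bookkeeping argument'' you then invoke is the entire content of the theorem, and the dichotomy you propose --- each level either contributes a fresh exponent pair or adds a further linear relation ``until positivity fails'' --- does not close on its own. The extremal sharp models, such as the family in \eqref{eq:example-sharp-model}, genuinely skip roughly every other antidiagonal and satisfy all the intermediate vanishing equations with strictly positive coefficients; so accumulating extra relations is not by itself a contradiction, and you must prove that positivity tolerates at most about $(d-1)/2$ such free cancellations. The relations at different levels carry binomial weights $\binom{\mu_i}{k-\nu_i}$ and are far from being in general position, so no naive rank or parity count suffices; this is precisely the difficulty that makes the D'Angelo--Kos--Riehl theorem hard. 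The paper resolves it not on the antidiagonals of $f$ but through the factorization $f_\MM-1=(x+y-1)g_\MM$: the sign pattern of $g_\MM$ has a unique source at the origin (Lemma \ref{lemma:unique-source}), every sink of that pattern certifies a positive coefficient, hence a support point, of $f_\MM$ (Lemma \ref{lemma:sink-positive-coefficient} and Corollary \ref{cor:support-size-sinks-bound}), and an extensive case analysis (Proposition \ref{prop:sinks-lower-bound}) shows there are at least $2+\lceil\frac{d-1}{2}\rceil$ sinks. Some certificate of this kind --- a mechanism converting the positivity of the $c_i$ into a guaranteed number of distinct support points --- is what your proposal still lacks; as written, the level-by-level accounting is a plan rather than a proof.
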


\begin{proof}[Proof idea]
By \cite[Remark 2.5]{ClassifyingOneDimensionalDiscreteModelsWithMaximumLikelihoodDegreeOne}, for any R1d model $\MM \subseteq \Delta_n$ there exists a reduced model of degree $\deg(\MM)$ in $\Delta_m$ for some $m\leq n$. Therefore, it suffices to prove the upper bound for reduced models. Assume that $\MM$ is reduced and represented by the sequence $(c_i, \nu_i, \mu_i)_{i=0}^n$. 
The degree of $\MM$ corresponds to the total degree of the polynomial $$f \coloneqq c_0 x^{\nu_0} y^{\mu_0} + c_1 x^{\nu_1} y^{\mu_1} + \ldots + c_n x^{\nu_n} y^{\mu_n} \in \RR[x,y].$$ Since $\MM$ is reduced, $f$ has $n+1$ distinct monomials, and each coefficient is non-negative. Furthermore, we have $f(x,y)=1$ on the line $x+y=1$ since this is equivalent to ${f(t,1-t)=1}$. Applying \cite[Theorem 1.1]{ASharpBoundForTheDegreeOfProperMonomialMappingsBetweenBalls} yields the upper bound: $\deg (\MM) \le 2n-1$. 
\end{proof}

We explain the background of \cite[Theorem 1.1]{ASharpBoundForTheDegreeOfProperMonomialMappingsBetweenBalls} and all details in the following section. If $\MM$ is a R1d model represented by $(c_i, \nu_i, \mu_i)_{i=0}^n$, the \emph{support} of $\MM$ is the set of all pairs $(\nu_i, \mu_i)$. Theorem \ref{thm:degree-upper-bound} immediately reveals the finiteness of fundamental models.

\begin{corollary} \label{cor:finiteness-fundamental-models}
For any $n\in \NN$, the number of fundamental models in the probability simplex $\Delta_n$ is finite.
\end{corollary}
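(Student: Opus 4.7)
The plan is to reduce the problem to a finite counting argument using \Cref{thm:degree-upper-bound}. By the very definition of a fundamental model, the positive scalings $c_i$ are uniquely determined by the exponent data $(\nu_i,\mu_i)_{i=0}^n$. Hence a fundamental model in $\Delta_n$ is completely specified by its support, i.e.\ by an $(n+1)$-element subset of $\NN^2 \setminus \{(0,0)\}$ consisting of the pairs $(\nu_i,\mu_i)$. It therefore suffices to show that only finitely many such supports can actually arise from a fundamental model in $\Delta_n$.

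Next I would invoke \Cref{thm:degree-upper-bound}: every R1d model $\MM \subseteq \Delta_n$ satisfies $\deg(\MM) \le 2n-1$, where $\deg(\MM) = \max_i (\nu_i + \mu_i)$. Consequently every exponent pair $(\nu_i,\mu_i)$ belonging to the support of a fundamental model in $\Delta_n$ lies in the finite set
\[
S_n \coloneqq \{(\nu,\mu) \in \NN^2 \setminus \{(0,0)\} \mid \nu + \mu \le 2n-1\},
\]
whose cardinality is $\binom{2n+1}{2} - 1 = n(2n+1)-1$.

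Finally, I would conclude by counting: the number of possible supports is at most $\binom{|S_n|}{n+1}$, which is finite. Since each fundamental model is determined by its support, the total number of fundamental models in $\Delta_n$ is finite.

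The argument is essentially immediate once \Cref{thm:degree-upper-bound} is in hand; there is no real obstacle beyond observing the two ingredients (fundamentality pins down the $c_i$ from the exponents, and the degree bound confines the exponents to a finite region). The substantive content of the corollary is therefore entirely deferred to the proof of the degree bound carried out in \Cref{sec:labeled-newton-diagrams}.
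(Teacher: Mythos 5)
Your proof is correct and follows exactly the same route as the paper's: the degree bound of Theorem \ref{thm:degree-upper-bound} confines the support to a finite set of exponent pairs, and fundamentality means the support determines the model, so only finitely many models exist. Your explicit count of $|S_n|$ and the binomial bound on the number of supports is a harmless elaboration of the same argument.
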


\begin{proof}
By Theorem \ref{thm:degree-upper-bound}, the degree of a fundamental model in $\Delta_n$ is at most $2n-1$. This limits their supports to exponent pairs $(\nu_i, \mu_i)$ where $\nu_i + \mu_i \le 2n-1$. Therefore, only a finite number of supports exists. Since fundamental models are uniquely determined by their support, only finitely many fundamental models exist, see also \cite[Remark 2.10]{ClassifyingOneDimensionalDiscreteModelsWithMaximumLikelihoodDegreeOne}.
\end{proof}

We end this section by presenting a fundamental model that is crucial in the detailed proof of Theorem \ref{thm:degree-upper-bound}. The binomial model $\BB_n \subseteq \Delta_n$ is defined by the parameterization map $$p : [0,1] \to \Delta_n, \quad t \mapsto \left(\binom{n}{i} t^i (1-t)^{n-i}\right)_{i=0}^n.$$ It models the number of successes out of an independent sequence of $n$ tries, each one with probability $t \in [0,1]$. It is clearly reduced and has degree $n$. Additionally, we have the following stronger statement.

\begin{theorem} \label{thm:binomial-model-fundamental}
The binomial model $\BB_n$ is fundamental.   
\end{theorem}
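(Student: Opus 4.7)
The plan is to unpack \Cref{def:fundamental-model} and verify the three conditions it imposes on $\BB_n$: that the model is reduced, that the coefficients $c_i = \binom{n}{i}$ satisfy the probability constraint, and that these are the only scalings doing so for the support $\{(i, n-i) : 0 \le i \le n\}$.

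The first two checks are routine. The pairs $(i, n-i)$ are pairwise distinct and, for $n \ge 1$, none equals $(0,0)$, so the model is reduced; and the classical binomial theorem gives $\sum_{i=0}^n \binom{n}{i} t^i (1-t)^{n-i} = (t + (1-t))^n = 1$. The real content lies in the uniqueness step.

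My approach there is to rephrase the constraint in homogeneous coordinates. Suppose some scalings $c_0, \ldots, c_n$ yield the polynomial identity $\sum_{i=0}^n c_i t^i(1-t)^{n-i} = 1$ in $t$. Then the homogeneous polynomial $f(x,y) \coloneqq \sum_{i=0}^n c_i x^i y^{n-i}$ of degree $n$ is identically $1$ on the affine line $x+y=1$. By homogeneity, $f(t, 1-t) = (1-t)^n f(t/(1-t), 1)$, and after substituting $u = t/(1-t)$ the identity reduces to $f(u, 1) = (1+u)^n$ as a polynomial identity in $u$. Since $f(u, 1) = \sum_{i=0}^n c_i u^i$, comparing coefficients with those of $(1+u)^n$ forces $c_i = \binom{n}{i}$ for all $i$.

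I do not foresee any serious obstacle; the essential observation is simply that any homogeneous polynomial of degree $n$ in two variables taking the value $1$ on the line $x+y=1$ must equal $(x+y)^n$, which rigidly determines the binomial coefficients. Equivalently, one can note that the family $\{t^i(1-t)^{n-i}\}_{i=0}^n$ is linearly independent and hence a basis of the space of polynomials of degree at most $n$, so the representation of the constant polynomial $1$ in this basis is unique.
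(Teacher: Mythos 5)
Your proof is correct, and the uniqueness step takes a genuinely different route from the paper. The paper expands $\sum_i c_i t^i(1-t)^{n-i}=1$ via the binomial theorem, collects the coefficient of each $t^\alpha$, and observes that the resulting linear system in $c_0,\dots,c_n$ is triangular, hence has a unique solution; this proves uniqueness abstractly, with existence supplied separately by the binomial theorem. You instead homogenize to $f(x,y)=\sum_i c_i x^i y^{n-i}$ and use the substitution $u=t/(1-t)$ (so $1-t=1/(1+u)$) to convert the constraint into the polynomial identity $\sum_i c_i u^i=(1+u)^n$, which holds for infinitely many $u$ and therefore identically, pinning down $c_i=\binom{n}{i}$ in one stroke. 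Your argument is slightly more conceptual: it simultaneously establishes existence and uniqueness, and it is in effect a direct, non-circular proof of the statement that any homogeneous degree-$n$ polynomial equal to $1$ on the line $x+y=1$ must be $(x+y)^n$ — a fact the paper records later (Lemma \ref{lemma:homogeneous-polynomial}(iii)) but derives \emph{from} this theorem via Corollary \ref{cor:binomial-model-homogeneous}. The paper's triangular-system argument, on the other hand, is the template reused in the proofs of Theorem \ref{thm:composition-fundamental-models} and Proposition \ref{prop:none}, so it earns its keep structurally. Your closing remark that the Bernstein-type family $\{t^i(1-t)^{n-i}\}_{i=0}^n$ is linearly independent is the right abstract summary, but note it is essentially equivalent to the uniqueness being proved, so it should be justified by one of the two arguments rather than cited as independent support.
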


\begin{proof}
By definition of a fundamental model we consider the polynomial equation $$\sum_{i=0}^n c_i t^i(1-t)^{n-i} = 1$$ and verify that the scalings $c_0, c_1 \ldots, c_n$ are uniquely determined. By the binomial theorem, $$\sum_{i=0}^n c_i t^i \left (\sum_{k=0}^{n-i} \binom{n-i}{k} (-t)^k \right ) = 1.$$ For $\alpha \in [n]$ we determine the coefficient of $t^\alpha$. Consider those $i\in[n]$ and $k\in[n-i]$ such that $\alpha=i+k$, then $k=\alpha-i$ and $\alpha\ge i$. We obtain the following equality: $$\sum_{i=0}^\alpha c_i t^i \binom{n-i}{\alpha-i} (-t)^{\alpha-i} = \sum_{i = 0}^\alpha c_i \binom{n-i}{\alpha-i} (-1)^{\alpha-i} t^\alpha.$$  Consequently, the linear system to solve is of the form 
\begin{align*}
c_0 &= 1, \quad \textup{ for } \alpha=0,\\
\sum_{i = 0}^\alpha c_i \binom{n-i}{\alpha-i} (-1)^{\alpha-i}&=0, \quad \textup{ for }\alpha>0.    
\end{align*}
 The coefficient matrix is a triangular matrix, i.e., the system has a unique solution. 
\end{proof}

We immediately obtain the following corollary.

\begin{corollary} \label{cor:binomial-model-homogeneous}
Let $\MM\subseteq\Delta_n$ be a reduced model of degree $d$ represented by $(c_i, \nu_i, \mu_i)_{i=0}^n$. If $\nu_i+\mu_i=d$ for all $i \in [n]$, then $\MM$ is unique. In particular, $\MM$ is the binomial model.
\end{corollary}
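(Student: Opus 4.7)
The plan is to exploit the homogeneity assumption to collapse the model to a single polynomial identity that pins down $f$ exactly. Concretely, I would associate to $\MM$ the bivariate polynomial $f = \sum_{i=0}^n c_i x^{\nu_i} y^{\mu_i} \in \RR[x,y]$ from the proof of Theorem \ref{thm:degree-upper-bound}. Because $\nu_i + \mu_i = d$ for every $i$, this $f$ is homogeneous of degree $d$; and because $\MM$ is an R1d model, $f(x,1-x) = 1$ for all $x \in \RR$. Combining these two facts is the whole idea.

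First I would compare $f$ to $(x+y)^d$. Using homogeneity, for any $t > 0$ and any $x$ we have $f(tx, t(1-x)) = t^d f(x, 1-x) = t^d$. Setting $u = tx$, $v = t(1-x)$, so that $u + v = t > 0$, this shows $f(u,v) = (u+v)^d$ on the open half-plane $\{u + v > 0\}$. Since two polynomials in $\RR[x,y]$ that agree on a nonempty Euclidean-open set are equal, we conclude the polynomial identity $f(x,y) = (x+y)^d$.

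Next I would expand $(x+y)^d = \sum_{j=0}^d \binom{d}{j} x^j y^{d-j}$ by the binomial theorem and match monomials. Every monomial of $(x+y)^d$ has strictly positive coefficient, so all $d+1$ of them must appear in $f$. Since $\MM$ is reduced, the $n+1$ exponent pairs $(\nu_i, \mu_i)$ are distinct, forcing $n+1 = d+1$ and the support to be exactly $\{(j, d-j) : 0 \le j \le d\}$. Relabeling so that $(\nu_i,\mu_i) = (i, n-i)$ then yields $c_i = \binom{n}{i}$, which is precisely the parameterization of $\BB_n$. Uniqueness is now automatic from this explicit identification, and is also recorded in Theorem \ref{thm:binomial-model-fundamental}.

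There is no serious obstacle here: the only point requiring a moment of care is upgrading the pointwise equality $f(u,v) = (u+v)^d$ on $\{u+v > 0\}$ to an identity in $\RR[x,y]$, which is immediate from the density of that half-plane. (One should also note that the reducedness assumption rules out the degenerate case $d = 0$, since $(0,0)$ is forbidden as an exponent pair, so $d \ge 1$ and the argument above is nonempty.)
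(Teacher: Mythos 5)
Your proof is correct, but it runs in the opposite logical direction from the paper's. The paper obtains this corollary ``immediately'' from Theorem~\ref{thm:binomial-model-fundamental}: there, the constraint $\sum_i c_i t^i(1-t)^{n-i}=1$ on the full diagonal support is expanded into a triangular linear system whose unique solution is $c_i=\binom{n}{i}$; since that unique solution is strictly positive, no proper subset of the diagonal can carry a reduced model, and the full diagonal forces the binomial model. You instead prove the polynomial identity $f=(x+y)^d$ first, by the Euler-type scaling argument $f(tx,t(1-x))=t^d$ on the half-plane $\{u+v>0\}$, and then read off the support and coefficients by matching monomials. This is a genuinely different and self-contained route: it does not invoke Theorem~\ref{thm:binomial-model-fundamental} at all, and it simultaneously establishes Lemma~\ref{lemma:homogeneous-polynomial}(iii), which the paper instead \emph{deduces from} this corollary --- so you have effectively inverted the paper's chain Theorem~\ref{thm:binomial-model-fundamental} $\Rightarrow$ Corollary~\ref{cor:binomial-model-homogeneous} $\Rightarrow$ Lemma~\ref{lemma:homogeneous-polynomial}(iii). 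What your approach buys is brevity and independence from the linear-algebra computation; what the paper's approach buys is the stronger fundamentality statement (uniqueness of the scalings as the solution of an explicit triangular system), which it needs anyway for later sections. Two small points of care, both of which you essentially handle: the model constraint only gives $f(t,1-t)=1$ for $t\in[0,1]$, so you should note that a univariate polynomial vanishing on an interval vanishes identically before quoting $f(x,1-x)=1$ on all of $\RR$; and your count $n+1=d+1$ uses that the $c_i$ are strictly positive (so none of the $n+1$ distinct monomials can cancel or vanish), which is part of the definition of a reduced model and worth saying explicitly.
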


\begin{remark}
    Corollary \ref{cor:binomial-model-homogeneous} in the language of CR geometry translates to the fact that if the components of a homogeneous polynomial sphere map are linearly independent, then $f$ is a binomial mapping up to unitary transformation. A more general version in several variables with multinomial mappings appears in \cite[Theorem 2.4]{RationalSphereMaps}.
\end{remark}

\section{Reduced Models and Newton Diagrams} \label{sec:labeled-newton-diagrams}

The proof of Theorem \ref{thm:degree-upper-bound} is essentially based on \cite[Theorem 1.1]{ASharpBoundForTheDegreeOfProperMonomialMappingsBetweenBalls}. Due to its combinatorial nature, the proof presented by D'Angelo, Kos and Riehl requires the consideration of a large number of cases, whose full enumeration is left implicit. We present their main concepts in the context of reduced statistical models and give a proof that reduces the number of cases to be considered.  We note the existence of an alternative proof in \cite{SumOfSquaresConjectureTheMonomialCaseInC3} (see their Theorem 4), and yet another proof in \cite[Section 3]{PolynomialsConstantOnAHyperplaneAndCRMapsOfHyperquadrics} with a view towards higher-dimensional mappings, see Section~\ref{sec:outlook} below. The approach presented here is closely linked to the theory explored in~\cite{ClassifyingOneDimensionalDiscreteModelsWithMaximumLikelihoodDegreeOne}.

For any reduced model $\MM\subseteq\Delta_n$ represented by a sequence of the form $(c_i,\nu_i,\mu_i)_{i=0}^n$ we define the following polynomial whose terms are completely described by the model: $$f_\MM \coloneqq c_0x^{\mu_0}y^{\nu_0} + c_1x^{\mu_1}y^{\nu_1} + \ldots + c_n x^{\mu_n}y^{\nu_n} \in \RR[x,y].$$ We further define the set of all such polynomials defined by reduced models as $$\FF \coloneqq \{ f_\MM \in \RR[x,y] \mid \MM \textup{ is a reduced model} \}.$$ 

We record the following useful facts about these polynomials.

\begin{lemma}\label{lemma:homogeneous-polynomial}
Let $\MM \subseteq \Delta_n$ and $f_\MM \in \FF$. Then the following properties hold:
\begin{itemize}
\item[\textup{(i)}] $f_\MM (x,y)=1$ on the line $x+y=1$.
\item[\textup{(ii)}] Each term of $f_\MM$ has a non-negative coefficient.
\item[\textup{(iii)}] If $f_\MM$ is homogeneous of degree $d$, then $f_\MM = (x+y)^d$.
\end{itemize}
\end{lemma}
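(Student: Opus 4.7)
My plan is to handle the three parts in order, since each one follows almost immediately from the definitions and results already established in the excerpt.

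\textbf{Part (i).} I would use the defining property of the parameterization map $p:[0,1]\to\Delta_n$: for every $t\in[0,1]$, the components $p_i(t)=c_i t^{\nu_i}(1-t)^{\mu_i}$ sum to one, i.e., $\sum_{i=0}^n c_i t^{\nu_i}(1-t)^{\mu_i}=1$ as a polynomial identity in $t$. Substituting $x=1-t$ and $y=t$ rewrites the left-hand side as $\sum_{i=0}^n c_i x^{\mu_i} y^{\nu_i}=f_\MM(x,y)$, giving $f_\MM(x,y)=1$ for every $(x,y)$ on the line $x+y=1$. This is also precisely the ``constant-on-a-hyperplane'' property highlighted in the introduction that links R1d models to the CR-geometric setup.

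\textbf{Part (ii).} This is immediate from the definition of an R1d model: the scalings $c_i$ are positive real numbers, so every coefficient of $f_\MM$ is strictly positive and in particular nonnegative.

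\textbf{Part (iii).} I see two natural routes, and I would present the first one as it keeps the lemma self-contained. The cleanest is a homogeneous-extension argument built on (i): if $f_\MM$ is homogeneous of degree $d$, then for any $(x,y)$ with $x+y\neq 0$, rescaling and using (i) at the point $(x/(x+y),\,y/(x+y))$ of the line $u+v=1$ gives
$$f_\MM(x,y) \;=\; (x+y)^d \, f_\MM\!\left(\frac{x}{x+y},\frac{y}{x+y}\right) \;=\; (x+y)^d.$$
Since this identity holds on the Zariski-dense set $\{x+y\neq 0\}$, it holds as an equality of polynomials. Alternatively, one may invoke Corollary~\ref{cor:binomial-model-homogeneous}: homogeneity forces every exponent pair to satisfy $\mu_i+\nu_i=d$, so the reduced model is unique and equals $\BB_d$, from which the binomial theorem gives $f_\MM=\sum_{i=0}^d\binom{d}{i}x^{d-i}y^i=(x+y)^d$.

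No part poses a serious obstacle; the only mildly subtle point is (iii), where one must upgrade ``equality of values on the affine line'' to ``equality as polynomials in two variables''. Both the homogeneous scaling argument and the appeal to the uniqueness corollary accomplish this cleanly, and I would favour the scaling version for brevity.
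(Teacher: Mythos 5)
Your proposal is correct. Parts (i) and (ii) match the paper, which simply observes they follow from the definition of a reduced model; the only elided step in (i), common to both your write-up and the paper's, is that the identity $\sum_i c_i t^{\nu_i}(1-t)^{\mu_i}=1$ is a priori known only for $t\in[0,1]$ and extends to the whole line because a univariate polynomial vanishing on an infinite set vanishes identically.

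For part (iii) you take a genuinely different route from the paper. The paper argues via Corollary~\ref{cor:binomial-model-homogeneous}: homogeneity forces $\nu_i+\mu_i=d$ for all $i$, so by the uniqueness of scalings for the binomial model (Theorem~\ref{thm:binomial-model-fundamental}) the model must be $\BB_d$ and hence $f_\MM=(x+y)^d$ by the binomial theorem — this is exactly your stated alternative. Your preferred argument instead uses only homogeneity together with (i): $f_\MM(x,y)=(x+y)^d f_\MM\bigl(x/(x+y),\,y/(x+y)\bigr)=(x+y)^d$ on the Zariski-dense set $x+y\neq 0$, hence as polynomials. This is valid and logically lighter — it makes the lemma independent of the fundamental-model machinery of Section~2 (Theorem~\ref{thm:binomial-model-fundamental} and its corollary), and it does not even require the support of $f_\MM$ to come from a reduced model. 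What the paper's route buys is the explicit identification of the homogeneous case with the binomial model, which is thematically reused later (e.g.\ $G_{\BB_d}$ as the starting diagram for unsplitting moves), but as a proof of (iii) your scaling argument is the cleaner of the two.
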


\begin{proof}
The first two properties follow at once from the definition of a reduced model. For (iii), let $f_\MM\in\FF$ be homogeneous of degree $d$ and suppose that $\MM$ is represented by $(c_i, \nu_i, \mu_i)_{i=0}^n$. Since $f_\MM$ is homogeneous of degree $d$, we have $\nu_i+\mu_i=d$ for all $i\in[n]$, and since $\MM$ is reduced, $\MM$ must be the binomial model $\BB_n$ by Corollary \ref{cor:binomial-model-homogeneous}. Hence, \begin{equation*}f_\MM = \sum_{i=0}^n \binom{n}{i}x^{n-i}y^i=(x+y)^n=(x+y)^d. \qedhere\end{equation*}
\end{proof}

For any reduced model $\MM$ of degree $d$, we decompose $f_\MM$ into its homogeneous parts: $$f_\MM = f_d+f_{d-1}+\ldots+f_1+f_0,$$ where $f_k$ has degree $k$. 

\begin{lemma}[{\cite[Lemma 3.2]{ASharpBoundForTheDegreeOfProperMonomialMappingsBetweenBalls}}] \label{lemma:homogeneous-parts}
Let $\MM \subseteq \Delta_n$ and $f_\MM\in\FF$ of degree $d$. If $f_\MM = \sum_{i\in[d]}f_i$ is the expansion of $f_\MM$ into homogeneous parts, then $$f_d+(x+y)f_{d-1}+\ldots+(x+y)^{d-1}f_1+(x+y)^df_0 = (x+y)^d.$$
\end{lemma}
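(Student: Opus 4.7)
The plan is to observe that both sides of the desired identity are homogeneous polynomials of degree $d$ in $x$ and $y$, and to pull them apart using the single constraint $f_\MM(x,y)=1$ on the line $x+y=1$, which is given by Lemma~\ref{lemma:homogeneous-polynomial}(i). The strategy is to verify the identity on the Zariski-open set $\{x+y\neq 0\}$ via a homogeneous reparameterization, and then extend by polynomial continuity to all of $\RR^2$.

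Concretely, I would introduce the substitution $x = st$, $y = s(1-t)$, where $s = x+y$ and $t = x/s$; this substitution covers every point of $\RR^2$ with $x+y\neq 0$. Under it, $x+y = s$, and homogeneity of each part yields $f_k(x,y) = f_k(st, s(1-t)) = s^k\, f_k(t, 1-t)$. Summing over $k$ gives
$$\sum_{k=0}^d (x+y)^{d-k} f_k(x,y) \;=\; \sum_{k=0}^d s^{d-k}\cdot s^k f_k(t, 1-t) \;=\; s^d \sum_{k=0}^d f_k(t, 1-t) \;=\; s^d\, f_\MM(t, 1-t).$$
Since $(t, 1-t)$ lies on the line $x+y=1$, Lemma~\ref{lemma:homogeneous-polynomial}(i) guarantees the last factor equals $1$, so the left-hand side collapses to $s^d = (x+y)^d$.

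This proves the identity on the dense open set $\{x+y\neq 0\}\subset\RR^2$, and because both sides are polynomials they must agree everywhere. There is no serious obstacle here: the only conceptual step is recognizing that the desired identity is exactly the homogenization of the constraint $f_\MM(x,y)=1$ on the line $x+y=1$, and the reparameterization $(s,t)\mapsto(st, s(1-t))$ makes this transparent by distributing the factor $s^d=(x+y)^d$ across the homogeneous pieces in precisely the right way.
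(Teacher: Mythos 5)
Your proof is correct. The skeleton matches the paper's: both recognize that the left-hand side is homogeneous of degree $d$ and restricts to $f_\MM \equiv 1$ on the line $x+y=1$ (on that line every factor $(x+y)^{d-k}$ is $1$, so the sum collapses to $f_\MM$). Where you diverge is in how the implication ``homogeneous of degree $d$ and equal to $1$ on the line $\Rightarrow$ equal to $(x+y)^d$'' is established. The paper delegates this to Lemma~\ref{lemma:homogeneous-polynomial}(iii), whose proof runs through Corollary~\ref{cor:binomial-model-homogeneous} and the uniqueness of the binomial model, and hence implicitly uses that the left-hand side has nonnegative coefficients and arises from a reduced model. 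Your rescaling argument $x=st$, $y=s(1-t)$ proves the same implication directly for \emph{any} polynomial that is $1$ on the line, with no appeal to nonnegativity or to the statistical structure; it is more elementary and slightly more general, and it sidesteps the small bookkeeping step of checking that the left-hand side genuinely lies in $\FF$ before Lemma~\ref{lemma:homogeneous-polynomial}(iii) may be invoked. The density argument at the end (agreement on $\{x+y\neq 0\}$ forces agreement of polynomials everywhere) is standard and correctly handled.
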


\begin{proof}
The polynomial on the left-hand side takes the value one on the line $x+y=1$. Since it is homogeneous of degree $d$, it must be $(x+y)^d$ by Lemma~\ref{lemma:homogeneous-polynomial}.
\end{proof}

Multiplying a term $c_{ab}x^ay^b$ of $f_i$ (where $a+b=i$) by $(x+y)$ results in the term $c_{ab}(x^{a+1}y^b+x^ay^{b+1})$ of one degree higher (a `splitting' move), and Lemma~\ref{lemma:homogeneous-parts} shows that iterating this process until all terms have degree $d$ yields $(x+y)^d$. As an important consequence, for any reduced model $\MM$, $f_\MM$ can be constructed from $(x+y)^d$ by a finite sequence of operations 
of the following form: replace $c_{ab}(x^{a+1}y^b+x^ay^{b+1})$ by $c_{ab}x^ay^b$, and keep the other terms the same. This operation is known as an unsplitting move within the theory of \emph{chipsplitting games} \cite[Section 3]{ClassifyingOneDimensionalDiscreteModelsWithMaximumLikelihoodDegreeOne}, and therefore we call this operation~\emph{unsplitting}. 

\begin{remark}
The unsplitting operation defined above corresponds to an `\emph{undoing}' in the CR geometry literature \cite[Remark 2.9 \& Theorem 2.5]{RationalSphereMaps}. This concept has been extensively used to study rational sphere maps and appears in celebrated results in the area such as \cite[Theorem 7.2]{RationalSphereMaps}.
\end{remark}

\begin{definition}
Let $\MM$ and $\MM'$ be two reduced models of degree $d$. Then $\MM$ is an \emph{ancestor} of $\MM'$ if $\MM'$ can be obtained from $\MM$ by finitely many unsplitting moves. 
\end{definition}

The operation of an unsplitting move is shown in the following example.

\begin{example}
The polynomial corresponding to the binomial model in $\Delta_7$ is given by $$f_{\BB_7} = x^7+7x^6y+21x^5y^2+35x^4y^3+35x^3y^4+21x^2y^5+7xy^6+y^7.$$ Replacing $7x^6y+7x^5y^2$ by $7x^5y$ and $7x^2y^5+7xy^6$ by $7xy^5$ yields the following polynomial: $$f_\MM = x^7+7x^5y+14x^5y^2+35x^4y^3+35x^3y^4+14x^2y^5+7xy^5+y^7.$$ That is, $\BB_7$ is an ancestor of the model corresponding to $f_\MM$. One can perform $13$ more unsplitting operations to obtain the following model, which we will return to in Example~\ref{ex:Newton-diagrams-sinks}: $$p:[0,1]\to\Delta_4, \quad t \mapsto \left(t^7, \frac{7}{2}t^5(1-t), \frac{7}{2}t(1-t), \frac{7}{2}t(1-t)^5, (1-t)^7\right).$$
\end{example}

We note the following factorization of $f_\MM-1$, see also \cite[Definition~3.3]{ASharpBoundForTheDegreeOfProperMonomialMappingsBetweenBalls}.

\begin{proposition}\label{prop:factor}
Let $f_\MM\in\FF$ be of degree $d$. Then there exists a polynomial $g_\MM\in\RR[x,y]$ of degree $d-1$ such that $$f_\MM-1=(x+y-1)g_\MM.$$
\end{proposition}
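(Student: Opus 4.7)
The key observation is that by Lemma~\ref{lemma:homogeneous-polynomial}(i), $f_\MM(x,1-x)=1$ identically in $x$, so the polynomial $f_\MM - 1 \in \RR[x,y]$ vanishes on the affine line $x+y-1 = 0$. The plan is to deduce from this vanishing that $x+y-1$ divides $f_\MM - 1$ in $\RR[x,y]$, and then to read off the total degree of the quotient $g_\MM$.

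To handle the divisibility, I would view $f_\MM - 1$ as an element of $(\RR[x])[y]$ and perform Euclidean division by $x+y-1$, which is monic of degree one in $y$. This yields a decomposition
$$f_\MM - 1 = (x+y-1) \, g_\MM + r(x),$$
with $g_\MM \in \RR[x,y]$ and remainder $r \in \RR[x]$ (of degree zero in $y$). Substituting $y = 1-x$ makes the left side vanish by the key observation while collapsing the right side to $r(x)$; hence $r \equiv 0$ and $f_\MM - 1 = (x+y-1) g_\MM$ as required.

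For the degree count, note that since $\MM$ is reduced, every exponent pair is nonzero, so each monomial of $f_\MM$ has positive total degree and $\deg(f_\MM - 1) = \deg f_\MM = d \geq 1$. Writing $g_\MM$ as a sum of homogeneous parts and letting $g_{(e)}$ be its top-degree summand of degree $e$, the top homogeneous component of $(x+y-1) g_\MM$ is $(x+y) g_{(e)}$, which is nonzero of degree $e+1$ since $\RR[x,y]$ is a domain and $x+y$ is not a zero divisor. Comparing this with $\deg(f_\MM - 1) = d$ forces $e+1 = d$, so $\deg g_\MM = d - 1$. I do not anticipate any genuine obstacle, as the proposition amounts to the standard fact that a polynomial vanishing on an irreducible hypersurface is divisible by its defining equation, combined with a degree count; an explicit formula for $g_\MM$ could alternatively be produced by applying the identity $(x+y)^k - 1 = (x+y-1)\sum_{j=0}^{k-1}(x+y)^j$ term-by-term to the expansion supplied by Lemma~\ref{lemma:homogeneous-parts}, but this is not needed for the existence statement.
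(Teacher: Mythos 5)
Your proof is correct, and it takes a genuinely different route from the paper. The paper constructs $g_\MM$ explicitly: it starts from the binomial model, where $f_{\BB_d}=(x+y)^d$ and $g_{\BB_d}=1+(x+y)+\dots+(x+y)^{d-1}$ works by the geometric-series identity, and then inducts on the number of unsplitting moves needed to reach $\MM$ from $\BB_d$ (each move replacing $c_{ab}(x^{a+1}y^b+x^ay^{b+1})$ by $c_{ab}x^ay^b$ simply subtracts $c_{ab}x^ay^b$ from $g$). You instead argue abstractly: Euclidean division by $x+y-1$, monic in $y$ over $\RR[x]$, leaves a remainder $r(x)$ that must vanish because $f_\MM(x,1-x)\equiv 1$, and the degree $d-1$ of the quotient follows by comparing top homogeneous components (using that $\RR[x,y]$ is a domain and that reducedness gives $d\ge 1$, so $\deg(f_\MM-1)=d$ and $g_\MM\neq 0$). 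Your argument is shorter, self-contained, and does not depend on the structural fact that every reduced model is a descendant of the binomial model (Lemmas \ref{lemma:homogeneous-polynomial} and \ref{lemma:homogeneous-parts}); what the paper's construction buys in exchange is an explicit description of the coefficients of $g_\MM$ in terms of the unsplitting moves, which is precisely what drives the subsequent Newton-diagram analysis (sinks, sources, and how $G_\MM$ is obtained from $G_{\BB_d}$ by flipping entries). So your proof establishes the proposition as stated, but if one only had your existence argument, the later sections would still need the paper's constructive bookkeeping.
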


\begin{proof}
Let $f_\MM\in\FF$ be of degree $d$. By Lemma \ref{lemma:homogeneous-polynomial}, we have $f_\MM(x,y)=1$ on the line $x+y=1$. Thus the polynomial $f_\MM-1$ vanishes on the variety $V_\mathbb{C}(x+y-1)$. By Hilbert's Nullstellensatz \cite[\S4.1 Theorem 2]{IdealsVarietiesAndAlgorithms}, this implies that a power of $f_\MM-1$ lies in the principal ideal $\langle x+y-1 \rangle$. Since $x+y-1$  is irreducible over $\RR$, we obtain $f_\MM-1\in\langle x+y-1 \rangle$. Therefore, there exists $g_\MM\in\RR[x,y]$ such that $f_\MM-1=(x+y-1)g_\MM.$ Since $\textup{deg}(f_\MM-1)=d$ and $\textup{deg}(x+y-1)=1$, we necessarily have $\textup{deg}(g_\MM)=d-1$.
\end{proof}

Studying the coefficients of $g_\MM$ for a reduced model $\MM\subseteq\Delta_n$ leads to the following notion. Let $g_{ab}$ be the coefficient of $x^ay^b$ in $g_\MM$. The \emph{Newton diagram} of $g_\MM$ is $$G_\MM:\ZZ^2\to\{ \texttt{0}, \texttt{P}, \texttt{N} \}, \qquad (a,b) \mapsto \begin{cases} \texttt{P}, & \textup{if } g_{ab}>0, \\ \texttt{0}, & \textup{if } g_{ab}=0, \\ \texttt{N}, & \textup{if } g_{ab}<0. \end{cases}$$ If $\MM$ has degree $d$, the Newton diagram $G_\MM$ can be visualized using a grid of size $d+1$. The grid point with coordinates $(a,b)$ represents the monomial $x^ay^b$. If the coefficient $g_{ab}$ of $x^ay^b$ in $g_\MM$ is zero, positive or negative we assign \texttt{0}, \texttt{P} or \texttt{N} to the grid point $(a,b)$, respectively.

\begin{definition}
    For every entry $(a,b)$ in $G_\MM$ consider the subdiagram that consists of the entry itself, the entry just below, and the entry to the left. Then $(a,b)$ is a \emph{sink} if the subdiagram is one of the subdiagrams shown in Figure \ref{fig:sign-pattern-sinks}. 
\end{definition}

\begin{figure}[htb]
\begin{center}
\begin{BVerbatim}[commandchars=\\\{\}]
P \textcolor{cb-green}{N}     P \textcolor{cb-green}{N}     0 \textcolor{cb-green}{N}     0 \textcolor{cb-green}{N}     P \textcolor{cb-green}{0}     0 \textcolor{cb-green}{0}     P \textcolor{cb-green}{0}
  P       0       P       0       P       P       0
\end{BVerbatim}
\end{center} \vspace{-0.5cm}
\caption{All subdiagrams for an entry to be a sink. The sink is the top-right entry.} \label{fig:sign-pattern-sinks}
\end{figure}

By the following lemma, the support size of $\MM$ is at least as large as the number of sinks. 

\begin{lemma}[{\cite[Proposition 3.8]{ASharpBoundForTheDegreeOfProperMonomialMappingsBetweenBalls}}] \label{lemma:sink-positive-coefficient}
    Let $\MM$ be a reduced model with associated Newton diagram $G_\MM$. If $G_\MM$ has a sink at $(a,b)$, then the coefficient of $x^ay^b$ in $f_\MM$ is positive. 
\end{lemma}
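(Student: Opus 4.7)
The plan is to leverage the factorization $f_\MM - 1 = (x+y-1)\, g_\MM$ proven in the preceding proposition, since this identity directly relates the coefficients of $f_\MM$ to the coefficients of $g_\MM$ — and the latter are exactly the data recorded by the Newton diagram $G_\MM$. Expanding the right-hand side as $x\, g_\MM + y\, g_\MM - g_\MM$ and comparing the coefficient of $x^a y^b$ on both sides yields, for every $(a,b) \neq (0,0)$,
\[
[x^a y^b]\, f_\MM \;=\; g_{a-1,b} \,+\, g_{a,b-1} \,-\, g_{a,b},
\]
with the convention that $g_{i,j} = 0$ whenever $i < 0$ or $j < 0$. The crucial observation is that the three entries appearing on the right-hand side are precisely the three entries of the subdiagram used to define a sink at $(a,b)$.

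With the coefficient formula in hand, the lemma reduces to inspecting the sign pattern in each of the seven subdiagrams in Figure \ref{fig:sign-pattern-sinks}. In every case the two ``left'' and ``below'' neighbors are labeled either \texttt{P} or \texttt{0}, so $g_{a-1,b}$ and $g_{a,b-1}$ contribute non-negatively; meanwhile, the sink entry $g_{a,b}$ is labeled either \texttt{N} or \texttt{0}, so $-g_{a,b}$ also contributes non-negatively. The definition of a sink ensures that among the three labels at least one is strictly non-zero in the favorable direction (at least one \texttt{P} among the neighbors or an \texttt{N} at the sink), which forces $[x^a y^b]\, f_\MM > 0$.

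The only mild bookkeeping concerns the boundary case $(a,b) = (0,0)$, where the constant $-1$ in $f_\MM - 1 = (x+y-1)\, g_\MM$ modifies the formula to $[1]\, f_\MM = 1 - g_{0,0}$. If $(0,0)$ is a sink, the only compatible subdiagram is the fourth one (since both out-of-range neighbors are treated as \texttt{0}), forcing $g_{0,0} < 0$ and hence $[1]\, f_\MM > 1 > 0$. I do not anticipate any structural obstacle: the real content is the one-line coefficient expansion from the factorization, after which the seven sink patterns collapse to a routine sign-check.
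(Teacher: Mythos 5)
Your proof is correct and follows essentially the same route as the paper: read off the coefficient of $x^ay^b$ from the factorization $f_\MM-1=(x+y-1)g_\MM$ as $g_{a-1,b}+g_{a,b-1}-g_{a,b}$ and verify positivity across the seven sink patterns, which you do in a single unified sign argument rather than case by case. The only remark is that your boundary case $(a,b)=(0,0)$ is vacuous, since $g_{00}=1>0$ means $(0,0)$ is always labeled \texttt{P} and hence can never be a sink.
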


\begin{proof}
The coefficient of $x^ay^b$ in $f_\MM$ is the coefficient of $x^ay^b$ in the following polynomial: $$(x+y-1)(g_{(a-1)b}x^{a-1}y^b+g_{a(b-1)}x^ay^{b-1}+g_{ab}x^ay^b).$$ It is easy to check that this coefficient is positive for all possible sink situations. For example, considering the first subdiagram in Figure \ref{fig:sign-pattern-sinks}, we have $g_{(a-1)b}, g_{a(b-1)}>0$ and $g_{ab}<0$. Then the coefficient of $x^ay^b$ in the polynomial above is $g_{(a-1)b} + g_{a(b-1)}+(-1)g_{ab}>0$.
\end{proof}

This immediately gives the following lower bound.
 
\begin{corollary} \label{cor:support-size-sinks-bound}
The support size of $\MM$ is bounded below by the number of sinks in $G_\MM$.
\end{corollary}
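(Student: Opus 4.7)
The plan is to read this off almost directly from Lemma~\ref{lemma:sink-positive-coefficient}. First I would note that, by definition, a reduced model $\MM\subseteq\Delta_n$ with representation $(c_i,\nu_i,\mu_i)_{i=0}^n$ has support $\{(\nu_i,\mu_i)\mid i\in[n]\}$, whose cardinality is exactly the number of monomials appearing in $f_\MM = \sum_{i=0}^n c_i x^{\mu_i} y^{\nu_i}$ with nonzero coefficient (and all such coefficients are positive by Lemma~\ref{lemma:homogeneous-polynomial}(ii)).

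Next I would build the required inequality by constructing an injection from the set of sinks of $G_\MM$ into the support of $\MM$. Given any sink $(a,b)$ in $G_\MM$, Lemma~\ref{lemma:sink-positive-coefficient} tells us that the coefficient of $x^a y^b$ in $f_\MM$ is strictly positive. Hence there is some index $i\in[n]$ with $(\mu_i,\nu_i)=(a,b)$, i.e.\ $(b,a)$ lies in the support of $\MM$. Sending a sink $(a,b)$ to the support element $(b,a)$ is clearly injective since distinct sinks correspond to distinct monomials $x^a y^b$ and hence to distinct support pairs.

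Finally, I would conclude: the number of sinks of $G_\MM$ equals the cardinality of the image of this injection, which is at most $|\{(\nu_i,\mu_i)\mid i\in[n]\}|$, the support size of $\MM$. There is no real obstacle here — the corollary is an immediate bookkeeping consequence of Lemma~\ref{lemma:sink-positive-coefficient} together with the fact that reduced models have pairwise distinct exponent pairs. Accordingly, in the write-up I would keep the argument to two or three sentences and simply cite Lemma~\ref{lemma:sink-positive-coefficient}.
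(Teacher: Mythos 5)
Your argument is correct and matches the paper's reasoning exactly: the corollary is stated there as an immediate consequence of Lemma~\ref{lemma:sink-positive-coefficient}, since each sink forces a positive coefficient in $f_\MM$ and hence a distinct support element. Your explicit injection is just the bookkeeping the paper leaves implicit.
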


Analogously to a sink, define a \emph{source} by reversing all signs in the definition of a sink, i.e., an entry is a source if its corresponding subdiagram is one of the diagrams shown in~Figure~\ref{fig:sign-pattern-sources}. 

\begin{figure}[htb]
\begin{center}
\begin{BVerbatim}[commandchars=\\\{\}]
N \textcolor{cb-red}{P}     N \textcolor{cb-red}{P}     0 \textcolor{cb-red}{P}     0 \textcolor{cb-red}{P}     N \textcolor{cb-red}{0}     0 \textcolor{cb-red}{0}     N \textcolor{cb-red}{0}
  N       0       N       0       N       N       0
\end{BVerbatim}
\end{center} \vspace{-0.5cm}
\caption{All subdiagrams for an entry to be a source. The source is the top-right entry.} \label{fig:sign-pattern-sources}
\end{figure}

\begin{lemma}[{\cite[Proposition 3.8]{ASharpBoundForTheDegreeOfProperMonomialMappingsBetweenBalls}}] \label{lemma:unique-source}
Let $\MM$ be a reduced model with Newton diagram $G_\MM$. Then $G_\MM$ has a unique source, namely at $(0,0)$.
\end{lemma}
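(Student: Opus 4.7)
The plan is to exploit the factorization $f_\MM - 1 = (x+y-1)g_\MM$ and read off what it forces on the coefficients $g_{ab}$ of $g_\MM$. Writing $g_\MM = \sum_{a,b \ge 0} g_{ab}\, x^a y^b$ with the convention $g_{ab} = 0$ whenever $a<0$ or $b<0$, a direct expansion shows that the coefficient of $x^a y^b$ in $(x+y-1)g_\MM$ is $g_{(a-1)b} + g_{a(b-1)} - g_{ab}$. Since $\MM$ is reduced, no term of $f_\MM$ is a constant, so $f_\MM$ has constant term $0$; matching constant coefficients in the factorization therefore yields $g_{00}=1$. For $(a,b)\ne(0,0)$, the coefficient of $x^a y^b$ in $f_\MM$ is nonnegative by Lemma \ref{lemma:homogeneous-polynomial}(ii), and comparing coefficients would then give
$$ g_{(a-1)b} + g_{a(b-1)} - g_{ab} \;\ge\; 0 \qquad \text{for every } (a,b)\ne(0,0). $$

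With this in hand, I would first verify that $(0,0)$ is indeed a source: since $g_{00}=1>0$ is a \texttt{P} and both neighbors $g_{(-1)\,0}$ and $g_{0\,(-1)}$ vanish by convention, the subdiagram at $(0,0)$ matches the pattern \texttt{0 P / 0} from Figure~\ref{fig:sign-pattern-sources}.

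For uniqueness, my plan is to replace the seven-fold case analysis suggested by Figure~\ref{fig:sign-pattern-sources} by a single uniform inequality. Suppose that some $(a,b)\ne(0,0)$ were a source. Inspecting the figure, every one of the seven admissible subdiagrams simultaneously satisfies $g_{(a-1)b}\le 0$, $g_{a(b-1)}\le 0$, and $g_{ab}\ge 0$, and moreover has at least one of these three inequalities strict (this is precisely what distinguishes a source from the trivial all-\texttt{0} configuration \texttt{0 0 / 0}). These sign constraints force $g_{(a-1)b} + g_{a(b-1)} - g_{ab} < 0$, contradicting the displayed nonnegativity and excluding any source outside the origin.

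The only mildly delicate step is isolating the single sign inequality that rules out all seven source patterns at once; once that observation is made, the argument collapses to a short coefficient comparison, so I do not anticipate any serious obstacle beyond careful bookkeeping of the boundary conventions at the axes.
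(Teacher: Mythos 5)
Your proof is correct and follows the same route as the paper: $(0,0)$ must be a source to produce the $-1$ in $f_\MM-1$, and any other source would make the coefficient $g_{(a-1)b}+g_{a(b-1)}-g_{ab}$ of $x^ay^b$ in $f_\MM$ negative, contradicting Lemma~\ref{lemma:homogeneous-polynomial}(ii). Your single uniform inequality (three nonpositive summands, at least one strict) is a tidy way to dispatch the seven source patterns that the paper handles by appeal to the case check in the proof of Lemma~\ref{lemma:sink-positive-coefficient}.
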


\begin{proof}
We have a source at $(0,0)$ to obtain the $-1$ in $f_\MM-1$. Any other source would force the corresponding coordinate of the model parametrization to have a negative coefficient, which can be seen analogously to the procedure in the proof of Lemma \ref{lemma:sink-positive-coefficient}.
\end{proof}

The Newton diagram $G_\MM$ has at least two sinks on the axes.

\begin{lemma}[{\cite[Lemma 3.9]{ASharpBoundForTheDegreeOfProperMonomialMappingsBetweenBalls}}] \label{lemma:sinks-axes}
Let $\MM$ be a reduced model of degree $d$ with associated Newton diagram $G_\MM$. Then there exist $0 < A, B \le d$ such that $g_{a0}>0$ for $0 \le a < A$ and $g_{a0}=0$ for $A \le a \le d$, and $g_{0b}>0$ for $0 \le b < B$ and $g_{0b}=0$ for $B\le b \le d$. In particular, $G_\MM$ has two sinks on the axes.
\end{lemma}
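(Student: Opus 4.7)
My plan is to localize the identity $f_\MM - 1 = (x+y-1)\, g_\MM$ to each coordinate axis and use non-negativity of the coefficients of $f_\MM$ to extract a monotonicity structure on the axial coefficients of $g_\MM$. For the $x$-axis I set $y = 0$, obtaining the univariate identity $\phi(x) - 1 = (x-1)\psi(x)$, where $\phi(x) \coloneqq f_\MM(x, 0)$ and $\psi(x) \coloneqq g_\MM(x, 0) = \sum_{a=0}^{d-1} g_{a0}\, x^a$.

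Two facts about $\phi$ drive the argument. By Lemma \ref{lemma:homogeneous-polynomial}(ii) every coefficient of $\phi$ is non-negative, and since $\MM$ is reduced the pair $(0,0)$ is excluded from its support, so the constant term of $\phi$ vanishes. Comparing coefficients in $\phi - 1 = (x-1)\psi$ then yields $g_{00} = 1$ from the constant term, the inequalities $g_{(k-1)0} - g_{k0} \ge 0$ for $1 \le k \le d-1$, and $g_{(d-1)0} \ge 0$ from the degree-$d$ coefficient. Together these force $1 = g_{00} \ge g_{10} \ge \cdots \ge g_{(d-1)0} \ge 0$, a non-increasing non-negative sequence starting at $1$. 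Taking $A$ to be the smallest index at which this sequence vanishes (or $A \coloneqq d$ if it never does, noting $g_{d0}=0$ trivially since $\deg g_\MM = d-1$) delivers the threshold with $0 < A \le d$ and the required sign pattern on $(g_{a0})_{a=0}^d$. The symmetric substitution $x = 0$ produces the corresponding $B$ for the $y$-axis.

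To extract the two axial sinks I would then read off the sign pattern at $(A, 0)$ and $(0, B)$, adopting the convention that lattice points outside $\ZZ_{\ge 0}^2$ carry label $\mathtt{0}$ in the Newton diagram. The entry $(A, 0)$ has label $\mathtt{0}$, its left neighbour $(A-1, 0)$ has label $\mathtt{P}$, and the formal neighbour just below it contributes $\mathtt{0}$, matching the pattern on the far right of Figure \ref{fig:sign-pattern-sinks}; hence $(A, 0)$ is a sink. The case $(0, B)$ is symmetric and fits the pattern in the sixth position of the same figure. The only step I expect to require explicit care during the write-up is this boundary bookkeeping, since the sink patterns are defined on $2 \times 2$ blocks and one must verify that treating out-of-range entries as $\mathtt{0}$ genuinely realises the listed subdiagrams at the axes.
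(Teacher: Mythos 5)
Your proof is correct, and it reaches the conclusion by a more direct route than the paper. The paper derives the sign pattern on the axes by contradiction: a negative $g_{a0}$, or a positive entry following a zero, would create a second source, contradicting Lemma \ref{lemma:unique-source}. You instead restrict the factorization $f_\MM-1=(x+y-1)g_\MM$ to $y=0$ and compare coefficients in $\phi(x)-1=(x-1)\psi(x)$ directly, obtaining the chain $1=g_{00}\ge g_{10}\ge\cdots\ge g_{(d-1)0}\ge 0$. This is the same underlying computation (the coefficient of $x^{k}$ in $f_\MM(x,0)$ equals $g_{(k-1)0}-g_{k0}$, which is exactly what makes the paper's would-be source impossible), but packaged as a monotone non-negative sequence it yields the threshold $A$ without any case analysis on source subdiagrams, bypasses Lemma \ref{lemma:unique-source} entirely, and actually proves something slightly stronger than the lemma claims, namely that the axial coefficients of $g_\MM$ are non-increasing. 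Your boundary bookkeeping for the sinks at $(A,0)$ and $(0,B)$ is also consistent with the paper: $G_\MM$ is defined on all of $\ZZ^2$, so entries with a negative coordinate are genuinely labeled $\mathtt{0}$, and the patterns you identify are the sixth and seventh subdiagrams of Figure \ref{fig:sign-pattern-sinks}. The trade-off is that the paper's source-based argument is the one that generalizes to interior points of the diagram (where a full two-dimensional recursion replaces your univariate restriction), which is why the paper sets it up that way; your version is the cleaner and more self-contained proof for the axes themselves.
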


\begin{proof}
We have $g_{00}=1$. Now assume that there exists an $a$ such that $g_{a0}<0$. Then $(a+1,0)$ with maximal such an $a$ would be a source which contradicts Lemma \ref{lemma:unique-source}. Consequently, we have $g_{a0}\ge 0$ for all $0 \le a \le d-1$. Next we claim that, if $g_{a0}=0$, then $g_{a'0}=0$ for all $a'>a$. Again, if this were false, then we would have a source at the point where the first \texttt{P} after a \texttt{0} occurs. Thus, there is an $A$ such that $g_{a0}>0$ for $0 \le a < A$ and $g_{a0}=0$ for $A \le a \le d$. By symmetry the analogous reasoning holds for entries $(0,b)$. Therefore, $(A,0)$ and $(0,B)$ are sinks in the Newton diagram of $\MM$.
\end{proof}

In the following we will give a stronger lower bound for the number of sinks of a Newton diagram. For this, we define the boundary of a Newton diagram, which is motivated by the following observation. If $\MM$ is a reduced model of degree $d$, then $g_\MM$ is a polynomial of degree $d-1$. That is, considering $G_\MM$, all coordinates on the diagonal where $a+b=d$ are assigned $\texttt{0}$. Depending on the model, $G_\MM$ may have many additional zeros. 

The \emph{boundary} of $G_\MM$ is defined to be the collection of $d+1$ points $(a,b)$ with the following properties: 
\begin{itemize}
\item[(B1)] $(a',b')$ is labeled $\texttt{0}$ whenever $a'\ge a$ and $b'\ge b$.
\item[(B2)] In each row and column we choose the minimal $(a,b)$ satisfying (B1).
\end{itemize}

\begin{definition}
    Let $G_\MM$ be a Newton diagram where $(a,b)$ is labeled \texttt{0} or \texttt{N}. Then $(a,b)$ is \emph{connected to the boundary} if there exists a collection of points $L_i$ for $i \in [k]$ such that 
    \begin{itemize}
        \item[(1)] $L_0 = (a,b)$, and if $L_k = (a',b')$, then $a'+b'=d-1$,
        \item[(2)] $L_i$ has label \texttt{0} or \texttt{N} for all $i \in [k]$, 
        \item[(3)] $L_{i+1}=L_i+(1,0)$ or $L_{i+1}=L_i+(0,1)$ for each $i$.
    \end{itemize}
    We refer to such a collection as the \emph{path} from $(a,b)$ to the $(d-1)$-diagonal.
\end{definition}

Recall that every reduced model $\MM$ of degree $d$ can be obtained from $\BB_d$ by a finite sequence of unsplitting moves. Each unsplitting move replaces a term $c_{ab}(x^{a+1}y^b+x^ay^{b+1})$ in $f_\MM$ by $c_{ab}x^ay^b$, yielding a new polynomial $f_{\MM'}$. If $g_\MM$ is defined by $f_\MM-1=(x+y-1)g_\MM$, then 
\begin{align*}
(x+y-1)(g_\MM-c_{ab}x^ay^b) &= (x+y-1)g_\MM-(x+y-1)c_{ab}x^ay^b \\
&= (f_\MM-1)-c_{ab}x^{a+1}y^b-c_{ab}x^ay^{b+1}+c_{ab}x^ay^b \\ 
&= f_{\MM'}-1.
\end{align*}
Thus, $g_{\MM'} = g_\MM-c_{ab}x^ay^b$. In terms of the Newton diagram of $g_\MM$, an unsplitting move affects only the entry at $(a,b)$: its label may change from \texttt{P} to \texttt{0} or \texttt{N}, depending on the magnitude of $c_{ab}$. Conversely, a splitting move corresponds to multiplying a monomial by $x+y$, which may spread a sink on a lower diagonal to two entries on the next diagonal above.

We now prove the main ingredient for the proof of Theorem \ref{thm:degree-upper-bound}. The proof idea essentially follows \cite{ASharpBoundForTheDegreeOfProperMonomialMappingsBetweenBalls}. However, we clarify several details and provide some alternative arguments. 

\begin{proposition}[{\cite[Proposition 3.11]{ASharpBoundForTheDegreeOfProperMonomialMappingsBetweenBalls}}] \label{prop:sinks-lower-bound}
Let $\MM$ be a reduced model of degree $d$ with Newton diagram $G_\MM$. Then $G_\MM$ has at least $2+\lceil \frac{d-1}{2}\rceil$ sinks.
\end{proposition}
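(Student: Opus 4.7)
My plan is to count the sinks in two groups: those on the coordinate axes, and additional interior sinks found by analyzing the $\texttt{P}/\texttt{N}/\texttt{0}$ pattern of $G_\MM$.

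First, Lemma \ref{lemma:sinks-axes} directly provides two sinks $(A, 0)$ and $(0, B)$ on the axes, contributing the additive constant $2$ in the bound.

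Second, to produce the remaining $\lceil (d-1)/2 \rceil$ sinks, I use the notion of being \emph{connected to the boundary}. By Lemma \ref{lemma:unique-source}, the only source of $G_\MM$ is at the origin, so every interior $\texttt{P}$ entry must have at least one $\texttt{P}$ neighbor among $(a-1,b)$ and $(a,b-1)$ --- otherwise it would itself be a source. Consequently, the $\texttt{P}$-region is connected towards the origin, and its northeast frontier --- the staircase separating $\texttt{P}$ cells from $\texttt{0}/\texttt{N}$ cells --- houses additional sinks at its concave corners. Likewise, any $\texttt{N}$ entry that is separated from the boundary (as defined just before the proposition) forces a sink, since a maximal run of $\texttt{N}$s extending toward the boundary must eventually hit a $\texttt{P}$ neighbor.

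Third, to obtain the count $\lceil (d-1)/2 \rceil$, I run a pairing argument over the diagonals $a + b = k$ for $k = 1, \ldots, d-1$. The claim is that every pair of consecutive diagonals yields at least one new sink: either the $\texttt{P}$/non-$\texttt{P}$ frontier turns within the pair, producing a sink of pattern (5), (6), or (7) in Figure \ref{fig:sign-pattern-sinks}, or an $\texttt{N}$ entry in the interior is separated from the boundary, yielding a sink of pattern (1)--(4). Pairing the diagonals as $\{1,2\}, \{3,4\}, \ldots$ gives $\lceil (d-1)/2 \rceil$ such pairs, and each pair contributes a sink disjoint from the axis sinks identified in the first step.

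The main obstacle is the case analysis in the third step. The seven sink patterns of Figure \ref{fig:sign-pattern-sinks} interact with the seven source patterns of Figure \ref{fig:sign-pattern-sources} in subtle ways, and proving both existence of a sink in each diagonal pair and disjointness across pairs requires carefully tracking sign changes along the frontier while invoking the no-source condition repeatedly. The tightness of the bound is confirmed by the model $f = x^3 + 3xy + y^3$ from the earlier example: one computes $g = x^2 - xy + y^2 + x + y + 1$, whose Newton diagram has exactly the three sinks $(3,0)$, $(0,3)$, and $(1,1)$, matching $2 + \lceil 2/2 \rceil = 3$ with no room to spare.
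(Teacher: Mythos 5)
There is a genuine gap at the heart of your third step. The claim that ``every pair of consecutive diagonals $a+b=k$, $k=1,\dots,d-1$, yields at least one new sink'' is false as stated: for the binomial model $\BB_d$ the Newton diagram is labeled \texttt{P} at every point with $a+b\le d-1$ and \texttt{0} on the $d$-diagonal, so \emph{all} $d+1$ of its sinks sit on the diagonal $a+b=d$ and none of your diagonal pairs $\{1,2\},\{3,4\},\dots$ contains a sink at all. (The total count is of course still fine there, but the counting scheme that is supposed to produce $\lceil\frac{d-1}{2}\rceil$ extra sinks breaks on the very first example.) Relatedly, the disjointness claims are only asserted: the axis sinks $(A,0)$ and $(0,B)$ from Lemma \ref{lemma:sinks-axes} lie on the diagonals $a+b=A$ and $a+b=B$, which can fall inside your pairs, so the ``$2+{}$'' and the ``$\lceil\frac{d-1}{2}\rceil$'' contributions are not automatically disjoint. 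Finally, you explicitly defer the case analysis that would make the frontier/separation dichotomy precise, and that analysis is exactly where the content of the proposition lives; as written, the proposal is a plan rather than a proof.

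For comparison, the paper's proof avoids counting diagonal by diagonal. It first shows (Step 1) that any reduced model has an \emph{ancestor} --- obtained by resetting regions of \texttt{0}'s and \texttt{N}'s back to \texttt{P} --- with at most as many sinks and with all sinks confined to the $d$- and $(d-1)$-diagonals; the no-source condition (Lemma \ref{lemma:unique-source}) is used there to control how such regions can spread and how sinks can migrate or merge. Then (Step 2) it counts sinks in that normalized situation, using the key observation that two sinks on the $d$-diagonal can only merge into one on the $(d-1)$-diagonal when a \texttt{P} there becomes an \texttt{N}, and that such merges occur in \emph{disjoint} pairs because the neighbors $(a+1,b-1)$ and $(a-1,b+1)$ must stay \texttt{P}. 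That disjoint-pairing of merges is the combinatorial engine that yields $2+\lceil\frac{d-1}{2}\rceil$; some analogue of it would have to appear in any repaired version of your argument.
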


\begin{proof}
We first consider the binomial model $\BB_d$ of degree $d$. Its Newton diagram has zeros on the diagonal where $a+b=d$, and $\texttt{P}$ otherwise. It follows that $G_{\BB_d}$ has $d+1$ sinks. If $\MM$ is another reduced model of degree $d$, we obtain $\MM$ from $\BB_d$ by performing finitely many unsplitting moves. Analogously, we obtain $G_\MM$ from $G_{\BB_d}$ by changing finitely many entries from $\texttt{P}$ to $\texttt{N}$ or $\texttt{0}$.

The main idea of the proof is as follows. In the first step we consider an arbitrary reduced model $\MM$ that has some sinks not on the diagonals $a+b=d$ and $a+b=d-1$. We show that there exists an ancestor of $\MM$, denoted~$\MM^*$, with the following properties: $G_{\MM^*}$ has at most as many sinks as $G_\MM$ and all sinks in $G_{\MM^*}$ are on the $d$- and $(d-1)$-diagonal. This step is based on the different ways in which sinks can propagate from the $d$-diagonal into the interior when creating $\MM$ from $\BB_d$. In the second step we show that a Newton diagram $G_{\MM^*}$ with sinks only on the $d$- and $(d-1)$-diagonal has at least $2+\lceil \frac{d-1}{2}\rceil$ sinks.

\textcolor{cb-red}{\textit{Step 1.}} Let $G_\MM$ be a Newton diagram obtained from a reduced model of degree $d$ that has at least one sink $(s_a,s_b)$ not on the $d$- and $(d-1)$-diagonal. A region containing \texttt{0}'s and \texttt{N}'s that is not connected to the boundary contains at least one sink that is not connected to the boundary. These sinks disappear when the region is set to \texttt{P}. Hence, without loss of generality we assume that all sinks of $G_\MM$ are connected to the boundary. When creating $G_\MM$ from $\BB_d$, sinks can only move to the southwest when performing the corresponding unsplitting moves; otherwise we would create a source, contradicting Lemma~\ref{lemma:unique-source}, see Figure \ref{fig:case1}.
\begin{figure}[htb]
\centering
\begin{BVerbatim}[commandchars=\\\{\}]
\textcolor{cb-green}{0}
P \textcolor{cb-green}{0}
P P \textcolor{cb-green}{0}
P 0 \textcolor{cb-red}{P} 0
P N N N 0
P \textcolor{cb-green}{N} N N \textcolor{cb-red}{P} \textcolor{cb-green}{0}
P P P \textcolor{cb-green}{N} N P \textcolor{cb-green}{0}
\textcolor{cb-red}{P} P P P P P P \textcolor{cb-green}{0}
\end{BVerbatim}

\caption{Regions of \texttt{0}'s and \texttt{N}'s expanding to the north or east result in additional sources.}\label{fig:case1}
\end{figure}
We proceed as follows. We start with $G_\MM$ and set \texttt{0}'s and \texttt{N}'s back to \texttt{P} to obtain~$G_{\MM^*}$ with the desired properties. We distinguish two cases: First, all paths from $(s_a,s_b)$ to the $(d-1)$-diagonal have a unique endpoint. Second, the paths have different endpoints. 

\textcolor{cb-red}{\textit{Case 1 -- Propagation from a unique endpoint.}} Let $(s_a,s_b)$ be a sink in $G_\MM$ such that all paths from $(s_a, s_b)$ to the $(d-1)$-diagonal have a unique endpoint $(e_a,e_b)$. For example, this is the case for the two sinks labeled \texttt{N} in Figure \ref{fig:case1}. If $(e_a,e_b)$ is a sink in $G_\MM$, then changing all entries on the paths except $(e_a,e_b)$ back to \texttt{P} reduces the number of sinks by at least one. If $(e_a,e_b)$ is not a sink in $G_\MM$, then after the operation we have at most as many sinks as in $G_\MM$ since $(s_a,s_b)$ is no longer a sink, but $(e_a,e_b)$ becomes a sink. We continue with the second case where the endpoints are not unique.

\textcolor{cb-red}{\textit{Case 2 -- Propagation via zero segments.}} Assume that $(s_a,s_b)$ is a sink in $G_\MM$ such that all possible paths from $(s_a, s_b)$ to the $(d-1)$-diagonal have $k$ different endpoints $(e_{a_i},e_{b_i})$, $k>1$. For example, this is the case for the sink labeled \texttt{N} in Figure \ref{fig:case2}. Then $(e_{a_i},e_{b_i})$ must be labeled \texttt{0} for all $i\in[k-1]$, which follows from two essential observations. First, if an entry $(e_{a_i},e_{b_i})$ on the $(d-1)$-diagonal is labeled \texttt{N}, then $(e_{a_i}+1,e_{b_i}-1)$ and $(e_{a_i}-1,e_{b_i}+1)$ must be labeled \texttt{P}, otherwise $(e_{a_i}+1,e_{b_i})$ and $(e_{a_i},e_{b_i}+1)$ would be sources, contradicting Lemma~\ref{lemma:unique-source}. Second, the $k$ points $(e_{a_i},e_{b_i})$ on the diagonal must be adjacent to each other. Otherwise there would be at least one \texttt{P} that lies within a region of \texttt{N}'s and \texttt{0}'s, which implies an additional source, again contradicting Lemma \ref{lemma:unique-source}.
\begin{figure}[htb]
\centering
\begin{BVerbatim}[commandchars=\\\{\}]
\textcolor{cb-green}{0}
P \textcolor{cb-green}{0}
P P \textcolor{cb-green}{0}
P P P 0
P N N N 0
P N N \textcolor{cb-red}{P} P \textcolor{cb-green}{0}
P \textcolor{cb-green}{N} N N N N \textcolor{cb-green}{0}
\textcolor{cb-red}{P} P P P P P P \textcolor{cb-green}{0}
\end{BVerbatim}

\caption{Regions of \texttt{0}'s and \texttt{N}'s with \texttt{P}'s in its interior result in additional sources.}\label{fig:case2}
\end{figure}

Consequently, we study a region of \texttt{0}'s and \texttt{N}'s spreading towards the southwest from the diagonal segment with entries $(e_{a_i},e_{b_i})$. Denote the region southwest of $(e_{a_i},e_{b_i})_{i\in[k-1]}$ by $\mathcal{R}$. Such a region cannot touch both axes since $G_\MM$ has at least one \texttt{P} on the $(d-1)$-diagonal. If this were the case, we would again have an additional source, contradicting Lemma \ref{lemma:unique-source}.
\begin{figure}[htb]
\centering
\begin{BVerbatim}[commandchars=\\\{\}]
\textcolor{cb-green}{0}
P \textcolor{cb-green}{0}
P P \textcolor{cb-green}{0}
\textcolor{cb-red}{P} P P \textcolor{cb-green}{0}
\textcolor{cb-green}{0} 0 0 0 0
P \textcolor{cb-green}{0} \textcolor{cb-green}{0} 0 0 0
P P P \textcolor{cb-green}{0} 0 0 0
\textcolor{cb-red}{P} P P P \textcolor{cb-green}{0} 0 0 0
\end{BVerbatim}

\caption{Non-positive regions touching both axes result in additional sources.}
\end{figure}

Without loss of generality, we assume that the region does not touch the vertical axis. Then, for each of the $k$ entries $(e_{a_i},e_{b_i})$, there is a segment of adjacent \texttt{0}'s and \texttt{N}'s to its left. We denote the left endpoints of these segments by $(l_{a_i}, e_{b_i})$. Note that $(e_{a_i},e_{b_i})$ and $(l_{a_i}, e_{b_i})$ may coincide for some $i\in[k-1]$. If all $(l_{a_i}, e_{b_i})$ are sinks, we set all entries in $\mathcal{R}$ to \texttt{P}. In this way, the sinks migrate to the $(d-1)$-diagonal, so that all $(e_{a_i},e_{b_i})$ become sinks. The resulting Newton diagram has at most as many sinks as $G_\MM$.

Now assume that there exists an $(l_{a_i}, e_{b_i})$ that is no sink. Since $(l_{a_i}-1, e_{b_i})$ is labeled \texttt{P}, $(l_{a_i}, e_{b_i}-1)$ must be labeled \texttt{N}. There are two cases. First, $(l_{a_i}, e_{b_i}-1)$ is no element of the segment $[(l_{a_{i-1}},e_{b_{i-1}}), (e_{a_{i-1}},e_{b_{i-1}})]$, or second, it is. In the first case, we set all entries in the region with \texttt{0}'s and \texttt{N}'s southwest of $(l_{a_i}, e_{b_i})$ to \texttt{P}. Since this region must contain at least one sink and after the operation $(l_{a_i}, e_{b_i})$ is a sink, the number of sinks is not increased in this step. We can now assume that for all $(l_{a_i}, e_{b_i})$ that are no sinks, $(l_{a_i}, e_{b_i}-1)$ belongs to the segment $[(l_{a_{i-1}},e_{b_{i-1}}), (e_{a_{i-1}},e_{b_{i-1}})]$. In particular, this means that the sinks $(l_{a_i}, e_{b_i})$ and $(l_{a_{i-1}},e_{b_{i-1}})$ are merged into one, namely $(l_{a_{i-1}},e_{b_{i-1}})$ when creating $\MM$ from $\BB_d$. 

\begin{figure}[htb]
\begin{center}
\begin{BVerbatim}[commandchars=\\\{\}]
\textcolor{cb-green}{0}
P \textcolor{cb-green}{0}
P P \textcolor{cb-green}{0}
P \textcolor{cb-green}{0} 0 0
P \textcolor{cb-green}{0} 0 0 0
P P \textcolor{cb-green}{0} 0 0 0
P P P \textcolor{cb-green}{0} 0 0 0
P P P P \textcolor{cb-green}{0} \textcolor{cb-green}{0} \textcolor{cb-green}{0} \textcolor{cb-green}{0}
P P P P P P P P \textcolor{cb-green}{0}
\end{BVerbatim}
\raisebox{13\height}{\scalebox{1}{$\rightarrow$}} \quad
\begin{BVerbatim}[commandchars=\\\{\}]
\textcolor{cb-green}{0}
P \textcolor{cb-green}{0}
P P \textcolor{cb-green}{0}
P \textcolor{cb-green}{0} 0 0
P \textcolor{cb-green}{0} 0 0 0
P P \textcolor{cb-green}{0} 0 0 0
P P P 0 0 0 0
P P P \textcolor{cb-green}{N} 0 \textcolor{cb-green}{0} \textcolor{cb-green}{0} \textcolor{cb-green}{0}
P P P P P P P P \textcolor{cb-green}{0}
\end{BVerbatim}
\raisebox{13\height}{\scalebox{1}{$\rightarrow$}} \quad
\begin{BVerbatim}[commandchars=\\\{\}]
\textcolor{cb-green}{0}
P \textcolor{cb-green}{0}
P P \textcolor{cb-green}{0}
P \textcolor{cb-green}{0} 0 0
P \textcolor{cb-green}{0} 0 0 0
P P \textcolor{cb-green}{0} 0 0 0
P P P 0 0 0 0
P \textcolor{cb-green}{N} N N 0 \textcolor{cb-green}{0} \textcolor{cb-green}{0} \textcolor{cb-green}{0}
P P P P P P P P \textcolor{cb-green}{0}
\end{BVerbatim}
\end{center}
\vspace{-9pt}
\caption{Example of two sinks merging into one. In the first step, sinks at $(3,2)$ and $(4,1)$ merge into one sink at $(3,1)$. Then the sink at $(3,1)$ moves further to (1,1).}
\end{figure}

Such a coalescence can only happen in disjoint pairs, since $(l_{a_{i-1}}+1,e_{b_{i-1}}-1)$ and $(l_{a_{i-1}}-1,e_{b_{i-1}}+1)$ must be labeled \texttt{P}. Let $r$ be the number of pairs for which this happened when $\MM$ was created from $\BB_d$. First, assume that $e_{a_1}>0$, i.e., $\mathcal{R}$ does not touch the horizontal axis. Then, $G_\MM$ has at least $(d+1)-(k-1)+r+(k-2r) = d-r+2$ sinks. Here, we assume that $G_\MM$ has no other \texttt{0}'s or \texttt{N}'s on the $(d-1)$-diagonal except the $(e_{a_i},e_{b_i})$ which is not a restriction, since such regions occur independently of each other. We now change all entries in $\mathcal{R}$ back to \texttt{P}. Additionally we set the entries $(e_{a_i},e_{b_i})$ to \texttt{P}, except $r$ non-adjacent ones, which we set to \texttt{N}. The resulting Newton diagram $G_{\MM^*}$ has $d-r+1$ sinks, i.e., less than $G_\MM$. The procedure works analogously if $e_{a_1}=0$. In this case $G_\MM$ has at least $(d+1)-k+r+(k-2r) = d-r+1$ sinks. 
This completes Step 1.

\textcolor{cb-red}{\textit{Step 2.}} Assume that $G_\MM$ has sinks only on the $d$- and $(d-1)$-diagonals. If we create $G_\MM$ from $Q_{\BB_d}$, the number of sinks can only decrease when two sinks on the $d$-diagonal merge into one sink on the $(d-1)$-diagonal. This happens when $G_\MM$ is obtained by changing an entry on the $(d-1)$-diagonal from \texttt{P} to \texttt{N}. Note that changing an entry from \texttt{P} to \texttt{0} on the $(d-1)$-diagonal cannot decrease the number of sinks. This uses the fact that $G_\MM$ must have at least one \texttt{P} on the $(d-1)$-diagonal. Consequently, in the following we study the case where \texttt{P}'s are replaced by \texttt{N}'s. Let $(a,b)$ be a point at which such an exchange takes place. That is, $(a+1,b)$ and $(a,b+1)$ are sinks in $G_{\BB_d}$, but no longer sinks in $G_\MM$. In addition, the entries at $(a+1,b-1)$ and $(a-1,b-1)$ must be  labeled \texttt{P}, otherwise $(a+1,b)$ and $(a,b+1)$ would be sources. As above, we obtain that merging two sinks on the $d$-diagonal into one can only take place in pairs, and these pairs are each disjoint. 

First, assume that $d$ is odd. If $(d,0)$ and $(0,d)$ are sinks we minimize the number of sinks by changing \texttt{P}'s to \texttt{N}'s at $(d-2,1), (d-4,3), \ldots, (1,d-2)$. The resulting Newton diagram has $\frac{d-1}{2}+2$ sinks. If $(d-1,0)$ and $(0,d-1)$ are sinks we minimize the number of sinks by changing \texttt{P}'s to \texttt{N}'s at $(d-3,2), (d-5,4), \ldots, (2,d-3)$. Note that $(d-1,1)$ and $(1,d-1)$ must be sinks. The resulting Newton diagram has $\frac{d-3}{2}+4 = \frac{d-1}{2}+3$ sinks. If $(d,0)$ and $(0,d-1)$ are sinks we minimize the number of sinks by changing \texttt{P}'s to \texttt{N}'s at $(d-2,1),(d-4,3),\ldots,(3,d-4)$. Then $(1,d-1)$ and $(2,d-2)$ are additional sinks and the remaining Newton diagram has $\frac{d-3}{2}+4 = \frac{d-1}{2}+3$ sinks. The case where $(d-1,0)$ and $(0,d)$ are sinks works analogously by symmetry. It follows that the number of sinks in $G_\MM$ is bounded below by $\frac{d-1}{2}+2$ if $\MM$ is a reduced model of odd degree $d$. 

Now assume that $d$ is even. If $(d,0)$ and $(0,d)$ are sinks, we obtain $\frac{d-2}{2}$ sinks on the $(d-1)$-diagonal and one more sink remains on the $d$-diagonal. The resulting Newton diagram has $\lceil \frac{d-1}{2} \rceil+2$ sinks. If $(d-1,0)$ and $(0,d-1)$ are sinks, then the minimal number of sinks obtained by changing \texttt{P}'s to \texttt{N}'s is $\frac{d-4}{2}+5=\lceil \frac{d-1}{2} \rceil+3$. If $(d,0)$ and $(d-1,0)$ are sinks, the minimal number of sinks obtained by changing \texttt{P}'s to \texttt{N}'s is $\frac{d-2}{2}+3=\lceil \frac{d-1}{2} \rceil+2$. In summary, the number of sinks in the Newton diagram $G_\MM$ is at least $\lceil \frac{d-1}{2} \rceil + 2$.
\end{proof}

We can now conclude the proof of Theorem~\ref{thm:degree-upper-bound} from the following result.

\begin{theorem}
Let $f_\MM\in\FF$ be defined by a reduced model $\MM\subseteq\Delta_n$ of degree $d$. Then $$d\le2n-1.$$
\end{theorem}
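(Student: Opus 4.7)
The plan is to combine the two main quantitative ingredients established in Section~\ref{sec:labeled-newton-diagrams}: an upper bound on the number of sinks coming from the support size of the model, and the lower bound on the number of sinks coming from the structure of the Newton diagram.

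More concretely, I would first invoke Corollary~\ref{cor:support-size-sinks-bound}, which says that the number of sinks in $G_\MM$ is at most the support size of $\MM$. Since $\MM\subseteq\Delta_n$ is reduced, its support has exactly $n+1$ elements (indexed by $i\in[n]$), so the number of sinks of $G_\MM$ is bounded above by $n+1$. One can equivalently phrase this by noting that by Lemma~\ref{lemma:sink-positive-coefficient} every sink of $G_\MM$ contributes a monomial with positive coefficient to $f_\MM$, and $f_\MM$ has precisely $n+1$ monomials.

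Next I would apply Proposition~\ref{prop:sinks-lower-bound}, which provides the matching lower bound: $G_\MM$ has at least $2+\lceil\frac{d-1}{2}\rceil$ sinks. Combining the two inequalities yields
\[
2+\Bigl\lceil\tfrac{d-1}{2}\Bigr\rceil \;\le\; n+1,
\]
so $\lceil\frac{d-1}{2}\rceil\le n-1$, and in particular $\frac{d-1}{2}\le n-1$, which rearranges to $d\le 2n-1$.

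There is essentially no obstacle at this stage, since all the technical work has been done in Proposition~\ref{prop:sinks-lower-bound}. The only thing to double-check is the bookkeeping on the support size: one should verify that the $n+1$ exponent pairs $(\nu_i,\mu_i)$ remain pairwise distinct after passing from the model to the polynomial $f_\MM$, which is exactly the reducedness hypothesis, and that a sink at $(a,b)$ forces the monomial $x^ay^b$ to actually appear (with positive coefficient) in $f_\MM$, which is precisely the content of Lemma~\ref{lemma:sink-positive-coefficient}. These observations close the argument.
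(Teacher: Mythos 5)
Your argument is correct and is essentially identical to the paper's proof: both combine Corollary~\ref{cor:support-size-sinks-bound} (sinks bounded above by the support size $n+1$) with Proposition~\ref{prop:sinks-lower-bound} (at least $2+\lceil\frac{d-1}{2}\rceil$ sinks) to get $d\le 2n-1$. Your extra care with the ceiling and with why reducedness gives exactly $n+1$ distinct monomials is fine but not a different route.
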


\begin{proof}
Let $\MM \subseteq \Delta_n$ be a reduced model of degree $d$. By Corollary \ref{cor:support-size-sinks-bound}, the support size of $\MM$ is at least as large as the number of sinks in $G_\MM$. Then, by Proposition \ref{prop:sinks-lower-bound}, $n+1 \ge 2+\frac{d-1}{2}$, or equivalently, $\deg (\MM) \le 2n-1$.
\end{proof}

The proof strategy of Proposition \ref{prop:sinks-lower-bound} is shown in the following example. 

\begin{example} \label{ex:Newton-diagrams-sinks}
We study the fundamental model in $\Delta_4$ given by the parameterization $$p:[0,1]\to\Delta_4, \quad t \mapsto \left(t^7, \frac{7}{2}t^5(1-t), \frac{7}{2}t(1-t), \frac{7}{2}t(1-t)^5, (1-t)^7\right).$$ The corresponding polynomial $f_\MM$ can be written using the following factor: 
\begin{align*}
g_\MM = 1&+x+y+x^2-\frac{3}{2}xy+y^2+x^3-\frac{1}{2}x^2y-\frac{1}{2}xy^2+y^3 \\
&+x^4+\frac{1}{2}x^3y-x^2y^2+\frac{1}{2}xy^3+y^4 \\
&+x^5+\frac{3}{2}x^4y-\frac{1}{2}x^3y^2-\frac{1}{2}x^2y^3+\frac{3}{2}xy^4+y^5 \\
&+x^6-x^5y+x^4y^2-x^3y^3+x^2y^4-xy^5+y^6.
\end{align*}
The Newton diagram associated to $\MM$ has five sinks, located at $(7,0)$, $(5,1)$, $(1,1)$, $(1,5)$, and $(0,7)$. It is shown in Figure \ref{fig:Newton-diagrams-sinks-example} (a). Figure \ref{fig:Newton-diagrams-sinks-example} (b) shows an ancestor of $G_\MM$, that has the same number of sinks, but only on the diagonals where $a+b=d$ and $a+b=d-1$.
\end{example}

\begin{figure}[htb]
\begin{minipage}{0.49\textwidth}
(a)
\begin{center}
\begin{BVerbatim}[commandchars=\\\{\}]
\textcolor{cb-green}{0}
P 0
P \textcolor{cb-green}{N} 0
P P P 0
P P N N 0
P N N N P 0
P \textcolor{cb-green}{N} N P P \textcolor{cb-green}{N} 0
P P P P P P P \textcolor{cb-green}{0}
\end{BVerbatim}
\end{center}
\end{minipage}
\begin{minipage}{0.49\textwidth}
(b)
\begin{center}
\begin{BVerbatim}[commandchars=\\\{\}]
\textcolor{cb-green}{0}
P 0
P \textcolor{cb-green}{N} 0
P P P 0
P P P \textcolor{cb-green}{N} 0
P P P P P 0
P P P P P \textcolor{cb-green}{N} 0
P P P P P P P \textcolor{cb-green}{0}
\end{BVerbatim}
\end{center}
\end{minipage}
\\
\caption{Newton diagrams associated to the model considered in Example \ref{ex:Newton-diagrams-sinks}. The Newton diagram in (b) is an ancestor of the diagram shown in (a). Sinks are highlighted in green.}\label{fig:Newton-diagrams-sinks-example}
\end{figure}

\section{Composition of Fundamental Models} \label{sec:composition-fundamental-models} 

Every reduced model can be constructed from fundamental models by taking a \emph{composite} of fundamental models \cite[Proposition 2.9]{ClassifyingOneDimensionalDiscreteModelsWithMaximumLikelihoodDegreeOne}. We introduce a similar construction that allows us to construct a fundamental model from other fundamental models. Following Bik and Marigliano \cite{ClassifyingOneDimensionalDiscreteModelsWithMaximumLikelihoodDegreeOne}, we represent any reduced model by a function $h:\ZZ^2\to\RR_{\ge0}$ that sends an exponent pair $(\nu_i,\mu_i)$ to its associated coefficient $c_i$. The support of $\MM$~equals~$\textup{supp}(h)$. 

\begin{definition}
Let $\MM_1$ and $\MM_2$ be reduced models represented by $h_1, h_2:\ZZ^2\to\RR_{\ge0}$ and $(a,b)\in\textup{supp}(h_1)$. The \emph{composition} of $\MM_1$ and $\MM_2$ at $(a,b)$ with parameter $0<\eta <  h_1(a,b)$ is the reduced model represented by $$h_\text{comp}^\eta:\ZZ^2\to\RR_{\ge0}, \, (\nu,\mu) \mapsto \hspace*{-3.5pt}\begin{cases} h_1(\nu,\mu), & \hspace*{-3.5pt}\textup{if } \nu<a \text{ or } \mu<b, \\ h_1(\nu,\mu)-\eta, & \hspace*{-3.5pt}\textup{if } (\nu,\mu)=(a,b), \\ h_1(\nu,\mu)+\eta \cdot h_2(\nu-a,\mu-b), & \hspace*{-3.5pt}\textup{else.}\end{cases}$$
\end{definition}

\begin{lemma} \label{lemma:composition-model}
The composition of reduced models is indeed a reduced model.
\end{lemma}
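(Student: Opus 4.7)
My plan is to check the three defining properties of a reduced R1d model for the data encoded by $h_{\textup{comp}}$: nonnegativity of the coefficients, the simplex constraint that $p_0+p_1+\ldots+p_n=1$ on $[0,1]$, and the condition that the support consists of pairwise distinct pairs avoiding the origin.

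The heart of the argument will be the polynomial identity equivalent to the simplex constraint. I would split the expansion of $f_{\MM_{\textup{comp}}}(x,y) = \sum h_{\textup{comp}}(\nu,\mu)\,x^\nu y^\mu$ according to the three cases in the definition. The contributions carrying $h_1$ reassemble to $f_{\MM_1}(x,y)$. The remaining $-x^a y^b$ at $(a,b)$ together with the shifted terms $\sum_{\nu\ge a,\,\mu\ge b,\,(\nu,\mu)\ne(a,b)} h_2(\nu-a,\mu-b)\,x^\nu y^\mu$ will simplify, after reindexing by $(\nu',\mu')=(\nu-a,\mu-b)$ and using that $h_2(0,0)=0$ since $\MM_2$ is reduced, to $x^a y^b\bigl(f_{\MM_2}(x,y)-1\bigr)$. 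This yields
\[ f_{\MM_{\textup{comp}}}(x,y) \;=\; f_{\MM_1}(x,y) + x^a y^b\bigl(f_{\MM_2}(x,y)-1\bigr). \]
Specializing to $y=1-x$ and invoking Lemma \ref{lemma:homogeneous-polynomial}(i) for both $\MM_1$ and $\MM_2$ collapses the bracketed factor to zero, giving $f_{\MM_{\textup{comp}}}(x,1-x)=1$.

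The support conditions I expect to handle routinely: distinctness is automatic since $h_{\textup{comp}}$ is set-theoretically a function on $\ZZ^2$, and for the origin I would observe that $(a,b)\in\textup{supp}(h_1)$ forces $(a,b)\ne(0,0)$ because $\MM_1$ is reduced, so at least one of $a,b$ is strictly positive; hence the first branch of the definition applies at $(0,0)$, giving $h_{\textup{comp}}(0,0)=h_1(0,0)=0$.

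The one delicate point, and what I expect to be the main obstacle, is nonnegativity of the coefficient at $(a,b)$, namely $h_{\textup{comp}}(a,b)=h_1(a,b)-1$. Branches one and three are manifestly nonnegative from $h_1,h_2\ge 0$, but the middle branch requires $h_1(a,b)\ge 1$. This inequality is implicit in the declaration $h_{\textup{comp}}\colon \ZZ^2\to\RR_{\ge 0}$ and will need to be enforced as a standing hypothesis on the composition point; granted this, all three branches return nonnegative values and the verification is complete.
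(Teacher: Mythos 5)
Your proposal is correct and follows essentially the same route as the paper: both rewrite the composed polynomial as $f_{\MM_1} + x^a y^b\,(f_{\MM_2}-1)$ and conclude from $f_{\MM_1}=f_{\MM_2}=1$ on the line $x+y=1$. Your extra attention to the coefficient at $(a,b)$ requiring $h_1(a,b)\ge 1$ is a detail the paper leaves implicit in the codomain $\RR_{\ge 0}$ (and which holds in all its later uses, where $h_1(d,0)=1$), so it is a welcome clarification rather than a divergence.
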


\begin{proof}
Assume $\MM_1$ is represented by $(c_i, \nu_i,\mu_i)_{i=0}^{n-1}\cup(c_n,a,b)$ and let $f_{\MM_2}$ be the polynomial given by $\MM_2$. The polynomial associated with the composition of $\MM_1$ and $\MM_2$ at $(a,b)$ is $$f_{\text{comp}}^\eta\coloneqq c_0x^{\nu_0}y^{\mu_0}+\ldots+c_{n-1}x^{\nu_{n-1}}y^{\mu_{n-1}}+(c_n-\eta)x^ay^b+\eta x^ay^b f_{\MM_2} \in \RR[x,y].$$ Note that $c_n-\eta \geq 0$ since $(c_n,a,b)$ is part of the sequence representation of $\MM_1$ and by the choice of $\eta$. 
Since $\MM_2$ is a model we have $f_{\MM_2}(x,y)=1$ on the line $x+y=1$. Hence, $f_{\text{comp}}^\eta(x,y)=f_{\MM_1}(x,y)=1$ on the line $x+y=1$. Since $h_\textup{comp}(\nu,\mu)\ge0$ for all $(\nu,\mu)\in\ZZ^2$ the composition is a model. By construction, the composition of $\MM_1$ and $\MM_2$ at $(a,b)$ is reduced. 
\end{proof}

A vivid way to present fundamental models is through \emph{chip configurations}. For a model of degree~$d$, consider an integer grid of size $d+1$. The coordinates on that grid represent the possible supports. Each coordinate is given by an integer entry that corresponds to the associated coefficient. For simplicity, we write a dot for a zero entry. Furthermore, we write $-1$ at the point $(0, 0)$ to indicate that the coordinates of the parametrization add up to one. We illustrate the composition of two models and their chip configurations in the following example.

\begin{example} \label{ex:composition}
Let $\MM_1$ and $\MM_2$ be fundamental models in $\Delta_2$ such that $h_1(3,0)=h_1(0,3)=1$, $h_1(1,1)=3$, and $h_2(2,0)=h_2(0,2)=1$, $h_2(1,1)=2$. Their composition at $(3,0)$ with $\eta=1$ is given by $h_\text{comp}^1(0,3)=h_\text{comp}^1(3,2)=h_\text{comp}^1(5,0)=1$, $h_\text{comp}^1(1,1)=3$ and $h_\text{comp}^1(4,1)=2$. The chip configurations for the three models are shown in Figure \ref{fig:chip-configuration-composition}.
\end{example}

\begin{figure}[htb]
\begin{minipage}{0.32\textwidth}
(a)
\begin{center}
\begin{BVerbatim}


 1
 . .
 . 3 .
-1 . . 1
\end{BVerbatim}
\end{center}
\end{minipage}
\begin{minipage}{0.32\textwidth}
(b)
\begin{center}
\begin{BVerbatim}



 1
 . 2
-1 . 1
\end{BVerbatim}
\end{center}
\end{minipage}
\begin{minipage}{0.32\textwidth}
(c)
\begin{center}
\begin{BVerbatim}
 .
 . .
 1 . .
 . . . 1
 . 3 . . 2
-1 . . . . 1
\end{BVerbatim}
\end{center}
\end{minipage}
\vspace{5pt}
\caption{Chip configurations of fundamental models. In (a) and (b), the chip configurations of $\MM_1$ and $\MM_2$ from Example \ref{ex:composition} are shown, while (c) illustrates their composition.} \label{fig:chip-configuration-composition}
\end{figure}

The following result is crucial for our further discussion on fundamental models. 

\begin{theorem} \label{thm:composition-fundamental-models}
Let $\MM_1$ be a fundamental model in $\Delta_{n_1}$ of degree $d_1$ with $h_1(d_1,0)=1$, and let $\MM_2$ be any fundamental model in $\Delta_{n_2}$ of degree $d_2$. The composition of $\MM_1$ and $\MM_2$ at $(d_1,0)$ with $\eta=1$ is a fundamental model in $\Delta_{n_1+n_2}$ of degree $d_1+d_2$. 
\end{theorem}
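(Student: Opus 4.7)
The plan is to handle two structural claims (support size and degree) and then the fundamentality property, which will reduce via a linear-algebra argument to the fundamentality of $\MM_1$ and $\MM_2$ individually.

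First, I would unpack the composition formula at $(d_1, 0)$. Since $\MM_1$ has degree $d_1$, any $(\nu,\mu) \in \textup{supp}(h_1)$ with $\nu \ge d_1$ forces $\nu = d_1$ and $\mu = 0$, so the third case of the definition of $h_\textup{comp}$ collapses to $h_2(\nu-d_1,\mu)$ on the shifted copy of $\textup{supp}(h_2)$. This gives the disjoint union
$$\textup{supp}(h_\textup{comp}) \;=\; \bigl(\textup{supp}(h_1)\setminus\{(d_1,0)\}\bigr) \;\sqcup\; \{(d_1+\nu,\mu) : (\nu,\mu)\in\textup{supp}(h_2)\},$$
with $n_1 + (n_2+1) = n_1 + n_2 + 1$ points; disjointness holds since points in the first piece have $\nu < d_1$ while points in the second have $\nu \ge d_1$ with $(\nu,\mu) \neq (d_1,0)$, as $(0,0)\notin \textup{supp}(h_2)$. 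The maximum exponent-sum $d_1+d_2$ is attained on the shifted piece, and reducedness persists because $(0,0)$ lies in neither piece.

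For fundamentality, let $\tilde h : \ZZ^2 \to \RR$ be any function supported on $\textup{supp}(h_\textup{comp})$ with $\sum \tilde h(\nu,\mu)\, t^\nu(1-t)^\mu = 1$. I would split $\tilde h$ into $\tilde h_1'$ on $S_1 := \textup{supp}(h_1)\setminus\{(d_1,0)\}$ and $\tilde h_2$ on $\textup{supp}(h_2)$ via the shift $(\nu,\mu) \mapsto (\nu+d_1,\mu)$. Setting $A(t) := \sum_{S_1} \tilde h_1'(\nu,\mu)\, t^\nu(1-t)^\mu$ and $B(t) := \sum_{\textup{supp}(h_2)} \tilde h_2(\nu,\mu)\, t^\nu(1-t)^\mu$, the defining equation reads $A(t) + t^{d_1}B(t) = 1$. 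Since $\nu+\mu \le d_1$ on $S_1$, we have $\deg A \le d_1$, so the identity $1 - A = t^{d_1}B$ forces $B$ to be a constant $b$ and $A = 1 - b\, t^{d_1}$.

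The main obstacle is pinning down $b = 1$; this is where the fundamentality of $\MM_1$ enters. Fundamentality of $\MM_1$ is equivalent to linear independence of $\{t^\nu(1-t)^\mu : (\nu,\mu)\in\textup{supp}(h_1)\}$ in $\RR[t]$, since uniqueness of the $+1$-expansion is exactly injectivity of the associated linear map. Using $\sum_{S_1} h_1(\nu,\mu)\, t^\nu(1-t)^\mu = 1 - t^{d_1}$ (which follows from the $\MM_1$-expansion of $1$ together with $h_1(d_1,0)=1$), the equation $A = 1 - b\,t^{d_1}$ rearranges to the linear relation
$$\sum_{S_1} (\tilde h_1' - h_1)(\nu,\mu)\, t^\nu(1-t)^\mu \;-\; (1-b)\, t^{d_1} \;=\; 0$$
among the family $\{t^\nu(1-t)^\mu : (\nu,\mu)\in\textup{supp}(h_1)\}$. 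Linear independence forces $b = 1$ and $\tilde h_1' = h_1|_{S_1}$. Feeding $b = 1$ back gives $B(t) = 1$, and fundamentality of $\MM_2$ then yields $\tilde h_2 = h_2$, hence $\tilde h = h_\textup{comp}$, proving that the composition is fundamental.
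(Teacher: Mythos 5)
Your proof is correct, and while it performs the same underlying reduction as the paper (decoupling the uniqueness problem for the composition into the uniqueness problems for $\MM_1$ and $\MM_2$, with the coefficient $h_1(d_1,0)=1$ acting as the pivot), the mechanism is genuinely different. The paper writes out the sum-to-one constraint as an explicit block linear system and invokes a sequence of elementary row operations, asserted to exist because $\MM_1$'s system forces $c_{n_1}=1$, to bring the combined matrix into block-diagonal form; the decoupling there is somewhat informal. You instead achieve the decoupling by a degree count: writing the constraint as $A(t)+t^{d_1}B(t)=1$ with $\deg A\le d_1$ forces $B$ to be a constant $b$, and then the linear-independence characterization of fundamentality (the same one the paper uses in the proof of Lemma~\ref{lemma:sharp-model-fundamental}, following Bik--Marigliano) applied to $\textup{supp}(h_1)$ pins down $b=1$ and $\tilde h_1'=h_1|_{S_1}$, after which $B\equiv 1$ hands the rest to the fundamentality of $\MM_2$. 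This buys a cleaner and more rigorous argument --- the step the paper leaves implicit (why the row operations leave the $\MM_2$-block untouched and produce exactly the $\MM_2$ system) is replaced by the transparent observation that $t^{d_1}B(t)$ cannot contribute below degree $d_1$. Your support computation at the start (disjoint union of $\textup{supp}(h_1)\setminus\{(d_1,0)\}$ and the shifted $\textup{supp}(h_2)$, with no cancellation since all added coefficients are nonnegative) correctly justifies that the composition lives in $\Delta_{n_1+n_2}$ and has degree $d_1+d_2$, matching the paper's appeal to Lemma~\ref{lemma:composition-model}. No gaps.
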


\begin{proof}
By construction and Lemma \ref{lemma:composition-model}, the composition of $\MM_1$ and $\MM_2$ is a reduced model in $\Delta_{n_1+n_2}$ of degree $d_1+d_2$. It remains to show that the obtained model is fundamental. Let $\{ (d_1,0) \} \cup \{(\nu_i,\mu_i)\}_{i=0}^{n_1-1}$ be the support of $\MM_1$ and $\{ (\sigma_i,\tau_i) \}_{i=0}^{n_2}$ be the support of $\MM_2$. Since $\MM_1$ is fundamental, its scalings are uniquely determined by the constraint $$c_0t^{\nu_0}(1-t)^{\mu_0}+\ldots+c_{n_1-1}t^{\nu_{n_1-1}}(1-t)^{\mu_{n_1-1}}+c_{n_1}t^{d_1}=1.$$ Considering the expanded form, the coefficients are linear functions in the scalings $c_0,\ldots,c_{n_1}$. The above constraint holds for all $t\in[0,1]$ when the constant term is one and all other coefficients of the expanded form are zero. This results in a system of linear equations of the form $$\begin{bmatrix} \star & \ldots & \star & 0 \\ \vdots & \ddots & \vdots & \vdots \\ \star & \ldots & \star & 0 \\ \star & \ldots & \star & 1 \end{bmatrix} \cdot \begin{bmatrix} c_0 \\ \vdots \\ c_{n_1-1} \\ c_{n_1} \end{bmatrix} = \begin{bmatrix} 1 \\ \vdots \\ 0 \\ 0 \end{bmatrix}.$$ Since $\MM_1$ is fundamental with $h_1(d_1,0)=1$, this system has a unique solution with $c_{n_1}=1$. In particular, there exists a finite sequence $(\mathcal{R}_i)_i$ of elementary row operations such that the last equation is $c_{n_1}=1$. Analogously, the scalings of $\MM_2$ are uniquely determined by $$a_0t^{\sigma_0}(1-t)^{\tau_0}+\ldots+a_{n_2}t^{\sigma_{n_2}}(1-t)^{\tau_{n_2}}=1.$$ To show that the composition of $\MM_1$ and $\MM_2$ is fundamental, we consider the constraint 
\begin{align*}
&c_0t^{\nu_0}(1-t)^{\mu_0}+\ldots+c_{n_1-1}t^{\nu_{n_1-1}}(1-t)^{\mu_{n_1-1}} \\
&+t^{d_1} [a_0t^{\sigma_0}(1-t)^{\tau_0}+\ldots+a_{n_2}t^{\sigma_{n_2}}(1-t)^{\tau_{n_2}}]=1.
\end{align*}
Studying the expanded form yields a system of linear equations of the form $$\begin{bmatrix} \star & \ldots & \star & 0 & \ldots & 0 \\ \vdots & \ddots & \vdots & \vdots & \ddots & \vdots \\ \star & \ldots & \star & 0 & \ldots & 0 \\ \star & \ldots & \star & \star & \ldots & \star \\ 0 & \ldots & 0 & \star & \ldots & \star \\ \vdots & \ddots & \vdots & \vdots & \ddots & \vdots \\ 0 & \ldots & 0 & \star & \ldots & \star \end{bmatrix} \cdot \begin{bmatrix} c_0 \\ \vdots \\ c_{n_1-1} \\ a_0 \\ \vdots \\ a_{n_2} \end{bmatrix} = \begin{bmatrix} 1 \\ \vdots \\ 0 \\ 0 \\ 0 \\ \vdots \\ 0 \end{bmatrix},$$ where the upper left block is given by the first $n_1-1$ columns of the coefficient matrix of $\MM_1$ and the lower right block is the coefficient matrix of $\MM_2$. Performing the elementary row operations $(\mathcal{R}_i)_i$ on the first $d_1+1$ equations yields a system of the form $$\begin{bmatrix} \star & \ldots & \star & 0 & \ldots & 0 \\ \vdots & \ddots & \vdots & \vdots & \ddots & \vdots \\ \star & \ldots & \star & 0 & \ldots & 0 \\ 0 & \ldots & 0 & \star & \ldots & \star \\ 0 & \ldots & 0 & \star & \ldots & \star \\ \vdots & \ddots & \vdots & \vdots & \ddots & \vdots \\ 0 & \ldots & 0 & \star & \ldots & \star \end{bmatrix} \cdot \begin{bmatrix} c_0 \\ \vdots \\ c_{n_1-1} \\ a_0 \\ \vdots \\ a_{n_2} \end{bmatrix} = \begin{bmatrix} \star \\ \vdots \\ \star \\ 1 \\ 0 \\ \vdots \\ 0 \end{bmatrix}.$$ In particular, the last equations are exactly the equations of $\MM_2$. Since $\MM_1$ and $\MM_2$ are fundamental, the system has a unique solution, so that the composition is fundamental.
\end{proof}

\begin{remark}
    As motivated by Theorem~\ref{thm:composition-fundamental-models}, in the remainder of the paper we will only consider the composition of $\MM_1$ and $\MM_2$ in the special case where the composition takes place at $(d,0)\in\textup{supp}(h_1)$ and $\eta=h_1(d,0)=1$. That will be the setup whenever we refer in the following sections to the composition of two models. It would be interesting to explore whether Theorem~\ref{thm:composition-fundamental-models} can be extended to more general compositions.
\end{remark}

\section{Sharp Models} \label{sec:sharp-models}

The composition of fundamental models described in Theorem \ref{thm:composition-fundamental-models} requires the first model to have $(d,0)$ in its support if the model has degree $d$. As we will see, this is the case for all models in $\Delta_n$ whose degree is given by the upper bound $2n-1$. We call these models \emph{sharp}.

\begin{definition}
A reduced model in $\Delta_n$  is \emph{sharp} if its degree is $2n-1$, and \emph{almost sharp} if its degree is $2n-2$. We write \emph{(almost) sharp} when referring to both cases.
\end{definition}

It immediately follows that a sharp model has odd degree, while an almost sharp model has even degree. By \cite[Theorem 0]{DegreeEstimatesForPolynomialsConstantOnAHyperplane}, there always exists a sharp model. A well-known example is the model given by the following parameterization $p:[0,1] \to \Delta_n$, see e.g. \cite[Section~2]{PolynomialProperMapsBetweenBalls}: 
\begin{equation} \label{eq:example-sharp-model}
t \mapsto \left(t^{2n-1}, \left(\frac{2n-1}{2i+1} \binom{n+i-1}{2i} t^{n-i-1}(1-t)^{2i+1}\right)_{i=0}^{n-1}\right).
\end{equation} 

By \cite[Lemma 8.2]{ClassifyingOneDimensionalDiscreteModelsWithMaximumLikelihoodDegreeOne}, this is indeed a reduced model in $\Delta_n$ of degree $2n-1$ since the coordinates add up to one. For completeness, we give the proof that this is a fundamental model, which originates from Bik and Marigliano \cite[Example~2.14]{ClassifyingOneDimensionalDiscreteModelsWithMaximumLikelihoodDegreeOne-arXiv}. 

\begin{lemma} \label{lemma:sharp-model-fundamental}
The model given by the parameterization \eqref{eq:example-sharp-model} is fundamental.
\end{lemma}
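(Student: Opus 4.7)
The plan is to show that the defining polynomial identity
$$c_0\, t^{2n-1} + \sum_{j=1}^{n} c_j\, t^{n-j}(1-t)^{2j-1} \;=\; 1,$$
in the unknowns $(c_0, c_1, \ldots, c_n)$, expands into a linear system that, after reordering, is triangular with unit diagonal, and hence has at most one solution. Since \cite[Lemma 8.2]{ClassifyingOneDimensionalDiscreteModelsWithMaximumLikelihoodDegreeOne} guarantees that the specific scalings in \eqref{eq:example-sharp-model} do satisfy the constraint, proving uniqueness is equivalent to fundamentality.

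First I would apply the binomial theorem to each $(1-t)^{2j-1}$ and read off the coefficient of $t^k$ on the left-hand side: for $j \geq 1$ the $j$-th term contributes $c_j \binom{2j-1}{k-n+j}(-1)^{k-n+j}$ whenever $n-j \leq k \leq n+j-1$ and nothing otherwise, while the spike term $c_0\, t^{2n-1}$ contributes only at $k=2n-1$. The decisive observation is that for each $k\in\{0,1,\ldots,n-1\}$ the upper bound $k\le n+j-1$ is vacuous for $j\ge 1$, so only the lower bound $j\ge n-k$ is active; the $k$-th equation therefore involves only $c_{n-k}, c_{n-k+1},\ldots, c_n$, and the leading unknown $c_{n-k}$ carries coefficient $\binom{2(n-k)-1}{0}(-1)^0=1$. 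Ordering the unknowns as $c_n, c_{n-1},\ldots, c_1$ and stacking the equations for $k=0,1,\ldots,n-1$ produces a lower-triangular system with unit diagonal; solving it sequentially yields, for example, $c_n = 1$ from $k=0$, then $c_{n-1}=(2n-1)\, c_n$ from $k=1$, and so on, uniquely fixing $c_1,\ldots,c_n$. Finally, the coefficient of $t^{2n-1}$ reads $c_0 - c_n = 0$, so $c_0 = 1$ is also uniquely determined. The remaining equations for $k = n, n+1, \ldots, 2n-2$ are automatically consistent because the parameterization \eqref{eq:example-sharp-model} furnishes an explicit solution.

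The main obstacle I expect is the purely combinatorial bookkeeping: carefully verifying, for each $k<n$, both that the upper inequality is indeed vacuous (so $c_0$ never enters prematurely) and that the leading binomial $\binom{2n-2k-1}{0}$ is $1$. The underlying structural reason the proof works is the strict monotonicity $\nu_1>\nu_2>\cdots>\nu_n$ of the first coordinates of the non-spike supports, which ensures that each successive equation introduces exactly one new variable, while the spike support $(2n-1,0)$ stays decoupled from $c_1,\ldots,c_n$ until the very last equation pins down $c_0$.
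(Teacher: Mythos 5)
Your proof is correct. The key computations all check out: for $k\le n-1$ the term $c_j t^{n-j}(1-t)^{2j-1}$ contributes to the coefficient of $t^k$ only when $j\ge n-k$, the new unknown $c_{n-k}$ enters with coefficient $\binom{2(n-k)-1}{0}=1$, the spike $c_0t^{2n-1}$ stays out of these equations, and the top-degree equation reads $c_0-c_n=0$ since $\deg\bigl(t^{n-j}(1-t)^{2j-1}\bigr)=n+j-1<2n-1$ for $j<n$. Together with the existence of a solution (the paper's citation of Lemma 8.2 just before the statement), this gives uniqueness of the scalings, which is exactly fundamentality.

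Your route differs from the paper's in a way worth noting. The paper invokes the characterization that a reduced model is fundamental if and only if the polynomials $t^{\nu_i}(1-t)^{\mu_i}$ are linearly independent, and proves independence by repeatedly evaluating at $t=1$ and dividing by $1-t$ — i.e., it exploits the strictly increasing orders of vanishing at $t=1$ coming from the exponents $\mu_i=2i+1$. You instead work directly with the inhomogeneous system from the definition and triangularize it by reading off Taylor coefficients at $t=0$, exploiting the strictly decreasing exponents $\nu_j=n-j$; the two arguments are dual, pivoting at opposite endpoints of $[0,1]$. Your version has the mild advantage of bypassing the linear-independence lemma and of producing the scalings explicitly along the way ($c_n=1$, $c_{n-1}=2n-1$, $c_0=1$, consistent with the closed form in \eqref{eq:example-sharp-model}); the paper's version is slightly shorter because the homogeneous formulation lets it ignore the right-hand side and the consistency of the remaining equations entirely. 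Both proofs ultimately rest on the same staircase structure of the support.
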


\begin{proof}
The proof uses the fact that every reduced model represented by a sequence $(c_i,\nu_i,\mu_i)_{i=0}^n$ is fundamental if and only if the vector subspace $\textup{span}_\RR(\{ t^{\nu_i}(1-t)^{\mu_i} \}_{i\in[n]})$ of $\RR[t]$ has dimension $n+1$ \cite[Definition~2.11]{ClassifyingOneDimensionalDiscreteModelsWithMaximumLikelihoodDegreeOne-arXiv}.

For $c, c_0, c_1, \ldots, c_{n-1} \in \RR$ we define $f(t)$ to be the following polynomial: $$c t^{2n-1} + c_0 t^{n-1} (1-t) + c_1 t^{n-2}(1-t)^3+\ldots+c_{n-2}t(1-t)^{2n-3}+c_{n-1}(1-t)^{2n-1}.$$ We prove that $f(t)=0$ implies $c=c_0=\ldots=c_{n-1}=0$. For $i \in [n-1]$ we define $$g_i(t)\hspace{-1pt}= \hspace{-1pt}c_it^{n-i-1}+c_{i+1}t^{n-i-2}(1-t)^2+\ldots+c_{n-2}t(1-t)^{2(n-i-2)}+c_{n-1}(1-t)^{2(n-i-1)}.$$ Considering $f(1)=0$ yields $c=0$. It follows that $g_0=f/(1-t)$ is the zero polynomial. Next, for $i\in[k-1]$, we have $c_i=g_i(1)=0$ and therefore $g_{i+1}=g_i/(1-t)^2=0$. Finally, we obtain $c_k=g_k(1)=0$.
\end{proof}

Motivated by CR geometry, Lebl and Lichtblau \cite[Section 3]{UniquenessOfCertainPolynomialsConstantOnALine} study in detail valuable properties of the polynomials $f_\MM$ defined by sharp models. In particular, all polynomials defined by sharp models up to degree 17 are listed in \cite[Table 1]{UniquenessOfCertainPolynomialsConstantOnALine}. We summarize their results in the following lemma in the algebraic statistics context. In this way, the complexity of searching for sharp models can be reduced by restricting~possible~supports.

\begin{lemma} \label{lemma:sharp-models-properties}
Let $\mathcal{M}$ be a sharp model of degree $d$. 
\begin{itemize}
\item[\textup{(i)}] The support of $\mathcal{M}$ contains $(d,0)$ and $(0,d)$.
\item[\textup{(ii)}] The support of $\mathcal{M}$ does not contain any other $(a,b)$ with $a+b=d$.
\item[\textup{(iii)}] The support of $\mathcal{M}$ does not contain $(k,0)$, $(0,k)$ for $k\in \{1,\ldots,d-1\}$.
\item[\textup{(iv)}] The support of $\mathcal{M}$ contains at least one element $(a,b)$ with $a+b=d-1$.
\item[\textup{(v)}] The support of $\mathcal{M}$ does not contain elements $(j,d-1-j)$ for even $j$.
\end{itemize}
\end{lemma}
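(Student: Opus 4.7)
The plan is to exploit the tightness of the sink bound from Proposition \ref{prop:sinks-lower-bound}. Since $\MM$ is sharp with $d = 2n-1$ and $|\textup{supp}(\MM)| = n+1$, combining Corollary \ref{cor:support-size-sinks-bound} with Proposition \ref{prop:sinks-lower-bound} yields
\[ n+1 = |\textup{supp}(\MM)| \geq |\textup{sinks}(G_\MM)| \geq \left\lceil \tfrac{d-1}{2} \right\rceil + 2 = n+1, \]
so equality holds throughout. By Lemma \ref{lemma:sink-positive-coefficient} every sink of $G_\MM$ lies in $\textup{supp}(\MM)$, and the cardinality identity then forces the two sets to coincide. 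All five assertions therefore become structural claims about this common set, and my strategy is to read them off the Newton diagram.

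First I would prove (i) and (iii) together. By Lemma \ref{lemma:sinks-axes} the only sinks on the axes are $(A, 0)$ and $(0, B)$ with $A, B \in \{1, \ldots, d\}$, so it suffices to show $A = B = d$. Revisiting Step 2 of the proof of Proposition \ref{prop:sinks-lower-bound}, for odd $d$ the bound $n+1$ is attained exclusively in the configuration $(A, B) = (d, d)$; any other axis configuration forces at least $n+2$ sinks. If $G_\MM$ already has sinks off the top two diagonals, Step 1 of that proof produces a diagram with the same number of sinks, now all on the $d$- and $(d-1)$-diagonals, and I would check that its modifications preserve the axial sink positions (since the regions of zero and negative entries there can touch at most one axis), so that the Step 2 conclusion transfers back to $G_\MM$ itself.

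For (ii) I would analyze $[x^d]\,f_\MM(x, 1-x) = 0$, which only involves the top-degree part $f_d$ and becomes $\sum_{(a, b) \in \textup{supp},\, a+b=d} (-1)^b c_{a,b} = 0$. Any additional $d$-diagonal support element $(a_0, d - a_0)$ with $0 < a_0 < d$ would itself have to be a sink, and tracking the sign patterns it forces on its neighbors on the $(d-1)$-diagonal produces an extra sink, contradicting $|\textup{sinks}(G_\MM)| = n+1$. For (iv) I would argue by contradiction: if no support element lies on the $(d-1)$-diagonal then $f_{d-1} = 0$, and by (i) and (ii) the top part is $f_d = c_{d,0}\,x^d + c_{0,d}\,y^d$; since only $f_d$ contributes to $[x^{d-1}]\,f_\MM(x, 1-x)$, that coefficient evaluates to $c_{0,d} \cdot d \cdot (-1)^{d-1} = c_{0,d}\,d$ for odd $d$, forcing $c_{0,d} = 0$ and contradicting (i).

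Finally, for (v) I would suppose $(j_0, d-1-j_0) \in \textup{supp}$ for some even $j_0$. Since this point must be a sink, I would analyze the permitted sink subdiagrams at even-indexed positions on the $(d-1)$-diagonal, using that (iii) forces $g_{k, 0}, g_{0, k} > 0$ for $0 \leq k \leq d-1$. The expected outcome is that an even-$j_0$ sink either creates a source among its $(d-2)$-diagonal neighbors, contradicting Lemma \ref{lemma:unique-source}, or introduces an additional sink, violating $|\textup{sinks}(G_\MM)| = n+1$. The main obstacle I anticipate is precisely (v), together with the technical verification in (i) and (iii) that the Step 1 reduction preserves axial information; both amount to the careful sign-pattern tracking carried out by Lebl and Lichtblau in \cite[Section 3]{UniquenessOfCertainPolynomialsConstantOnALine}, here recast in the Newton-diagram language of Section \ref{sec:labeled-newton-diagrams}.
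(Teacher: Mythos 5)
The paper does not actually prove this lemma---its ``proof'' is a one-line citation to \cite[Section 3]{UniquenessOfCertainPolynomialsConstantOnALine}---so your attempt to reconstruct the arguments inside the paper's own Newton-diagram framework is genuinely a different (and more informative) route. Your opening observation is correct and is the right engine: for a sharp model $d=2n-1$ gives $\lceil\frac{d-1}{2}\rceil+2=n+1$, so Corollary \ref{cor:support-size-sinks-bound} and Proposition \ref{prop:sinks-lower-bound} pinch to equality and, via Lemma \ref{lemma:sink-positive-coefficient}, the sinks of $G_\MM$ coincide (up to the $x\leftrightarrow y$ transposition in the definition of $f_\MM$) with the support. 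Your argument for (iv) is also complete and elementary: granting (i) and (ii), the coefficient of $x^{d-1}$ in $f_\MM(x,1-x)\equiv 1$ receives only the contribution $c_{0,d}\,d\,(-1)^{d-1}$, forcing $c_{0,d}=0$, a contradiction. This is cleaner than anything in the paper.

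The gaps are in (i)/(iii), (ii) and (v). For (i)/(iii), the claim that the Step~1 reduction of Proposition \ref{prop:sinks-lower-bound} ``preserves the axial sink positions'' is not true as stated: if a region of \texttt{0}'s and \texttt{N}'s connected to the $(d-1)$-diagonal reaches the horizontal axis, then the axial sink sits at some $(A,0)$ with $A<d$, and resetting that region to \texttt{P} moves the axial sink back out to $(d,0)$. So the Step~2 case analysis applies to $\MM^*$ but does not automatically transfer to $G_\MM$; to rule out $A<d$ you must argue directly that such a configuration costs at least one extra sink (this is essentially \cite[Lemma 3.1]{UniquenessOfCertainPolynomialsConstantOnALine}, and the paper's Step 1.2 count $d-r+1$ versus $d-r+2$ is the place where the bookkeeping happens). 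For (ii) and (v) you state the expected outcome of a sign-pattern analysis (``produces an extra sink'', ``creates a source'') without exhibiting it; since these are exactly the delicate parity arguments in Lebl--Lichtblau, deferring to them at the end means your proof, like the paper's, is ultimately a citation for the hardest three of the five claims. The skeleton is right, but (i), (ii), (iii) and (v) are not yet proved by what you wrote.
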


\begin{proof}
A proof for each statement can be found in \cite[Section 3]{UniquenessOfCertainPolynomialsConstantOnALine}.
\end{proof}

We can further derive the following statement from \cite[Section 4]{UniquenessOfCertainPolynomialsConstantOnALine}. This generalizes Corollary \ref{cor:finiteness-fundamental-models} to reduced models of degree $2n-1$ and $2n-2$, respectively.

\begin{proposition}[{\cite[Proposition 4.2]{UniquenessOfCertainPolynomialsConstantOnALine}}] 
No two (almost) sharp models have the same support. In particular, there can be at most finitely many (almost) sharp models.
\end{proposition}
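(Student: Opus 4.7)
The strategy is to prove the stronger statement that every (almost) sharp model is fundamental. Granting this, if two (almost) sharp models $\MM, \MM' \subseteq \Delta_n$ shared a common support $S$, Definition \ref{def:fundamental-model} would force their coefficients to coincide, yielding $\MM = \MM'$. The finiteness claim then follows verbatim from the reasoning of Corollary \ref{cor:finiteness-fundamental-models}: the support of every (almost) sharp model is a subset of size $n+1$ of the finite lattice triangle $\{(\nu, \mu) \in \ZZ_{\ge 0}^2 : \nu + \mu \le 2n-1\}$, so there are only finitely many possibilities.

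To establish fundamentality, assume two reduced models $\MM, \MM' \subseteq \Delta_n$ share the support $S$ and the degree $d \in \{2n-2, 2n-1\}$, and set $h := f_\MM - f_{\MM'} \in \RR[x, y]$. Then $\mathrm{supp}(h) \subseteq S$, $\deg h \le d$, and $h(x, 1-x) \equiv 0$ because both $f_\MM$ and $f_{\MM'}$ evaluate to $1$ on the line $x+y=1$. Hence $h = (x+y-1)\, q$ for some $q \in \RR[x, y]$ of degree at most $d - 1$, and the goal is to conclude $q \equiv 0$.

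Comparing coefficients in $h = (x+y-1)q$ gives the recurrence $h_{a,b} = q_{a-1, b} + q_{a, b-1} - q_{a, b}$ (with the convention $q_{-1, b} = q_{a, -1} = 0$). The support constraint $\mathrm{supp}(h) \subseteq S$ rewrites as $q_{a, b} = q_{a-1, b} + q_{a, b-1}$ for every $(a, b) \notin S$, and since $(0, 0) \notin S$ for any reduced model, $q_{0, 0} = 0$. The degree bound further imposes $q_{a, b} = 0$ whenever $a + b \ge d$. The plan is to cascade these two families of relations inward from the $d$-diagonal using the structural restrictions of Lemma \ref{lemma:sharp-models-properties}: items (i)--(ii) guarantee that on the $d$-diagonal only $(d,0)$ and $(0,d)$ lie in $S$, forcing $q_{a-1, b} + q_{a, b-1} = 0$ at every interior position of that diagonal; item (v) restricts the $(d-1)$-diagonal positions of $S$ to odd first coordinate; and item (iii) controls which axis points belong to $S$. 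Iterating the recurrence along rows, columns, and diagonals, these constraints should annihilate $q$ lattice point by lattice point.

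The main obstacle is the last step: turning the combinatorics of Lemma \ref{lemma:sharp-models-properties} into a clean propagation that terminates with $q \equiv 0$ without unwieldy case analysis. The natural framework is the sink-analysis of Section \ref{sec:labeled-newton-diagrams}: a sharp model satisfies $|S| = 2 + \lceil (d-1)/2 \rceil = n+1$, so Proposition \ref{prop:sinks-lower-bound} is met with equality and the support size equals the number of sinks of $G_\MM$ exactly---there is no combinatorial slack. A nonzero $q$ would introduce additional sign changes in the difference diagram of $g_\MM$ and $g_{\MM'}$ that cannot be absorbed without creating a forbidden source (violating Lemma \ref{lemma:unique-source}) or forcing an additional sink (violating the tightness $|S| = n+1$). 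The delicate point is thus a careful tracking of how sinks can split or merge between $\MM$ and $\MM'$; once formalized, it closes the argument and gives the almost-sharp case by the symmetric count for $d = 2n-2$.
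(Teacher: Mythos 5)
Your reduction of the statement to ``the coefficients are determined by the support'' is fine (it is essentially a restatement rather than a strengthening, and correctly avoids the circularity of invoking Corollary~\ref{cor:sharp-models-fundamental}, which the paper deduces \emph{from} this proposition). The setup $h=f_\MM-f_{\MM'}=(x+y-1)q$ and the coefficient recurrence are also the right starting point. But the argument stops exactly where the work begins. The cascade you describe does kill $q$ on the axes and, via Lemma~\ref{lemma:sharp-models-properties}(i)--(iii), on the $(d-1)$-diagonal, but it stalls at the first interior support point, where the relation $q_{a,b}=q_{a-1,b}+q_{a,b-1}$ is unavailable; everything past that is conditional (``should annihilate'', ``once formalized''). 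Moreover, the mechanism you propose for closing the gap does not work as stated: the sink/source analysis of Section~\ref{sec:labeled-newton-diagrams} constrains only the \emph{signs} of the coefficients of $g_\MM$, never their magnitudes. Two reduced models with the same support could a priori have Newton diagrams with identical sign patterns --- hence the same sink count, both meeting Proposition~\ref{prop:sinks-lower-bound} with equality --- while $q=g_\MM-g_{\MM'}\neq 0$; a small nonzero $q$ creates no new source and no new sink, so no contradiction with Lemma~\ref{lemma:unique-source} or with tightness of the sink count arises. Finally, Lemma~\ref{lemma:sharp-models-properties} is stated only for sharp models, so items (i)--(v) give you nothing in the almost sharp case $d=2n-2$, which your last sentence dismisses by symmetry.

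For comparison: the paper does not reprove this statement but defers entirely to \cite{UniquenessOfCertainPolynomialsConstantOnALine}, where the argument is a convexity/perturbation one rather than a coefficient cascade. If $\MM\neq\MM'$ share the support $S$ with $|S|=n+1$, consider $f_s=(1-s)f_\MM+sf_{\MM'}$: every $f_s$ equals $1$ on the line $x+y=1$, has support contained in $S$, and has coefficients affine in $s$. Since the two polynomials differ, some coefficient strictly changes; pushing $s$ beyond $[0,1]$ in the appropriate direction, one reaches a first parameter value at which a coefficient vanishes while all others remain nonnegative, yielding a reduced model with at most $n$ terms. Provided a top-degree term survives, its degree is still $d$, contradicting Theorem~\ref{thm:degree-upper-bound} (which forces degree at most $2n-3$ for $n$ terms); verifying that the degree is preserved is where the structure of the $d$-diagonal (e.g.\ that the coefficients of $x^d$ and $y^d$ equal $1$ for sharp models) enters, and it is the one genuinely delicate point. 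Some such global argument is needed; the local sign combinatorics you invoke cannot substitute for it.
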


\begin{proof}
The proof is given in \cite{UniquenessOfCertainPolynomialsConstantOnALine} and is based on \cite[Proposition 4.1]{UniquenessOfCertainPolynomialsConstantOnALine}.
\end{proof}

\begin{corollary} \label{cor:sharp-models-fundamental}
Every (almost) sharp reduced model is fundamental. 
\end{corollary}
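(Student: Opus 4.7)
The plan is to deduce the corollary directly from the preceding proposition, which asserts that no two (almost) sharp models share the same support. First, I would reformulate fundamentality in the form most convenient for this argument: a reduced model represented by $(c_i,\nu_i,\mu_i)_{i=0}^n$ is fundamental if and only if the $c_i$'s are the unique real solution of the linear system encoding $\sum_{i=0}^n c_i t^{\nu_i}(1-t)^{\mu_i}=1$, which is equivalent to the polynomials $\{t^{\nu_i}(1-t)^{\mu_i}\}_{i\in[n]}$ being $\RR$-linearly independent. This characterization already appears implicitly in Lemma~\ref{lemma:sharp-model-fundamental}.

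Next, I would argue by contradiction. Suppose $\MM$ is an (almost) sharp reduced model that is not fundamental; then the kernel of the relevant linear map is nontrivial and contains a vector $v=(v_i)_{i\in[n]}\neq 0$. The key move is a small perturbation: because the scalings $c_i$ of $\MM$ are strictly positive, for every sufficiently small $\epsilon>0$ the perturbed coefficients $c_i+\epsilon v_i$ remain strictly positive, and by linearity they still satisfy $\sum_{i=0}^n (c_i+\epsilon v_i)t^{\nu_i}(1-t)^{\mu_i}=1$. This produces a second reduced model $\MM'\neq\MM$ with exactly the same support and the same degree as $\MM$, hence (almost) sharp as well. Applying the preceding proposition to $\MM$ and $\MM'$ forces $\MM=\MM'$, contradicting $v\neq 0$.

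The heavy lifting is done by the proposition; the only substantive step on the corollary's side is the perturbation that turns a nontrivial kernel into a second model with the same support. The main technical point to verify is that $\MM'$ is a genuine reduced model, which reduces to checking that all $c_i+\epsilon v_i$ stay positive (handled by strict positivity of the $c_i$ together with the freedom to shrink $\epsilon$) and that the exponent pairs are unchanged and still pairwise distinct and nonzero (automatic since $\MM$ is reduced). I do not anticipate any serious obstacle beyond this bookkeeping.
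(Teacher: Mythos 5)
Your proof is correct and takes essentially the same approach as the paper: both deduce the corollary from the preceding proposition that no two (almost) sharp models can share a support. The paper compresses the argument into the one-line remark that finiteness of reduced models with a given support forces the scalings to be uniquely determined; your explicit $\epsilon$-perturbation of the coefficients along a kernel vector is precisely the detail that justifies that remark.
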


\begin{proof}
If there are only finitely many reduced models in $\Delta_n$ of degree $d$, the scalings must be uniquely determined by $p_0+p_1+\ldots+p_n=1$.
\end{proof}

\section{Enumerating Fundamental Models} \label{sec:enumerating-fundamental-models}

In this section, we study the number of fundamental models in $\Delta_n$ of degree $d$. By Corollary~\ref{cor:finiteness-fundamental-models}, we already know that there are only finitely many fundamental models for each pair~$(n,d)$. The following proposition describes pairs for which no fundamental model exists.

\begin{proposition}\label{prop:none}
Let $n,d\in\NN^+$. There exists no fundamental model in $\Delta_n$ of degree $d$ if $$d<n \quad \textup{or}\quad d>2n-1.$$
\end{proposition}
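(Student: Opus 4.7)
The plan is to handle the two inequalities separately, since they have completely different flavors. The case $d > 2n-1$ is ruled out at once by Theorem~\ref{thm:degree-upper-bound}: every fundamental model is in particular a reduced R1d model in $\Delta_n$, so its degree satisfies $\deg(\MM) \le 2n-1$, and no fundamental model of degree $d > 2n-1$ can exist.

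The content of the statement lies in the lower bound $d \ge n$. My plan is to invoke the alternative characterization of fundamentality recalled at the start of the proof of Lemma~\ref{lemma:sharp-model-fundamental}: a reduced R1d model represented by $(c_i, \nu_i, \mu_i)_{i=0}^n$ is fundamental if and only if the $n+1$ polynomials $t^{\nu_i}(1-t)^{\mu_i}$ are linearly independent in $\RR[t]$. This equivalence follows from the observation that the defining constraint $\sum_{i=0}^n c_i\, t^{\nu_i}(1-t)^{\mu_i} = 1$ is an inhomogeneous linear system on the vector $(c_i)_i$ whose solution (known to exist for the actual scalings of the model) is unique precisely when these polynomials are linearly independent.

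With this characterization in hand, the lower bound is an immediate dimension count. Suppose $\MM \subseteq \Delta_n$ is fundamental of degree $d$. By definition of $\deg(\MM)$, each exponent pair satisfies $\nu_i + \mu_i \le d$, so every polynomial $t^{\nu_i}(1-t)^{\mu_i}$ lies in the $(d+1)$-dimensional subspace $\RR[t]_{\le d}$. If $d < n$, then $d+1 < n+1$, so the $n+1$ polynomials $t^{\nu_i}(1-t)^{\mu_i}$ must be linearly dependent, contradicting fundamentality. Hence $d \ge n$.

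I do not expect any major obstacle: the upper bound is a direct appeal to the main theorem of Section~\ref{sec:labeled-newton-diagrams}, and the lower bound is pure linear algebra via the dimension of $\RR[t]_{\le d}$, leveraging the linear-independence reformulation of fundamentality already used elsewhere in the paper.
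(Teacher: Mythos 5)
Your proof is correct and follows essentially the same route as the paper: the upper bound is dispatched by Theorem~\ref{thm:degree-upper-bound}, and the lower bound is the same dimension count --- the paper phrases it as a linear system with $d+1$ equations in $n+1$ unknowns having no unique solution when $d<n$, while you phrase it dually as $n+1$ polynomials being forced to be dependent in the $(d+1)$-dimensional space $\RR[t]_{\le d}$, via the linear-independence characterization of fundamentality already recalled in the proof of Lemma~\ref{lemma:sharp-model-fundamental}.
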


\begin{proof}
We first consider the case where $d<n$. For this, we study the expanded form of $$c_0t^{\nu_0}(1-t)^{\mu_0}+c_1t^{\nu_1}(1-t)^{\mu_1}+\ldots+c_nt^{\nu_n}(1-t)^{\mu_n}=1.$$ As in the proof of Theorem \ref{thm:composition-fundamental-models}, the coefficients are linear expressions in $c_0, \ldots, c_n$, and the above constraint holds for all $t\in[0,1]$ if all coefficients except of the constant one are zero. This yields a linear system of $d+1$ equations in $n+1$ variables. If $d<n$ the system has more variables than equations, so that the corresponding model cannot be fundamental. The second case where $d>2n-1$ follows immediately from Theorem \ref{thm:degree-upper-bound}.
\end{proof}

For all other combinations of pairs, at least one fundamental model can be constructed.

\begin{proposition} \label{prop:existence-fundamental-model}
Let $n, d\in\NN^+$. There exists a fundamental model in $\Delta_n$ of degree $d$ if $$n\le d\le 2n-1.$$
\end{proposition}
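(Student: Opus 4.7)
The plan is to build the required fundamental model using Theorem~\ref{thm:composition-fundamental-models}, starting from the two explicit families already at our disposal: the binomial models $\BB_k$ (fundamental by Theorem~\ref{thm:binomial-model-fundamental}) and the sharp models from \eqref{eq:example-sharp-model} (fundamental by Lemma~\ref{lemma:sharp-model-fundamental}). This handles the boundary case $d = 2n-1$ immediately, since the parameterization \eqref{eq:example-sharp-model} itself is a fundamental model in $\Delta_n$ of degree $2n-1$.

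For $n \le d < 2n-1$, I set $m \coloneqq d - n + 1$, so that $1 \le m \le n-1$. I would take $\MM_1$ to be the sharp model in $\Delta_m$ of degree $2m-1$ and let $\MM_2 \coloneqq \BB_{n-m}$, which is fundamental in $\Delta_{n-m}$ of degree $n-m$ with $n-m \ge 1$. To apply Theorem~\ref{thm:composition-fundamental-models}, one needs to verify the hypothesis $h_1(d_1, 0) = 1$; but the first coordinate of $\MM_1$ in \eqref{eq:example-sharp-model} is $t^{2m-1}$ with unit coefficient, so this is immediate. The theorem then produces a fundamental model in $\Delta_{m + (n-m)} = \Delta_n$ of degree $(2m-1) + (n-m) = n + m - 1 = d$, as required.

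The ``hard part'' of the argument is in fact already done: Theorem~\ref{thm:composition-fundamental-models} supplies the nontrivial content, and Lemma~\ref{lemma:sharp-model-fundamental} guarantees the sharp model input. What remains for this proposition is only a careful choice of parameters, ensuring that $m$ and $n-m$ each lie in the admissible range so that both inputs to the composition are genuine fundamental models, and that the degrees and dimensions add up correctly. Since the boundary case $d = 2n-1$ is handled separately by the sharp model itself and the case $d = n$ falls out with $m = 1$ (giving a composition of the trivial sharp model $\BB_1$ with $\BB_{n-1}$), no edge case falls through.
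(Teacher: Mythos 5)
Your proof is correct and follows essentially the same route as the paper: compose the sharp model \eqref{eq:example-sharp-model} in $\Delta_m$ (your $m$ is the paper's $k+1$) with a binomial model via Theorem~\ref{thm:composition-fundamental-models}, handling $d=2n-1$ by the sharp model alone. The only cosmetic difference is that you absorb the case $d=n$ into the composition with $m=1$, whereas the paper cites the binomial model directly; both are fine.
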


\begin{proof}
For $n=d$, there exists a fundamental model by Theorem \ref{thm:binomial-model-fundamental}, namely the binomial model. Similarly, for $d=2n-1$, the sharp model given by the parameterization \eqref{eq:example-sharp-model} is fundamental by Lemma \ref{lemma:sharp-model-fundamental}. Now let $n> 2$ be fixed and let $n < d < 2n-1$. Then there exists $k \in \{ 1,\ldots,n-1 \}$ such that $d=n+k$. Let $\MM_1$ be the sharp model given by the parameterization \eqref{eq:example-sharp-model} in $\Delta_{k+1}$ of degree $2k+1$ and let $\MM_2$ be the binomial model in $\Delta_{n-k-1}$ of degree $n-k-1$. By Lemma \ref{lemma:sharp-models-properties}, the support of $\MM_1$ contains $(2k+1,0)$, and its corresponding coefficient is one by construction. Then, by Theorem~\ref{thm:composition-fundamental-models}, the composition of $\MM_1$ and $\MM_2$ is a fundamental model in $\Delta_n$ of degree $d=n+k$.
\end{proof}

The above proof idea can help to provide some first lower bounds on the number of fundamental models when they exist, see Proposition \ref{prop:recursive-formula-degree-2n-2}. Determining the actual number of these models given $n$ and $d$ is computationally difficult. Bik and Marigliano computed the number of fundamental models in $\Delta_n$ for $n\le5$ \cite[Table 1]{ClassifyingOneDimensionalDiscreteModelsWithMaximumLikelihoodDegreeOne}. Independently, Lebl and Lichtblau performed some computations to determine the number of polynomials in $\FF$ of degree $d$ with $n+1$ terms \cite[Section 2]{UniquenessOfCertainPolynomialsConstantOnALine}. The set of all polynomials in $\FF$ of degree $d$ is denoted by $\HH(2,d)$, see e.g. \cite{ComplexityResultsForCRMappingsBetweenSpheres, DegreeEstimatesForPolynomialsConstantOnAHyperplane}. Analogous to Corollary \ref{cor:sharp-models-fundamental}, the number of these polynomials coincides with the number of fundamental models whenever it is finite. The results by Lebl and Lichtblau for polynomials in $\HH(2,d)$ where $d=2n-1$, $d=2n-2$ and $d=2n-3$, respectively, can be found in \cite[Table~2]{UniquenessOfCertainPolynomialsConstantOnALine} for small $n$, with corresponding OEIS sequences A143107, A143108 and A143109 \cite{OEIS}. Surprisingly, their numbers are finite not only for $d=2n-1$ and $d=2n-2$, but also for $d=2n-3$. We return to this observation later. Additionally, Lebl determined the number of polynomials in $\HH(2,d)$ for $n=10$ and $d=19$ to be 24, and for $n=11$ and $d=21$ to be two --- the latter case required more than eight months of computing time  \cite{lebl2013addendum}. As part of this article, we performed further computations which are based on \cite{MScThesis}. In particular, we added a new OEIS sequence A386840 for the number of fundamental models when $d=2n-4$. The code is available at the MathRepo page \cite{MathRepo}. Our results are shown in Table~\ref{table:fundamental-models} and Figure~\ref{fig:number-fundamental-models}. Comparing the results with \cite[Table~2]{UniquenessOfCertainPolynomialsConstantOnALine} shows a discrepancy regarding the value for parameters $n=d=3$. The following proposition corrects \cite[Table~2]{UniquenessOfCertainPolynomialsConstantOnALine}. Our corrected version appears in \cite[A387029]{OEIS}. 

\begin{proposition}[Corrigendum]
The number of polynomials in $\HH(2,3)$ with four distinct terms is twelve, instead of eleven.
\end{proposition}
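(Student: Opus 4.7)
The plan is to enumerate all polynomials in $\HH(2,3)$ with four terms directly, by parameterizing them through the factor $g_\MM\in\RR[x,y]$ of degree two with $f_\MM-1=(x+y-1)g_\MM$. Writing $g_\MM=1+g_{10}x+g_{01}y+g_{20}x^2+g_{11}xy+g_{02}y^2$, with $g_{00}=1$ forced by matching the constant term $-1$ of $f_\MM-1$, expanding the product expresses each coefficient $p_{ab}$ of $x^ay^b$ in $f_\MM$ as an affine function of the five free variables: $p_{30}=g_{20}$, $p_{03}=g_{02}$, $p_{21}=g_{11}+g_{20}$, $p_{12}=g_{11}+g_{02}$, $p_{20}=g_{10}-g_{20}$, $p_{02}=g_{01}-g_{02}$, $p_{11}=g_{10}+g_{01}-g_{11}$, $p_{10}=1-g_{10}$, and $p_{01}=1-g_{01}$. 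Counting four-term polynomials then reduces to enumerating those tuples for which exactly four of these nine quantities are strictly positive while the remaining five vanish, subject to all $p_{ab}\geq 0$.

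We split the enumeration into four cases according to whether $g_{20}$ and $g_{02}$ (equivalently, the coefficients of $x^3$ and $y^3$) are zero or positive. If both are positive, requiring exactly four positives forces all five other coefficients to vanish simultaneously, which uniquely determines $(g_{10},g_{01},g_{20},g_{11},g_{02})=(1,1,1,2,1)$ and produces the binomial polynomial $(x+y)^3$ as the unique solution. If exactly one of $g_{20},g_{02}$ is zero, each vanishing condition $p_{ab}=0$ becomes a linear equation in the $g$-variables, and enumerating consistent five-element collections of such equations yields four polynomials in each case; the two subcases are related by the symmetry $x\leftrightarrow y$. If both $g_{20}=g_{02}=0$, degree three forces $g_{11}>0$, so $p_{21}=p_{12}=g_{11}>0$ are automatic, and a similar enumeration produces three additional polynomials.

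The main subtlety is correctly identifying which five-element collections of vanishing conditions are mutually consistent. Pairs of incompatible equations arise naturally, for example $g_{10}=1$ (from $p_{10}=0$) versus $g_{10}=0$ (from $p_{20}=0$ combined with $g_{20}=0$), or $g_{01}=1$ versus $g_{01}=0$; bookkeeping these incompatibilities eliminates most candidate supports. Summing the valid contributions across all four cases yields a total of $1+4+4+3=12$ polynomials, establishing the corrected count and pinpointing explicitly the polynomial missing from \cite[Table~2]{UniquenessOfCertainPolynomialsConstantOnALine}.
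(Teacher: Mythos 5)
Your route is genuinely different from the paper's. The paper simply exhibits the twelve polynomials (seven up to the swap $x\leftrightarrow y$: five asymmetric and two symmetric, giving $5\cdot 2+2=12$), checks they lie in $\HH(2,3)$, and disposes of all remaining supports by direct (computer-assisted) computation. You instead work on the quotient side: parameterizing $f_\MM$ through the degree-two factor $g_\MM$ with $g_{00}=1$ and reading off all nine candidate coefficients as affine functions of $(g_{10},g_{01},g_{20},g_{11},g_{02})$. I checked your nine relations and they are all correct, and the case split on whether $g_{20}=p_{30}$ and $g_{02}=p_{03}$ vanish reproduces exactly the structure of the paper's list (one polynomial with both cubes, four with only $x^3$ plus four mirrors, three with neither), so the totals $1+4+4+3=12$ agree with the paper. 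What your approach buys is a hand-checkable, finite linear-algebra enumeration in five unknowns that in principle replaces the paper's appeal to ``direct computations''; what it costs is exactly that enumeration.

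The one substantive caveat is that the enumeration is asserted rather than carried out. In the case $g_{02}=0$, $g_{20}>0$ you must examine the $\binom{7}{4}=35$ choices of four vanishing conditions among $\{p_{21},p_{12},p_{20},p_{02},p_{11},p_{10},p_{01}\}$ and show that exactly four are consistent with nonnegativity; you name representative incompatibilities but do not exhibit the surviving systems or their solutions. Likewise, in the first case the claim that the four positive coefficients must be $p_{30},p_{03},p_{21},p_{12}$ (rather than, say, $p_{30},p_{03},p_{21},p_{20}$) needs the short contradiction arguments spelled out. Since the resulting counts match the paper's explicit list, I have no doubt the enumeration closes, but as written the proposal is a correct and well-designed proof scheme rather than a complete proof; writing out the surviving linear systems in each case (or at least the full list of twelve solutions they produce) would finish it.
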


\begin{proof}
There are twelve fundamental models of degree three with support size four. From these models we obtain twelve polynomials which are listed below up to swapping of variables:
\begin{alignat*}{3}
&x^3+x^2y+xy+y, \qquad &&x^3 + 2x^2y+xy^2+y, \qquad &&x^3+x^2y+2xy+y^2, \\
&x^3+3x^2y+2xy^2+y^2, \qquad &&x^3+3x^2y+3xy^2+y^3, \qquad &&x^2y+xy^2+x+y^2, \\
& \qquad &&2x^2y+x^2+2xy^2+y^2.
\end{alignat*}
It is easy to check that these polynomials satisfy the necessary conditions to be a polynomial in $\HH(2,3)$. All other possible supports were excluded by direct computations.
\end{proof}

\begin{table}[htb]
    \centering
    \scalebox{0.933}{\begin{tabular}{|c|ccccccccccccc|}
    \hline
    \diagbox{$n$}{$d$} & 1 & 2 & 3 & 4 & 5 & 6 & 7 & 8 & 9 & 10 & 11 & 12 & 13 \\
    \hline
    1 & 1 & & & & & & & & & & & & \\
    2 & & 3 & 1 & & & & & & & & & &   \\
    3 & & & 12 & 4 & 2 & & & & & & & & \\
    4 & & & & 82 & 38 & 10 & 4 & & & & & & \\
    5 & & & & & 602 & 254 & 88 & 24 & 2 & & & & \\
    6 & & & & & & 6710 & 2421 & 643 & 198 & 32 & 4 & & \\
    7 & & & & & & & 83906 & 23285 & 6445 & 1442 & 332 & 56 & 8 \\
    \hline
    \end{tabular}}
    \caption{Number of fundamental models of degree \( d \) in the simplex $\Delta_n$. All blank entries are zero by Proposition \ref{prop:none}.} \label{table:fundamental-models}
\end{table}

For polynomials in $\HH(2,d)$ where $d=2n-2$ a recursive formula was conjectured on the associated OEIS page for sequence A143108 \cite{OEIS}. 
We prove that this is a lower bound on the number of fundamental models in $\Delta_n$ of degree $2n-2$. For this, denote by $a_n$ the number of fundamental models in $\Delta_n$ of degree $2n-1$. Then the recursive formula is given in the following proposition. 

\begin{proposition} \label{prop:recursive-formula-degree-2n-2}
For $n\ge3$, the number of fundamental models in $\Delta_n$ of degree $2n-2$ is at least $$2(a_1a_{n-1}+a_2a_{n-2}+\ldots+a_{n-1}a_1).$$
\end{proposition}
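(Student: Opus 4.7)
The approach is to produce the required almost sharp fundamental models through two constructions: compositions of pairs of sharp models via Theorem~\ref{thm:composition-fundamental-models}, together with their images under the coordinate swap $x\leftrightarrow y$. For each $k\in\{1,\ldots,n-1\}$ and each ordered pair $(\MM_1,\MM_2)$ of sharp models with $\MM_1\in\Delta_k$ and $\MM_2\in\Delta_{n-k}$, the plan is to compose $\MM_1$ and $\MM_2$ at $(2k-1,0)$, obtaining a fundamental model in $\Delta_n$ of degree $(2k-1)+(2(n-k)-1)=2n-2$. The hypothesis $h_1(2k-1,0)=1$ needed to invoke Theorem~\ref{thm:composition-fundamental-models} follows immediately: by Lemma~\ref{lemma:sharp-models-properties}~(i) and~(iii), $(2k-1,0)$ is the only support point of $\MM_1$ on the horizontal axis, so evaluating the identity $f_{\MM_1}(x,y)=1$ at the line point $(1,0)$ collapses the sum to $c_{(2k-1,0)}=1$.

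Next I would show that distinct triples $(k,\MM_1,\MM_2)$ produce distinct compositions, giving $\sum_{k=1}^{n-1}a_k a_{n-k}$ pairwise different fundamental models. The key observation is that the support of $\MM_1\star\MM_2$ splits cleanly along the vertical line $\nu=2k-1$: interior points of $\MM_1$ satisfy $\nu\le 2k-2$, the corner $(0,2k-1)$ is the only $y$-axis contribution, and shifted $\MM_2$ occupies $\nu\ge 2k-1$. The integer $k$ can be recovered by observing that the intersection of the support with the $(2n-2)$-diagonal consists of exactly two points, $(2n-2,0)$ and $(2k-1,2(n-k)-1)$. Splitting the rest along $\nu=2k-1$ then recovers both factors, and since sharp models are uniquely determined by their supports, the triple $(k,\MM_1,\MM_2)$ is uniquely reconstructed.

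The factor of two will come from the involution $\sigma:(a,b)\mapsto(b,a)$, which preserves fundamentality because $f_{\sigma(\MM)}(x,y)=f_{\MM}(y,x)$ still equals $1$ on the line $x+y=1$. Every composition $\MM_1\star\MM_2$ contains $(2n-2,0)$ in its support (the image of $(2(n-k)-1,0)\in\textup{supp}(\MM_2)$ under the shift), while its only $y$-axis point is $(0,2k-1)$ with $2k-1<2n-2$. Hence $\sigma(\MM_1\star\MM_2)$ contains $(0,2n-2)$ but not $(2n-2,0)$, cleanly separating the reflected and non-reflected families. Combining them produces $2\sum_{k=1}^{n-1}a_k a_{n-k}$ distinct fundamental almost sharp models in $\Delta_n$. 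The main delicate point of the argument is the distinctness bookkeeping: checking that the support decomposition along $\nu=2k-1$ is canonical so that $k$ and the two factors can be read off uniquely; once that is done, the $(2n-2,0)$ versus $(0,2n-2)$ invariant handles the inter-family disjointness effortlessly.
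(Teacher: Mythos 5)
Your construction is exactly the paper's: compose pairs of sharp models at $(2k-1,0)$ via Theorem~\ref{thm:composition-fundamental-models} and double the count with the $t\mapsto 1-t$ reflection. The argument is correct, and in fact slightly more complete than the paper's, since you explicitly verify that the $2\sum_{k}a_k a_{n-k}$ resulting models are pairwise distinct (reading off $k$ from the two support points on the $(2n-2)$-diagonal and separating the reflected family by the $(2n-2,0)$ versus $(0,2n-2)$ invariant), a point the paper leaves implicit.
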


\begin{proof}
Let $1\le k\le n-1$ and consider a fundamental model $\MM_1$ in $\Delta_k$ of degree $2k-1$. Furthermore, let $\MM_2$ be a fundamental model in $\Delta_{n-k}$ of degree $2(n-k)-1$. In particular, both $\MM_1$ and $\MM_2$ are sharp. By Lemma \ref{lemma:sharp-models-properties}, $\MM_1$ contains $(2k-1,0)$ in its support, and the corresponding scaling is one \cite[Lemma 3.1]{UniquenessOfCertainPolynomialsConstantOnALine}. Then, by Theorem \ref{thm:composition-fundamental-models}, the composition of $\MM_1$ and $\MM_2$ is a fundamental model in $\Delta_n$ of degree $2n-2$. In this way, we construct a total of $a_1a_{n-1}+a_2a_{n-2}+\ldots+a_{n-1}a_1$ fundamental models in $\Delta_n$ of degree $2n-2$. Considering the parameterization of such a model and substituting $s\coloneqq 1-t$ yields another fundamental model in $\Delta_n$ of degree $2n-2$. The substitution corresponds to a reflection on the main diagonal in the corresponding chip configuration. Overall, we obtain the lower bound.
\end{proof}

Currently, we are not aware of any argument why the expression in Proposition \ref{prop:recursive-formula-degree-2n-2} is also an upper bound. However, based on our computations we state the following conjecture. 

\begin{conjecture}
In Proposition \ref{prop:recursive-formula-degree-2n-2}, equality holds.
\end{conjecture}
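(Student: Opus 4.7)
The plan is to upgrade Proposition \ref{prop:recursive-formula-degree-2n-2} to a bijection: every almost sharp model $\MM \in \Delta_n$ of degree $2n-2$ should arise, uniquely up to the reflection $t \mapsto 1-t$, as a composition $\MM_1 \circ \MM_2$ of a sharp model $\MM_1 \in \Delta_k$ of degree $2k-1$ and a sharp model $\MM_2 \in \Delta_{n-k}$ of degree $2(n-k)-1$, at the point $(2k-1,0)$. Since almost sharp models are fundamental by Corollary \ref{cor:sharp-models-fundamental}, $\MM$ is determined by its support, so the task reduces to a combinatorial decomposition of $\textup{supp}(\MM)$ together with a verification that the two resulting pieces are themselves supports of sharp models.

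The decomposition is to be read off from the Newton diagram $G_\MM$. By Proposition \ref{prop:sinks-lower-bound} the diagram has exactly $n+1$ sinks, and by Corollary \ref{cor:support-size-sinks-bound} each sink is a distinct support point of $\MM$. Refining the case analysis in Step 2 of Proposition \ref{prop:sinks-lower-bound} to fully characterize the minimum-sink configurations, the goal is to prove that, up to reflection on the main diagonal, every almost sharp support contains the corner $(2n-2,0)$ and has a unique extra point $(0,d_1)$ on the $x$-axis with $d_1 = 2k-1$ odd and $0 < d_1 < 2n-2$. Splitting along the vertical line $\nu = d_1$, define $\MM_1$ to be the reduced model with support $\{(\nu,\mu)\in \textup{supp}(\MM) : \nu < d_1\}\cup\{(d_1,0)\}$ (with coefficient $1$ at $(d_1,0)$), and $\MM_2$ to be the shift by $(-d_1,0)$ of $\{(\nu,\mu)\in\textup{supp}(\MM) : \nu \geq d_1\}$. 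Using Lemma \ref{lemma:sharp-models-properties} as a checklist on the two supports, verify that both are of sharp shape in their respective simplices; Corollary \ref{cor:sharp-models-fundamental} then supplies the unique scalings, and a direct support-and-scaling comparison gives $\MM_1 \circ \MM_2 = \MM$. Uniqueness of $k$ follows because $d_1$ is an invariant of $\MM$, and since any composition carries an extra axis point on only one of the two axes, the reflection acts freely on the set of compositions, justifying the factor $2$ in the formula.

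The main obstacle is the structural classification step. Proposition \ref{prop:sinks-lower-bound} establishes the sink lower bound but stops short of enumerating all extremal configurations, and in particular does not rule out \emph{a priori} alternative minimum-sink patterns — for instance configurations where both $(2n-2,0)$ and $(0,2n-2)$ lie in the support yet no odd axis point exists, or where extra axis points appear on both axes simultaneously. Handling these will require an ancestor-and-unsplitting analysis in the spirit of Section \ref{sec:labeled-newton-diagrams}, leveraging Lemma \ref{lemma:unique-source} and the rigidity afforded by the fundamental property to pin down the support geometry. Once that classification is in place, the remaining verifications on $\MM_1$ and $\MM_2$ are routine checks against Lemma \ref{lemma:sharp-models-properties}, but the classification itself will demand a careful, more refined case analysis than the one given in the proof of Proposition \ref{prop:sinks-lower-bound}.
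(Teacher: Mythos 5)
This statement is a conjecture that the paper deliberately leaves open: immediately before stating it, the authors write that they are not aware of any argument why the expression in Proposition \ref{prop:recursive-formula-degree-2n-2} is also an upper bound, and they offer only computational evidence. Your proposal does not close that gap. The lower-bound half of your bijection is exactly Proposition \ref{prop:recursive-formula-degree-2n-2} (every composition of two sharp models, together with its reflection, is an almost sharp model); the upper-bound half is the ``structural classification step'' that you yourself flag as the main obstacle and do not carry out. That step is the entire content of the conjecture, so what you have is a plausible strategy, not a proof.

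Two concrete reasons the classification is not a routine refinement of the existing arguments. First, Step 2 of Proposition \ref{prop:sinks-lower-bound} classifies minimum-sink configurations only for an \emph{ancestor} $\MM^*$ whose sinks all lie on the $d$- and $(d-1)$-diagonals; the sinks of the actual diagram $G_\MM$ can sit deep in the interior (in Example \ref{ex:Newton-diagrams-sinks} they occur at $(1,1)$, $(5,1)$, $(1,5)$), and the sink-migration moves of Step 1 only bound the count, they do not record where the support points of $\MM$ actually are. So ``refining Step 2'' does not by itself pin down the support of an almost sharp model, and the alternative patterns you mention (extra axis points on both axes, or none) are genuinely not excluded by anything proved in Section \ref{sec:labeled-newton-diagrams}. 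Second, even if you establish that the support splits along $\nu = d_1$ into two pieces each satisfying the necessary conditions of Lemma \ref{lemma:sharp-models-properties}, those conditions are necessary but not sufficient for a support to carry a sharp model: you would still need to produce positive scalings summing to one on each piece and show they agree with the restrictions of the scalings of $\MM$ (the coefficient at $(d_1,0)$ of the left piece is forced to be $1$ plus the coefficient $\MM$ assigns there, not $1$, unless you also prove $\MM$ has no support at $(d_1,0)$). Until the decomposition is actually established, the conjecture remains open.
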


\begin{figure}[htb]
\centering
\scalebox{0.88}{\begin{tabular}{C{0.5cm}C{0.5cm}C{0.5cm}C{0.5cm}C{0.5cm}C{0.5cm}C{0.5cm}C{0.5cm}C{0.5cm}C{0.5cm}C{0.5cm}C{0.5cm}C{0.5cm}C{0.5cm}C{0.5cm}C{0.5cm}C{0.5cm}}
&&&&&&&&1&&&&&&&&\\
&&&&&&&1&&3&&&&&&&\\
&&&&&&2&&4&&12&&&&&&\\
&&&&&4&&10&&38&&82&&&&&\\
&&&&2&&24&&88&&254&&602&&&&\\
&&&4&&32&&198&&643&&2421&&6710&&&\\
&&8&&56&&332&&1442&&6445&&23285&&83906&&\\
&\textcolor{cb-green}{4}&&\textcolor{cb-yellow}{96}&&?&&?&&?&&?&&285601&&1279349&\\
\textcolor{cb-green}{2}&&\textcolor{cb-yellow}{112}&&?&&?&&?&&?&&?&&?&&?
\end{tabular}}

\caption{Number of fundamental models in $\Delta_n$ of degree $d$. The $n$-th row represents the models in $\Delta_n$, starting with $n=1$, while the $k$-th diagonal represents models of degree $2n-k$ where the north-to-west diagonals are considered. Each question mark entry is at least one. Entries in green were taken from \cite[Table 2]{UniquenessOfCertainPolynomialsConstantOnALine}, entries in yellow are conjectured values. We have included this table in the OEIS \cite[A386841]{OEIS}.} \label{fig:number-fundamental-models}
\end{figure}

We illustrate the recursive formula with an example.

\begin{example}
For $n=3$ we have $2 (a_1 a_2 + a_2 a_1) = 4$ which is precisely the number of fundamental models in $\Delta_n$ of degree $2n-2$. These four models can be constructed from the previous sharp ones. The unique sharp model in $\Delta_1$ is the binomial model with support $\{(1,0),(0,1)\}$ and coefficients $c_0=c_1=1$. The unique sharp model in $\Delta_2$ is shown in Figure \ref{fig:chip-configuration-composition} (a). By symmetry, taking the composition of all possible arrangements yields four fundamental models in $\Delta_3$ of degree four. Their chip configurations are shown in Figure \ref{fig:construction-fundamental-models-(3,4)}.
\end{example}

\begin{figure}[htb]
\begin{center}
\begin{BVerbatim}
 .              1              .              1
 1 .            . 1            . 1.           . . 
 . . .          . . .          . . .          . 3 .
 . 3 . 1        . 3 . .        1 . 3 .        . . . 1
-1 . . . 1     -1 . . 1 .     -1 . . . 1     -1 1 . . .
\end{BVerbatim}
\end{center} \vspace{-0.25cm}
\caption{Chip configurations of fundamental models in $\Delta_3$ of degree four.}\label{fig:construction-fundamental-models-(3,4)}
\end{figure}

Another conjecture concerns the number of reduced models in $\Delta_n$ of degree $2n-3$. By \cite[Table~2]{UniquenessOfCertainPolynomialsConstantOnALine}, this number is finite for $n=3,4,5,6$, and therefore the corresponding reduced models are fundamental. A natural conjecture is that this is true for all $n\ge3$. Knowing the sharp models in $\Delta_{n-1}$, the idea presented in \cite[Proposition 4.1]{UniquenessOfCertainPolynomialsConstantOnALine} can be used to check whether a one-parameter family of reduced models in $\Delta_n$ of degree $2n-3$ cannot exist. In contrast, for $n\ge4$ we prove that $\Delta_n$ always contains a reduced model of degree $d\ge2n-4$ that is not fundamental.

\begin{proposition} \label{prop:existence-one-parameter-family}
Let $n\ge 4$ and $4 \le k \le n$. Then there exists a one-parameter family of reduced models in $\Delta_n$ of degree $d=2n-k$.
\end{proposition}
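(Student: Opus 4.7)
The plan is to construct a one-parameter family explicitly by perturbing a sharp model with a polynomial that vanishes on the line $x+y=1$. Set $d := 2n-k$ and $m := n-k+2$. The hypotheses $4 \le k \le n$ give $2 \le m \le n-2$ and $d - (2m-1) = k-3 \ge 1$. Let $\MM_{\textup{sh}}$ denote the sharp model in $\Delta_m$ of degree $2m-1$ obtained by specializing \eqref{eq:example-sharp-model}; by Lemma \ref{lemma:sharp-models-properties} its polynomial $f_{\textup{sh}} \in \RR[x,y]$ satisfies $f_{\textup{sh}}(x,1-x)=1$, and $(2m-1,0)$ lies in its support with coefficient $1$.

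Next, I would define $q(x) := x^{2m-1} + x^{2m} + \cdots + x^{d-1}$ and $h(x,y) := (x+y-1)\,q(x)$. A telescoping computation then shows
$$h(x,y) = x^d - x^{2m-1} + \sum_{j=2m-1}^{d-1} x^j y,$$
so $\textup{supp}(h) = \{(d,0),\,(2m-1,0)\} \cup \{(j,1) : 2m-1 \le j \le d-1\}$ has exactly $k-1$ elements. The candidate family is $f_s := f_{\textup{sh}} + s\,h$ for $s \in \RR$. Since $h(x,1-x) = 0$ identically, each $f_s$ will automatically satisfy $f_s(x,1-x)=1$.

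The remaining step is combinatorial bookkeeping. The key point to check is that $\textup{supp}(\MM_{\textup{sh}})$ and $\textup{supp}(h)$ meet only at $(2m-1,0)$: the non-axis support points of $\MM_{\textup{sh}}$ from \eqref{eq:example-sharp-model} have the form $(m-1-i,\,2i+1)$ for $i=0,\ldots,m-1$, so their $\nu$-coordinates are at most $m-1 < 2m-1$, and hence none of them lies in the row $\{(j,1) : j \ge 2m-1\}$; moreover $(d,0) \notin \textup{supp}(\MM_{\textup{sh}})$ because $d > 2m-1$. Consequently $|\textup{supp}(f_s)| = (m+1) + (k-1) - 1 = n+1$. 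The coefficient at $(2m-1,0)$ in $f_s$ will be $1-s$, each element of $\textup{supp}(h)\setminus\{(2m-1,0)\}$ will acquire coefficient $s$, and the other sharp coefficients will be unchanged and positive. For $s \in (0,1)$ all coefficients are then strictly positive, $f_s$ has degree $\max(2m-1,d) = d$, and so represents a reduced model in $\Delta_n$ of degree $d$. The family $\{f_s\}_{s \in (0,1)}$ is the required one-parameter family; the main (and really only) subtlety I anticipate is the disjointness claim above, which is an immediate consequence of the explicit shape of \eqref{eq:example-sharp-model}.
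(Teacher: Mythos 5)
Your construction is correct; all the arithmetic checks out ($m=n-k+2$ satisfies $2\le m\le n-2$, $d-(2m-1)=k-3\ge 1$, $|\mathrm{supp}(h)|=k-1$, and the support-disjointness claim is right since the only $\mu=1$ point of the sharp model is $(m-1,1)$ with $m-1<2m-1$). The mechanism is the same as the paper's at its core --- perturb a known model by $s\,(x+y-1)\,q(x)$ where $q$ has nonnegative coefficients and the negative part of the product is absorbed by an existing support point with coefficient $1$ --- but the realization is genuinely different. The paper takes $q(x)=x^{d-1}$ (so the identity is just $-ct^{d-1}+ct^{d-1}(1-t)+ct^{d}=0$) applied to a fundamental model in $\Delta_{n-2}$ of degree $d-1$, which it must first manufacture via Proposition \ref{prop:existence-fundamental-model} and the composition machinery of Theorem \ref{thm:composition-fundamental-models}; your version takes the telescoping $q(x)=x^{2m-1}+\cdots+x^{d-1}$ applied directly to the explicit sharp model \eqref{eq:example-sharp-model} in $\Delta_m$. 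What your route buys is self-containedness: it needs only the closed-form sharp model and no composition argument, and it handles all $k$ in one uniform formula. What the paper's route buys is a minimal perturbation (the base model already has $n-1$ of the $n+1$ support points, and only two new points are added), which keeps the resulting family close to a fundamental model of one degree lower. One cosmetic addition you might make: note explicitly that distinct $s\in(0,1)$ give distinct models (e.g.\ the coefficient at $(d,0)$ is $s$), so the family is genuinely one-dimensional.
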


\begin{proof}
Let $n\ge 4$ and $d=2n-k$ with $4 \le k \le n$. By Proposition \ref{prop:existence-fundamental-model}, there exists a fundamental model in $\Delta_{n-2}$ of degree $d-1$. As constructed in the proof, such a model $\MM$ can be obtained as the composition of a sharp and a binomial model. For $d=2n-4$ and $d=2n-n$, respectively, $\MM$ is the sharp or the binomial model. Hence, $\MM$ contains $(d-1,0)$ in its support and the corresponding coefficient is one by the procedure in the proof of Theorem~\ref{thm:composition-fundamental-models}. In the following, we assume that $\MM$ is defined by the parameterization $$p:[0,1] \to \Delta_{n-2}, \quad t \mapsto (c_0t^{\nu_0}(1-t)^{\mu_0}, \ldots, c_{n-3}t^{\nu_{n-3}}(1-t)^{\mu_{n-3}}, t^{d-1}).$$ We claim that the modification $p_c':[0,1] \to \Delta_n$ where $$t \mapsto (c_0t^{\nu_0}(1-t)^{\mu_0}, \ldots, c_{n-3}t^{\nu_{n-3}}(1-t)^{\mu_{n-3}}, (1-c)t^{d-1}, ct^{d-1}(1-t), ct^d)$$ defines a reduced model in $\Delta_n$ of degree $d$ for all $c \in (0,1)$. There are no two components in $p_c'(t)$ that have the same support. Furthermore, all coefficients are positive since $c\in(0,1)$. It remains to show that the components add up to one for all $t\in[0,1]$. Since $\MM$ is a model, we have $$c_0t^{\nu_0}(1-t)^{\mu_0} + \ldots + c_{n-3}t^{\nu_{n-3}}(1-t)^{\mu_{n-3}} + t^{d-1}=1.$$ The remaining terms satisfy $-ct^{d-1}+ct^{d-1}(1-t)+ct^d = 0$. Hence, $c \mapsto p_c'$ defines a one-parameter family of reduced models in $\Delta_n$ of degree $d$.
\end{proof}

The proof above is constructive and provides at least one such family. However, in practice, we can find other one-parameter families as the following example shows.

\begin{example}
We consider reduced models in $\Delta_4$ of degree four. If $\MM$ is the unique sharp model in $\Delta_2$, the construction in the proof of Proposition~\ref{prop:existence-one-parameter-family} yields a one-parameter family of reduced models with chip configuration as shown in Figure \ref{fig:pattern-(4,4)-infinite} (a). Analogously, the chip configuration in Figure \ref{fig:pattern-(4,4)-infinite} (b) leads to another family of reduced models in $\Delta_4$ of~degree~four.
\end{example}

\begin{figure}[htb]
\begin{minipage}{0.49\textwidth}
(a)\begin{center}
\begin{BVerbatim}
 .
 * .
 . . .
 . * . *
-1 . . * *
\end{BVerbatim}
\end{center}
\end{minipage}
\begin{minipage}{0.49\textwidth}
(b)\begin{center}
\begin{BVerbatim}
 .
 . *
 . . .
 * . * .
-1 * . . *
\end{BVerbatim}
\end{center}
\end{minipage}
\vspace{5pt}
\caption{Chip configurations for models in $\Delta_4$ of degree four that lead to infinitely many reduced models. A star indicates a non-zero entry.} \label{fig:pattern-(4,4)-infinite}
\end{figure}

Up to this point, many of our constructions are based on the binomial model. We conclude this section with another well-known model in $\Delta_n$ of degree $d=n$. Let $X$ be a discrete random variable with values in $\NN^+$ such that $\PP(X=k)=(1-p)^{k-1}p$. That is, $X$ describes the number of Bernoulli trials until the first success. We define the \emph{finite geometric model} to be the model given by the parameterization \begin{equation} \label{eq:geometric-model-parameterization} p : [0,1] \to \Delta_n, \quad t \mapsto (t, (1-t)t, (1-t)^2t, \ldots, (1-t)^{n-1}t,(1-t)^n).\end{equation} Indeed, the parameterization defines a fundamental model in $\Delta_n$ by the following lemma. 

\begin{lemma}
The parameterization \eqref{eq:geometric-model-parameterization} defines a fundamental model in $\Delta_n$ of degree $d=n$.
\end{lemma}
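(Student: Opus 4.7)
The plan is to verify each clause of the lemma — model, reducedness, correct degree, and fundamentality — in turn, leaning on the linear-independence criterion used in the proof of Lemma \ref{lemma:sharp-model-fundamental}.

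First I would check that $p$ actually lands in $\Delta_n$. Each coordinate is manifestly non-negative on $[0,1]$, and the components add to one by a geometric-series telescoping:
$$\sum_{k=0}^{n-1} t(1-t)^k + (1-t)^n \;=\; t\cdot\frac{1-(1-t)^n}{1-(1-t)} + (1-t)^n \;=\; \bigl(1-(1-t)^n\bigr) + (1-t)^n \;=\; 1.$$
Reducedness is then immediate: the $n+1$ exponent pairs are $(1,0),(1,1),\ldots,(1,n-1),(0,n)$, which are pairwise distinct and none equals $(0,0)$. The degree is $\max_i(\nu_i+\mu_i)=n$, attained for example at $(0,n)$.

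The substantive step is showing that the scalings are uniquely determined. By the criterion recalled in the proof of Lemma \ref{lemma:sharp-model-fundamental}, this amounts to proving that the polynomials $t,\, t(1-t),\, t(1-t)^2,\, \ldots,\, t(1-t)^{n-1},\, (1-t)^n$ are linearly independent in $\RR[t]$. Suppose $\sum_{k=0}^{n-1} c_k\, t(1-t)^k + c_n(1-t)^n = 0$ as polynomials. Evaluating at $t=1$ kills every summand except $c_0 t$, yielding $c_0=0$. Dividing the remaining identity by $(1-t)$, which is a factor of each surviving term, gives
$$\sum_{k=1}^{n-1} c_k\, t(1-t)^{k-1} + c_n(1-t)^{n-1} \;=\; 0.$$
Evaluating again at $t=1$ forces $c_1=0$, and iterating this evaluate-and-divide loop produces $c_0=c_1=\cdots=c_{n-1}=0$. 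The identity then reduces to $c_n(1-t)^n=0$, whence $c_n=0$. This establishes linear independence, and hence fundamentality.

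I do not expect any genuine obstacle here; the multiplicative structure $t(1-t)^k$ is designed for precisely this recursive argument. The only mild subtlety is the asymmetric role of the last coordinate $(1-t)^n$, which lacks the factor $t$, but the induction handles it uniformly because it is the unique term still present after all the $t$-divisible terms have been eliminated.
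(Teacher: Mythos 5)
Your proof is correct. The verification that the coordinates sum to one, that the model is reduced, and that the degree is $n$ matches the paper's; the only step with real content is the uniqueness of the scalings, and there your route differs from the paper's. The paper substitutes $s=1-t$, expands $c_0(1-s)+c_1s(1-s)+\cdots+c_{n-1}s^{n-1}(1-s)+c_ns^n$ in powers of $s$, and reads off the bidiagonal system $c_0=1$, $c_{i+1}-c_i=0$, which visibly has a unique solution. You instead invoke the linear-independence criterion quoted in the proof of Lemma \ref{lemma:sharp-model-fundamental} and run the evaluate-at-$t=1$-then-divide-by-$(1-t)$ induction on $t,\,t(1-t),\ldots,t(1-t)^{n-1},\,(1-t)^n$. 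The two arguments are equivalent in substance (uniqueness of the coefficient vector is exactly injectivity of the relevant linear map), but yours has the advantage of reusing the same mechanism as Lemma \ref{lemma:sharp-model-fundamental} verbatim, while the paper's is more self-contained and exhibits the solution $c_0=\cdots=c_n=1$ explicitly. One cosmetic point: after you have killed $c_0,\ldots,c_{n-1}$ by successive divisions, the surviving term is $c_n(1-t)$ rather than $c_n(1-t)^n$ (the latter is what you get if you instead substitute the vanishing coefficients back into the original identity); either reading yields $c_n=0$, so nothing is affected.
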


\begin{proof}
We verify that the components add up to one. For simplicity we set $s\coloneqq1-t$. Then $$(1-s)+s(1-s)+s^2(1-s)+\ldots+s^{n-1}(1-s)+s^n=1.$$ Since the exponent pairs are pairwise distinct, the parameterization defines a reduced model in $\Delta_n$. It remains to show that the coefficients are uniquely determined by $p_0+\ldots+p_n=1$. For this, we consider the expanded form of the following polynomial on the left-hand side: $$c_0(1-s)+c_1s(1-s)+c_2s^2(1-s)+\ldots+c_{n-1}s^{n-1}(1-s)+c_ns^n=1.$$ The constraint holds for all $s\in[0,1]$ when $c_0=1$ and $c_{i+1}-c_i=0$ for all $i\in[n-1]$.
\end{proof}

\section{Outlook} \label{sec:outlook}

The complexity of a discrete statistical model can be measured by its degree, ML degree, support size and the dimension of the associated variety. In this article, we focused on one-dimensional discrete models of ML degree one and investigated their complexity based on the degree and support size. An important open question concerns the generalization to higher-dimensional models of ML degree one. While the parameterization given in the following proposition yields higher-dimensional discrete models with rational ML estimator, it is open whether every discrete model of ML degree one has such a parameterization.

\begin{proposition} \label{prop:model-generalization}
Let $\MM$ be an $r$-dimensional model parameterized by $$p: [0,1]^r \to \Delta_n, \quad (t_1, \ldots, t_r) \mapsto (c_i t_1^{\nu_{1i}} \ldots t_r^{\nu_{ri}} (1-t_1-\ldots-t_r)^{\nu_{(r+1)i}})_{i=0}^n,$$ where $\nu_{\alpha i}$ are non-negative and $c_i$ are positive real scalings. Then $\MM$ has ML degree one.
\end{proposition}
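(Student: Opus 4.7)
The plan is to show that the ML estimator for $\MM$ is an explicit rational function of the data and invoke the classical equivalence mentioned in the introduction: ML degree one coincides with rationality of the ML estimator~\cite{DiscreteStatisticalModelsWithRationalMaximumLikelihoodEstimator, VarietiesWithMaximumLikelihoodDegreeOne}. The main computational trick will be the change of variables $s := 1 - t_1 - \cdots - t_r$, which turns every coordinate of $p$ into a monomial $p_i = c_i\, t_1^{\nu_{1i}} \cdots t_r^{\nu_{ri}}\, s^{\nu_{(r+1)i}}$ in the $r+1$ variables $(t_1,\dots,t_r,s)$ subject to the single affine relation $t_1 + \cdots + t_r + s = 1$.

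With this setup in place, for data $u = (u_0,\dots,u_n)$ the log-likelihood reads
\[
\ell(t) \;=\; \sum_{i=0}^n u_i \log c_i \;+\; \sum_{i=0}^n u_i\!\left(\sum_{k=1}^r \nu_{ki}\log t_k + \nu_{(r+1)i}\log s\right),
\]
and using $\partial s/\partial t_k = -1$ the score equations $\partial \ell/\partial t_k = 0$ reduce to
\[
\frac{N_k(u)}{t_k} \;=\; \frac{N_{r+1}(u)}{s}, \qquad k = 1,\dots, r,
\]
where $N_k(u) := \sum_{i=0}^n u_i \nu_{ki}$ is a linear form in $u$ for each $k\in\{1,\dots,r+1\}$. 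Combined with the constraint $s + t_1 + \cdots + t_r = 1$, this is a linear system whose unique solution is
\[
\hat t_k \;=\; \frac{N_k(u)}{N_1(u) + \cdots + N_{r+1}(u)}, \qquad \hat s \;=\; \frac{N_{r+1}(u)}{N_1(u) + \cdots + N_{r+1}(u)},
\]
yielding a single rational critical point $\hat p = p(\hat t)$ in $\MM$, which is therefore the ML estimator.

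I do not foresee any serious obstacle; the argument is a higher-dimensional analogue of the standard Birch-type rationality for discrete exponential families, and it specializes to the introductory example when $r = 1$. The only details needing verification are that for generic $u$ (a Zariski-open condition) each linear form $N_k(u)$ and the total $N_1(u) + \cdots + N_{r+1}(u)$ is nonzero, and that no additional complex critical points appear via the parameterization. Both follow from the fact that the score equations are equivalent to the displayed linear system, which has a unique complex solution. Once the ML estimator is known to be rational in $u$, the equivalence cited at the start delivers $\MM$ has ML degree one.
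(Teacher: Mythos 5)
Your proof is correct and follows essentially the same route as the paper: both reduce the score equations $\partial\ell/\partial t_\alpha=0$ to a linear system in $(t_1,\dots,t_r)$ after clearing the denominators $t_\alpha$ and $1-t_1-\cdots-t_r$, and conclude from its unique solution that the ML degree is one. Your version is slightly cleaner in that the substitution $s=1-t_1-\cdots-t_r$ handles the zero-exponent cases uniformly (the paper treats them by explicit case analysis) and yields the explicit Birch-type formula $\hat t_k = N_k(u)/\sum_j N_j(u)$, but the underlying argument is identical.
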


\begin{proof}
Let $u \in \CC^n$ be generic. The ML degree is the number of complex critical points of $$\ell_u(t_1, \ldots, t_r) = \sum_{i=0}^n u_i \log(c_i t_1^{\nu_{1i}} \ldots t_r^{\nu_{ri}} (1-t_1-\ldots-t_r)^{\nu_{(r+1)i}}).$$ Considering the partial derivatives yields a linear system which has a unique solution for generic $u$. More precisely, if $\nu_{\alpha i}\neq 0$ and $\nu_{(r+1)i} \neq 0$, the $i$-th summand of $\partial_{t_\alpha} \ell_u(t_1, \ldots, t_r)$~is $$\frac{u_i (\nu_{(r+1)i}t_\alpha+\nu_{\alpha i}(t_1+\ldots+t_r-1))}{t_\alpha(t_1+\ldots+t_r-1)}.$$ If $\nu_{\alpha i} = 0$ and $\nu_{(r+1)i} \neq 0$, the $i$-th summand of $\partial_{t_\alpha} \ell_u(t_1, \ldots, t_r)$ is $$\frac{u_i \nu_{(r+1)i}}{(t_1+\ldots+t_r-1)} = \frac{u_i \nu_{(r+1)i}t_\alpha}{t_\alpha(t_1+\ldots+t_r-1)}.$$ If $\nu_{\alpha i} \neq 0$ and $\nu_{(r+1)i} = 0$, the $i$-th summand of $\partial_{t_\alpha} \ell_u(t_1, \ldots, t_r)$ is $$\frac{u_i \nu_{\alpha i}}{t_\alpha} = \frac{u_i \nu_{\alpha i}(t_1+\ldots+t_r-1)}{t_\alpha(t_1+\ldots+t_r-1)}.$$ If $\nu_{\alpha i}=0$ and $\nu_{(r+1)i} = 0$, the $i$th summand of the partial derivative $\partial_{t_\alpha} \ell_u(t_1, \ldots, t_r)$ is zero. If we do this for all partial derivatives and multiply each equation $\partial_{t_\alpha} \ell_u(t_1, \ldots, t_r)=0$ by $t_\alpha(t_1+\ldots+t_r-1)$, we get a linear system in $t_1, \ldots, t_r$. Hence, the ML degree of the model is one. 
\end{proof}

Analogous to the one-dimensional case, reduced models can also be considered for higher-dimensional models that are parameterized as in Proposition \ref{prop:model-generalization}. Regarding these models, a natural question that arises in view of Theorem \ref{thm:degree-upper-bound} is the following, see also \cite[Problem~1]{DegreeEstimatesForPolynomialsConstantOnAHyperplane}.

\begin{question}
Let $r\ge2$. For $r$-dimensional reduced models $\MM\subseteq\Delta_n$ parameterized as in Proposition \ref{prop:model-generalization} does there exist a sharp upper bound for $\deg(\MM)$ in terms of $n$ and $r$?
\end{question}

In the one-dimensional case, we took advantage of the fact that the degree of the model coincides with the total degree of the associated polynomial. Unfortunately, this is no longer guaranteed for higher-dimensional models. The strong connection between reduced models and CR geometry only allows the specification of an upper bound on the total degree of the associated polynomials, similar to the one in Theorem \ref{thm:degree-upper-bound}. This bound can be translated from work by Lebl and Peters into the algebraic statistics setting.

\begin{corollary}[{of \cite[Theorem 1.1]{PolynomialsConstantOnAHyperplaneAndCRMapsOfSpheres}}]
Let $r\ge2$ and let $\MM\subseteq\Delta_n$ be an $r$-dimensional reduced model parameterized as in Proposition \ref{prop:model-generalization}. The following inequality holds and is~sharp: $$\max\{\nu_{1i}+\ldots+\nu_{(r+1)i} \mid i\in [n] \}\le\frac{n}{r}.$$
\end{corollary}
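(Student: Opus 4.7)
The plan is to reduce the statement to a direct application of \cite[Theorem 1.1]{PolynomialsConstantOnAHyperplaneAndCRMapsOfSpheres} via the natural higher-dimensional analogue of the dictionary used throughout Section \ref{sec:labeled-newton-diagrams}. First I would introduce new indeterminates $x_j \coloneqq t_j$ for $j=1,\ldots,r$ together with $x_{r+1} \coloneqq 1-t_1-\ldots-t_r$, and form the associated polynomial
$$f_\MM(x_1,\ldots,x_{r+1}) \coloneqq \sum_{i=0}^n c_i\, x_1^{\nu_{1i}}\cdots x_{r+1}^{\nu_{(r+1)i}} \in \RR[x_1,\ldots,x_{r+1}].$$
The crucial observation is that the total degree of $f_\MM$ is exactly $\max\{\nu_{1i}+\ldots+\nu_{(r+1)i}\mid i\in[n]\}$, so any bound on $\deg(f_\MM)$ translates verbatim into the claimed inequality.

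Next I would verify the hypotheses required to invoke the Lebl--Peters bound. The polynomial $f_\MM$ has $n+1$ pairwise distinct monomials because $\MM$ is reduced; all of its coefficients are positive because the scalings $c_i$ are positive; and $f_\MM(x_1,\ldots,x_{r+1})=1$ on the hyperplane $x_1+\ldots+x_{r+1}=1$ because the coordinates of $p$ sum to one identically on the parameter domain, and the image of that domain under the substitution is Zariski-dense in the hyperplane. This places $f_\MM$ precisely in the class of polynomials treated in \cite{PolynomialsConstantOnAHyperplaneAndCRMapsOfSpheres}, and applying \cite[Theorem~1.1]{PolynomialsConstantOnAHyperplaneAndCRMapsOfSpheres} directly delivers $\deg(f_\MM)\le n/r$.

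For sharpness, I would translate an extremal example from the CR side back to the statistical setting via the inverse substitution. The hard part is exactly this reverse direction: the upper bound is essentially immediate once the dictionary is set up, but one must confirm that the sharp polynomials constructed by Lebl and Peters genuinely yield well-defined reduced elements of $\Delta_n$, in the sense that the substitution does not cause monomial collisions or force a scaling to vanish. Unlike the one-dimensional case, where the extremal configurations are classified combinatorially via Newton diagrams as in Lemma \ref{lemma:homogeneous-polynomial} and Proposition \ref{prop:sinks-lower-bound}, for $r\ge 2$ no such classification is available, so sharpness must be extracted from inspecting a specific family in \cite{PolynomialsConstantOnAHyperplaneAndCRMapsOfSpheres} and checking by hand that its exponent vectors and coefficients satisfy the \emph{reducedness} conditions of Definition \ref{def:fundamental-model}.
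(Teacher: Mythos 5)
Your proposal is correct and follows essentially the same route as the paper: the paper treats this as an immediate corollary of the Lebl--Peters theorem via exactly the substitution $x_j=t_j$, $x_{r+1}=1-t_1-\cdots-t_r$, which produces a polynomial with $n+1$ distinct monomials, nonnegative coefficients, identically one on the hyperplane, so that the bound $d\le\frac{(n+1)-1}{(r+1)-1}=\frac{n}{r}$ drops out. The sharpness step you flag as the ``hard part'' is also handled by citation rather than by a reverse construction: the paper points to known explicit families of polynomials in $r+1$ variables with $n+1$ terms, nonnegative coefficients, constant on the hyperplane and satisfying $rd=n$, and these directly define reduced models without any collision issues.
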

 
Examples of polynomials of degree $d$ in $r+1$ variables with $n+1$ different terms of non-negative coefficients that are constant on $x_1+\ldots+x_{r+1}=1$ and for which $rd= n$ are known \cite[Example 4]{DegreeEstimatesForPolynomialsConstantOnAHyperplane}. A natural research direction is to bring the work of \cite{DegreeEstimatesForPolynomialsConstantOnAHyperplane, PolynomialsConstantOnAHyperplaneAndCRMapsOfSpheres} to the algebraic statistics context, thus making further progress using tools from CR geometry.

\section*{Acknowledgments}
We thank John P. D'Angelo for his feedback and helpful comments, as well as an anonymous referee, who suggested how to simplify the proof of Proposition~\ref{prop:factor}.

\bibliographystyle{alpha}
\bibliography{bib.bib}

\vspace{0.25cm}

\noindent{\bf Authors' addresses:}
\smallskip
\small 

\noindent Carlos Am\'{e}ndola,
Technische Universit\"at Berlin
\hfill {\tt amendola@math.tu-berlin.de}

\noindent Viet Duc Nguyen,
Technische Universit\"at Berlin
\hfill {\tt vduc@fastmail.com}

\noindent Janike Oldekop,
Technische Universit\"at Berlin
\hfill {\tt oldekop@math.tu-berlin.de}

\end{document}